\renewcommand{\baselinestretch}{1.10}
\newtheorem{proposition}{Proposition}[section]
\newtheorem{theorem}[proposition]{Theorem}
\newtheorem{lemma}[proposition]{Lemma}
\newtheorem{corollary}[proposition]{Corollary}
\theoremstyle{definition}
\theoremstyle{remark}
\newtheorem{remark}[proposition]{Remark}
\theoremstyle{definition}
\numberwithin{equation}{section}
\newcommand{\N}{\mathbb{N}}
\newcommand{\R}{\mathbb{R}}
\title[Thermostats without conjugate points]{Thermostats without conjugate points}
\author[J.~Echevarría Cuesta]{Javier Echevarría Cuesta}
\author[J.~Marshall Reber]{James Marshall Reber}
\address{Department of Pure Mathematics and Mathematical Statistics, University of Cambridge, Cambridge CB3 0WB, UK}
\email{je396@cam.ac.uk}
\address{Department of Mathematics, University of Chicago, Chicago, IL 60637, USA}
\email{jmarshallreber@uchicago.edu}
\begin{document}

\begin{abstract}
{We generalize Hopf’s theorem to thermostats: the total thermostat curvature of
a thermostat without conjugate points is non-positive and vanishes only if the thermostat
curvature is identically zero. We further show that, if the thermostat curvature is zero, then
the flow has no conjugate points and the Green bundles collapse almost everywhere. Given
a thermostat without conjugate points, we prove that the Green bundles are transverse
everywhere if and only if it is projectively Anosov. Finally, we provide an example showing
that Hopf’s rigidity theorem on the $2$-torus cannot be extended to thermostats. It is also the
first example of a projectively Anosov thermostat which is not Anosov.}
\end{abstract}

\maketitle

\section{Introduction}

Thermostats model the motion of a particle moving on a surface under the influence of
a force acting orthogonally to the velocity. Unlike the special case of magnetic flows,
these systems allow the force to depend on the particle’s velocity, yielding examples
of dissipative flows that still preserve the initial kinetic energy. As such, they provide
interesting models in non-equilibrium statistical mechanics, as studied by Gallavotti and
Ruelle in \cite{gallavotti97, gallavotti99, ruelle99}.

Concretely, let $(M,g)$ be a closed oriented Riemannian surface and let $\lambda \in \mathcal{C}^\infty(SM, \R)$ be a smooth function on the unit tangent bundle $\pi : SM\to M$. A curve $\gamma : \R \rightarrow M$ is a \emph{thermostat geodesic} if it satisfies the second-order differential equation
\begin{equation}
    \label{eq:defn_generalized_therm_geo} \nabla_{\dot{\gamma}} \dot{\gamma} = \lambda(\gamma, \dot{\gamma}) J\dot{\gamma},
\end{equation}
where $\nabla$ is the Levi-Civita connection associated to $g$ and $J : TM \rightarrow TM$ is the complex structure on $M$ induced by the orientation, that is, rotation by $\pi/2$ according to the orientation of the surface. Since the speed of $\gamma$ remains constant, this equation determines a flow on $SM$ given by $\varphi_t(\gamma(0), \dot{\gamma}(0)) \coloneqq (\gamma(t), \dot{\gamma}(t))$. Its infinitesimal generator is $F \coloneqq X + \lambda V$, where $X$ is the geodesic vector field and $V$ is the vertical vector field (see \cite[Lemma 7.4]{merry11}). The triple $(M,g,\lambda)$ is called a \emph{thermostat}.

The degree of freedom that comes from the choice of $\lambda$ enables thermostats to encode
a wide range of dynamical systems. Moreover, the specific dependence of $\lambda$ on velocity
can have drastic effects on key dynamical properties of the flow such as the following.


\begin{itemize}
\item Geodesic flows ($\lambda=0$). These are contact, volume-preserving, and reversible in the sense that the flip map $(x,v)\mapsto (x,-v)$ on $SM$ conjugates $\varphi_t$ with $\varphi_{-t}$. 
\item Magnetic flows ($\lambda$ depends only on position). These are still volume-preserving, but become irreversible if $\lambda \neq 0$. Since \eqref{eq:defn_generalized_therm_geo} is no longer homogeneous, the dynamics can change drastically based on the kinetic energy. Indeed, magnetic geodesics of different speeds are not just reparameterizations of unit-speed magnetic geodesics, leading to the rich theory of Mañé's critical values \cite{mane97, contreras97, burns02, contreras04, cieliebak10}.
\item Gaussian thermostats ($\lambda$ depends linearly on velocity). These are reversible, but may not preserve any absolutely continuous measure \cite{dairbekov07}. Originally introduced in non-equilibrium statistical mechanics (sometimes under the label of \textit{isokinetic dynamics}) based on Gauss's principle of least constraint \cite{hoover86, gallavotti97, gallavotti99, ruelle99}, they were later recognized as specific reparameterizations of geodesic flows of Weyl connections \cite{wojtkowski00b, wojtkowski02} and, subsequently, as the geodesic flows of metric connections, including those with non-zero torsion \cite{przytycki08} (see also \cite[Lemma 7.12]{merry11}).
\item Quasi-Fuchsian flows ($\lambda$ is the real part of a holomorphic quadratic differential). When these are Anosov, the weak stable and unstable subbundles are smooth \cite{ghys92, paternain07, cekic26}, yet they are not volume-preserving for $\lambda\neq 0$, so they cannot be algebraic. Any Anosov flow on a closed 3-dimensional manifold with smooth weak stable and unstable subbundles is smoothly orbit equivalent to either the suspension of a diﬀeomorphism of the $2$-torus or, up to finite covers, a quasi-Fuchsian flow  \cite[Theorem 4.6]{ghys93}.
\item Projective flows ($\lambda$ has linear and cubic velocity terms). These are the geodesic flows of torsion-free affine connections, up to reparameterization \cite{labourie07, mettler20}.
\item Coupled vortex equations ($\lambda$ is the real part of a holomorphic differential of degree $m\geq 2$). As explained in \cite[\S5]{dumas15}, these systems are a variation of the Abelian vortex equations on Riemann surfaces from gauge theory. These equations were introduced in the Ginzburg--Landau model for superconductors \cite{ginzburg50} before being generalized and extensively studied in relation to Yang--Mills--Higgs theory \cite{jaffe80, noguchi87, bradlow91, garcia-prada94, witten07}. For $m=3$, we obtain Wang's equation, which arises in the study of aﬃne spheres \cite{wang91}. 
\end{itemize}

As in the Riemannian setting, we define the exponential map $\exp^\lambda:TM\to M$ by
\begin{equation}\label{eq:exponential-map}
    \exp_x^\lambda(t v)\coloneqq\pi(\varphi_t(x,v)), \qquad x\in M, \, t\geq 0,\, v\in S_xM.
\end{equation}
For every $x\in M$, the map $\exp_x^\lambda$ is $\mathcal{C}^\infty$ on $T_xM\setminus \{0\}$ but, in general, only $\mathcal{C}^1$ at $0$; see, for instance, the proof of \cite[Lemma A.7]{dairbekov07b}. The lack of smoothness at the origin reflects the potential non-reversibility of thermostat flows. We say that the thermostat in question has \emph{no conjugate points} if $\exp_x^\lambda$ is a local diffeomorphism for all $x\in M$. Given a thermostat geodesic segment $\gamma: [0,T]\to M$ with distinct endpoints $x_0 \coloneqq \gamma(0)$ and $x_1\coloneqq\gamma(T)$, we say that $x_0$ and $x_1$ are \textit{conjugate along} $\gamma$ if the map $\exp_{x_0}^\lambda$ is singular at $T\dot{\gamma}(0)$, that is, if the differential $d_{T\dot{\gamma}(0)}\exp_{x_0}^\lambda$ has a non-trivial kernel. The goal of this paper is to explain how the no-conjugate-points condition relates to different notions of curvature, as well as characterize the dynamics of such thermostats. In doing so, we highlight both the concepts that generalize perfectly from the geodesic case and the nuances that appear with greater dynamical complexity. This exercise not only sheds new light on thermostats, but also gives new results for geodesic flows.


\subsection{Conjugate points and thermostat curvature} 

Let $K_g\in \mathcal{C}^\infty(M, \R)$ denote the Gaussian curvature of $(M,g)$. In the geodesic case, it is easy to check that $K_g \leq 0$ implies $g$ has no conjugate points. The quantity that usually plays the role of Gaussian curvature for thermostats is the \textit{thermostat curvature} $\mathbb{K}\in \mathcal{C}^\infty(M, \R)$ given by
$$\mathbb{K}\coloneqq\pi^*K_g - H\lambda+\lambda^2+FV\lambda,$$
where $H\coloneqq[V,X]$. This is a smooth function on $SM$ instead of $M$. 
It turns out that this notion of thermostat curvature is implicitly making a choice of a gauge (see \S \ref{subsection:characteristic}). Given a function $p: SM\to \R$ that is $\mathcal{C}^1$ along the flow, define
\begin{equation}\label{eq:definition-of-kappa-p}
    \kappa_p \coloneqq \pi^*K_g -H\lambda +\lambda^2 + Fp+p(p-V\lambda). 
\end{equation}
Observe that $\mathbb{K}$ corresponds to the special case $p=V\lambda$. The quantity $\kappa_p$ was explicitly  introduced in \cite{mettler19} as a tool for analyzing the dynamics of thermostat flows, but it already appears implicitly in \cite{jane09}.

Our first result shows that the notion of curvature $\kappa_p$ offers a useful criterion to check whether a thermostat has no conjugate points.

\begin{theorem}\label{theorem:non-positive-implies-no-conjugate}
Let $(M, g, \lambda)$ be a thermostat. If $\kappa_p\leq 0$ for some function $p:SM\to \R$ that is $\mathcal{C}^1$ along the flow, then there are no conjugate points.
\end{theorem}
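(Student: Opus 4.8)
The plan is to reduce the no-conjugate-points condition to a Riccati-type comparison argument along each thermostat geodesic, exactly as one does for geodesic flows, but with $\kappa_p$ playing the role of the Gaussian curvature. First I would recall the structure of the Jacobi/perpendicular variation equation for the flow $F = X+\lambda V$: a variation field along a thermostat geodesic $\gamma$ decomposes via the $\{X, V, H\}$ frame, and its normal component $y(t)$ satisfies a scalar second-order ODE of the form $\ddot y + (\text{first-order term in }\lambda)\,\dot y + (\text{curvature term})\,y = 0$. Conjugate points correspond to nontrivial solutions of this equation with two zeros. The key algebraic observation, due to the gauge freedom encoded in \eqref{eq:definition-of-kappa-p}, is that the substitution $y = e^{-\int_0^t p\circ\varphi_s\,\mathrm{d}s}\,u$ (or a similar integrating-factor change of variables using the function $p$) transforms the Jacobi equation into the clean form $\ddot u + \kappa_p(\varphi_t(x,v))\,u = 0$. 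This is the step where the precise bookkeeping matters, and it is essentially a computation already implicit in \cite{mettler2019holomorphic}: one must check that the cross term and the zeroth-order term conspire so that $F(p) + p(p - V(\lambda))$ is exactly the correction needed to pass from $\mathbb{K}$-type curvature to $\kappa_p$.

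Once the Jacobi equation is in the form $\ddot u + \kappa_p\, u = 0$ with $\kappa_p \le 0$ along every orbit, the conclusion is the classical Hopf/Sturm argument: if $u$ is a solution with $u(0) = 0$ and $u(t_0)=0$ for some $t_0 > 0$, consider the Riccati quantity $r = \dot u / u$ on $(0, t_0)$. It satisfies $\dot r + r^2 + \kappa_p = 0$, hence $\dot r = -r^2 - \kappa_p \ge -r^2$, and since $u$ vanishes at both ends $r \to +\infty$ as $t\downarrow 0$ and $r\to -\infty$ as $t\uparrow t_0$; but $\dot r \ge -r^2$ forbids a solution from running off to $-\infty$ in finite time once it is finite, contradiction. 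Equivalently, one can argue directly with the energy/Wronskian: compare $u$ with the solution of $\ddot v = 0$, or integrate $u\ddot u = -\kappa_p u^2 \ge 0$ by parts on $[0,t_0]$ to get $\int_0^{t_0}\dot u^2 = -\int_0^{t_0}\kappa_p u^2 \ge 0$ is impossible unless $u\equiv 0$ when combined with the boundary conditions — actually cleaner: $\frac{d}{dt}(u\dot u) = \dot u^2 + u\ddot u = \dot u^2 - \kappa_p u^2 \ge 0$, so $u\dot u$ is nondecreasing; it is $0$ at both endpoints, hence $\equiv 0$ on $[0,t_0]$, forcing $u\equiv 0$.

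Finally I would translate this back: "no conjugate points" as defined via $\exp_x^\lambda$ being a local diffeomorphism on $T_xM$ is equivalent to the nonvanishing of the relevant Jacobi field (the derivative of $t\mapsto \pi(\varphi_t(x,v))$ in the $v$-directions), so the ODE argument gives the theorem. Two points require a little care and I expect them to be the main obstacles. The first is purely at $t = 0$: $\exp_x^\lambda$ is only $\mathcal{C}^1$ there, so one should phrase the conjugate-point condition in terms of the flow $\varphi_t$ on $SM$ and the linearized flow $d\varphi_t$ (which is smooth), rather than differentiating $\exp_x^\lambda$ directly at the origin; this is standard and I would cite the setup from \cite{uhlmann2007, mettler2019holomorphic}. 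The second, and the genuinely substantive step, is the integrating-factor reduction of the Jacobi equation to $\ddot u + \kappa_p u = 0$: one must verify that $p$ being merely differentiable along the flow (not smooth on all of $SM$) is enough for this change of variables to be legitimate along each fixed orbit, which it is, since everything is happening in the $t$-variable along $\varphi_t(x,v)$.
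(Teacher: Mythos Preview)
Your overall strategy is sound, and the monotone-quantity argument at the end is in fact precisely what the paper uses. But there is a genuine gap at the reduction step. You assert that the substitution $y = e^{-\int_0^t p\circ\varphi_s\,ds}\,u$ brings the Jacobi equation $\ddot y + V(\lambda)\dot y + \mathbb K y = 0$ into the form $\ddot u + \kappa_p u = 0$; this is false for general $p$. A multiplicative change $y = e^{-\int q}\,u$ produces
\[
\ddot u + \bigl(V(\lambda) - 2q\bigr)\dot u + \bigl(q^2 - F(q) - qV(\lambda) + \mathbb K\bigr)u = 0,
\]
so the first-order term vanishes only for $q = V(\lambda)/2$, and then the zeroth-order coefficient is $\tilde\kappa = \kappa_{V(\lambda)/2}$, not $\kappa_p$. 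There is simply no integrating factor that yields $\ddot u + \kappa_p u = 0$ for an arbitrary gauge $p$; the ``conspiracy'' you allude to does not occur.

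The paper repairs this as follows. It does perform the $q = V(\lambda)/2$ reduction to get $\ddot z + \tilde\kappa z = 0$, but then, rather than using $z\dot z$, it uses the shifted monotone quantity
\[
w \;=\; \bigl(\dot z - (p - \tfrac{1}{2}V(\lambda))z\bigr)\,z.
\]
A direct computation gives $\dot w = \bigl(\dot z - (p - \tfrac12 V(\lambda))z\bigr)^2 - \kappa_p z^2$, which is exactly how the general $\kappa_p$ enters. From there your convexity argument goes through verbatim: $\kappa_p\le 0$ makes $w$ nondecreasing, $w$ vanishes at two consecutive zeros of $z$, hence $w\equiv 0$ on that interval, so $z$ solves a first-order linear ODE with zero boundary data and is identically zero. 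In short, the missing idea is that the gauge $p$ enters not through the integrating factor but through the choice of monotone quantity; your proof is correct only for $p = V(\lambda)/2$.
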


In fact, the additional degree of freedom that comes from the gauge $p$ allows us to completely characterize thermostats without conjugate points. This characterization is also new for geodesic flows. 

\begin{theorem}\label{theorem:no-conjugate-points-characterization}
A thermostat $(M, g, \lambda)$ has no conjugate points if and only if there exists a Borel measurable function $p: SM\to \R$ smooth along the flow with $\kappa_p=0$.
\end{theorem}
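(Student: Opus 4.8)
The plan is to exploit the Riccati equation governing Jacobi fields along the thermostat flow. Writing a (scalarized) Jacobi field as $y$ with $\dot y$ its derivative along $F$, the thermostat version of the Jacobi equation takes the schematic form $\ddot y + \kappa_p\, y = 0$ once one passes to the gauge $p$: the quantity $\kappa_p$ is precisely the coefficient playing the role of curvature in the second-order ODE obtained after conjugating the Jacobi equation by a factor built from $p$ (this is the content of the gauge discussion in \S\ref{subsection:characteristic} and the reason $\kappa_p$ was introduced in \cite{mettler2019holomorphic}). Setting $u := \dot y / y$ along a trajectory where $y$ does not vanish, one gets the Riccati equation $\dot u + u^2 + \kappa_p = 0$. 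The strategy is then: (i) for the ``if'' direction, observe that if $\kappa_p \equiv 0$ then the Jacobi equation becomes $\ddot y = 0$, so nontrivial Jacobi fields vanishing at one time are linear and vanish at no other time, giving no conjugate points (alternatively, invoke Theorem~\ref{theorem:non-positive-implies-no-conjugate} directly with this $p$, since $\kappa_p = 0 \leq 0$); (ii) for the ``only if'' direction, build the desired $p$ from the dynamically defined Green solutions.

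For the forward implication, assuming no conjugate points, I would construct the stable and unstable Green bundles in the usual way: for each $(x,v)\in SM$ and each $T>0$, let $y_T$ be the Jacobi field (in the current gauge, say $p_0 = V(\lambda)$, i.e.\ working with $\mathbb{K}$) with $y_T(0)=1$ and $y_T(-T)=0$; the no-conjugate-points hypothesis guarantees $y_T$ is well-defined and nonvanishing on $(-T,0]$, and a Sturm-type comparison shows the slopes $u_T^s(x,v) := \dot y_T(0)/y_T(0)$ converge as $T\to\infty$ to a limit $u^s(x,v)$, the stable Green slope (similarly $u^u$ using $y_T(T)=0$). These limiting slopes satisfy the Riccati equation $F(u^s) + (u^s)^2 + \mathbb{K} = 0$ wherever defined, they are invariant along the flow in the appropriate sense, and — crucially — they are Borel measurable (as monotone limits of continuous functions) and smooth along the flow (being solutions of an ODE along each orbit with measurable-but-orbitwise-smooth data). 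Then one sets $p := u^s + V(\lambda)$ (the shift that converts the $\mathbb{K}$-Riccati equation into the statement $\kappa_p = 0$): indeed, expanding $\kappa_p$ using \eqref{eq:definition-of-kappa-p} with $p = q + V(\lambda)$ gives $\kappa_p = \mathbb{K} + F(q) + q^2 + 2q\,V(\lambda) - q\,V(\lambda) - \dots$ — here I would carefully match terms so that the choice $q = u^s$ makes $\kappa_p = F(u^s) + (u^s)^2 + \mathbb{K} = 0$. The algebraic bookkeeping in this last identity, together with verifying that $F(p)$ makes sense for the merely-Borel $p$ (it does, since $p$ is smooth along orbits), is the routine part.

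The main obstacle is the regularity and well-definedness of the Green slope $u^s$ on \emph{all} of $SM$: a priori the Green bundles are only defined and the comparison argument only works on the orbit through each point, and one must check that the pointwise limit exists everywhere (not just on a full-measure set), that it is genuinely Borel measurable on $SM$, and that it is differentiable along $F$ with the Riccati equation holding pointwise. In the geodesic setting this is classical (Eberlein, Green), but for thermostats the Jacobi equation has the extra first-order term coming from $V(\lambda)$ and the flow is not volume-preserving in general, so the monotonicity/boundedness estimates underlying convergence of $u_T^s$ need the no-conjugate-points hypothesis used carefully — this is exactly where the gauge freedom pays off, since working with a well-chosen $p$ (rather than insisting on $p = V(\lambda)$) is what lets the construction close up into the clean statement $\kappa_p = 0$. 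I would handle measurability by writing $u^s = \lim_{T\to\infty} u_T^s$ with each $u_T^s$ continuous on $SM$ and the limit monotone in $T$ (after possibly a fixed reparametrization), so $u^s$ is a pointwise limit of continuous functions, hence Borel.
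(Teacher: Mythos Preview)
Your proposal is correct and follows essentially the same route as the paper: the converse direction constructs the Green bundles, extracts the Riccati solution $r$ (your $-u^s$) in the gauge $p=V(\lambda)$ via Lemma~\ref{lemma:riccati-equation}, and then sets the new $p:=V(\lambda)-r$, which is exactly your $p=u^s+V(\lambda)$. Your schematic Riccati equation $\dot u + u^2 + \mathbb{K}=0$ is missing the first-order term $V(\lambda)u$ coming from \eqref{eq:cotangent-jacobi}, but you correctly flag this for careful bookkeeping, and the algebra closes up just as you anticipate.
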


Next, we give a generalization of Hopf's rigidity result in \cite{hopf48} to thermostats. Note that $\mu$ denotes the Liouville form on $SM$.

\begin{theorem}\label{theorem:hopf}
    Let $(M, g, \lambda)$ be a thermostat without conjugate points. For any Borel measurable function $p: SM\to \R$ that is $\mathcal{C}^1$ along the flow, we have
    \begin{equation}\label{eq:hopf-inequality}
        \int_{SM}\kappa_p \mu\leq \int_{SM}(p-V\lambda)^2 \mu.
    \end{equation}
    Moreover, if equality holds, then $\mathbb{K}=0$. 
\end{theorem}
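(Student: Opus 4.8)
The plan is to reduce \eqref{eq:hopf-inequality} to the single inequality $\int_{SM}\mathbb{K}\,\mu\le 0$ by means of an integration-by-parts identity, and then to obtain that inequality from Theorem~\ref{theorem:no-conjugate-points-characterization}. Set $w:=p-V(\lambda)$. We may assume $w\in L^2(SM,\mu)$ and $\kappa_p\in L^1(SM,\mu)$, since otherwise the right-hand side of \eqref{eq:hopf-inequality} is infinite, resp.\ the integrals are not defined, and there is nothing to prove.

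First I would record the identity. Comparing \eqref{eq:definition-of-kappa-p} for the gauges $p$ and $V(\lambda)$ and using $\mathbb{K}=\kappa_{V(\lambda)}$, one gets the purely algebraic relation $\kappa_p-\mathbb{K}=F(w)+w^2+wV(\lambda)$. On the other hand, since both the geodesic flow and the vertical flow preserve the Liouville form, $\Lie_F\mu=\Lie_{\lambda V}\mu=V(\lambda)\,\mu$, i.e.\ $\div_\mu F=V(\lambda)$, and consequently $\int_{SM}F(w)\,\mu=-\int_{SM}wV(\lambda)\,\mu$. Integrating the algebraic relation against $\mu$ and substituting yields
\begin{equation}\label{eq:hopf-identity}
\int_{SM}\kappa_p\,\mu=\int_{SM}\mathbb{K}\,\mu+\int_{SM}(p-V(\lambda))^2\,\mu .
\end{equation}
Thus \eqref{eq:hopf-inequality} holds for every admissible $p$ once we know $\int_{SM}\mathbb{K}\,\mu\le 0$, and equality in \eqref{eq:hopf-inequality} amounts to $\int_{SM}\mathbb{K}\,\mu=0$.

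To obtain $\int_{SM}\mathbb{K}\,\mu\le 0$ together with the rigidity, I would invoke Theorem~\ref{theorem:no-conjugate-points-characterization}: since there are no conjugate points, it furnishes a Borel function $p_0:SM\to\R$, smooth along the flow, with $\kappa_{p_0}\equiv 0$, and $p_0$ may be taken bounded (it comes from a Green/Riccati solution, whose sup-norm is controlled by $\|\mathbb{K}\|_\infty$ and $\|V(\lambda)\|_\infty$ just as in the geodesic case), so $p_0-V(\lambda)\in L^2$. Applying \eqref{eq:hopf-identity} to $p_0$ gives $0=\int_{SM}\mathbb{K}\,\mu+\int_{SM}(p_0-V(\lambda))^2\,\mu$, so $\int_{SM}\mathbb{K}\,\mu=-\int_{SM}(p_0-V(\lambda))^2\,\mu\le 0$, which is \eqref{eq:hopf-inequality} through \eqref{eq:hopf-identity}. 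If equality holds, then $\int_{SM}\mathbb{K}\,\mu=0$, hence $\int_{SM}(p_0-V(\lambda))^2\,\mu=0$, so $p_0=V(\lambda)$ $\mu$-a.e.; since $p_0$ and $V(\lambda)$ are continuous along the flow, a Fubini argument along the flow shows $p_0\equiv V(\lambda)$ on $\mu$-a.e.\ orbit, whence $F(p_0)=F(V(\lambda))$ there and $0=\kappa_{p_0}=\kappa_{V(\lambda)}=\mathbb{K}$ on a flow-invariant set of full $\mu$-measure. As $\mathbb{K}$ is continuous and $\mu$ has full support, $\mathbb{K}\equiv 0$.

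The step requiring the most care is the integration by parts $\int_{SM}F(w)\,\mu=-\int_{SM}wV(\lambda)\,\mu$ for a function $w$ that is only differentiable along the flow: I would prove it by writing $(\varphi_t)_*\mu=h_t\,\mu$ with $h_t=\exp\bigl(-\int_0^t V(\lambda)\circ\varphi_{-s}\,\dd s\bigr)$, observing that $\int_{SM}(w\circ\varphi_t)\,\mu=\int_{SM}w\,h_t\,\mu$, and differentiating at $t=0$ — the left side has derivative $\int_{SM}F(w)\,\mu$ (here $\kappa_p\in L^1$ lets one rewrite the difference quotient, via Fubini, as an average of $s\mapsto\int_{SM}(F(w)\circ\varphi_s)\,\mu$), while the right side has derivative $\int_{SM}w\,(\partial_t h_t)|_{t=0}\,\mu=-\int_{SM}wV(\lambda)\,\mu$. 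The only place where the no-conjugate-points hypothesis is genuinely used is the existence of the bounded gauge $p_0$; alternatively one may avoid Theorem~\ref{theorem:no-conjugate-points-characterization} and run Hopf's Riccati argument directly, with a bounded measurable solution $r$ of $F(r)+r^2+V(\lambda)r+\mathbb{K}=0$ coming from a Green subbundle, noting that $\kappa_{r+V(\lambda)}\equiv 0$ and hence $\int_{SM}\mathbb{K}\,\mu=-\int_{SM}r^2\,\mu$ by \eqref{eq:hopf-identity}.
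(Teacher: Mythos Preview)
Your argument is correct and rests on the same ingredients as the paper's---a Green/Riccati solution together with the divergence identity $\div_\mu F=V(\lambda)$---but you organize them differently. The paper integrates the Riccati equation \eqref{eq:riccati-r-equation} directly for the \emph{given} gauge $p$ and completes the square, obtaining
\[
\int_{SM}\bigl(\kappa_p-(p-V(\lambda))^2\bigr)\,\mu=-\int_{SM}\bigl(r+V(\lambda)-p\bigr)^2\,\mu\le 0,
\]
and then reads off $\mathbb{K}=0$ in the equality case by substituting $r=p-V(\lambda)$ back into the Riccati equation. Your route instead first isolates the purely algebraic identity $\int_{SM}\kappa_p\,\mu=\int_{SM}\mathbb{K}\,\mu+\int_{SM}(p-V(\lambda))^2\,\mu$, valid for \emph{any} admissible $p$ and not using the no-conjugate-points hypothesis, thereby reducing the entire family of inequalities to the single statement $\int_{SM}\mathbb{K}\,\mu\le 0$; only then do you invoke Theorem~\ref{theorem:no-conjugate-points-characterization} (equivalently, a Green Riccati solution) to handle that one inequality. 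The two computations are secretly the same---the quantity $r+V(\lambda)-p$ appearing in the paper equals $r_{V(\lambda)}$ in the gauge $p=V(\lambda)$, which is exactly your $-(p_0-V(\lambda))$---but your packaging makes transparent that the $p$-dependence in \eqref{eq:hopf-inequality} is illusory, and that equality for one $p$ forces equality for all. Your care with the integration-by-parts step for merely flow-differentiable $w$ is also more explicit than the paper's appeal to Stokes' theorem; the rigidity step (passing from $p_0=V(\lambda)$ a.e.\ to $F(p_0)=F(V(\lambda))$ a.e.) is handled by both in the same way.
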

As a consequence of the inequality \eqref{eq:hopf-inequality} with $p=V\lambda$, Stokes's theorem, and the Gauss--Bonnet theorem, we get
\begin{equation}\label{eq:hopf-inequality-2}
    2\pi \chi(M)+\int_{SM}(\lambda^2-(V\lambda)^2) \mu\leq 0,
\end{equation}
where $\chi(M)$ is the Euler characteristic of $M$. Furthermore, observe that the exponential map defined in \eqref{eq:exponential-map} cannot yield a covering of the $2$-sphere $\mathbb{S}^2$ for topological reasons, so any surface admitting a thermostat without conjugate points must have genus at least one. 

Let us briefly focus on the case where $M $ is homeomorphic to the $2$-torus $\mathbb{T}^2$. In the geodesic case, we recover the  classical fact by Hopf that $g$ must be flat. In the magnetic case, the inequality \eqref{eq:hopf-inequality-2} implies that $\lambda = 0$ and then $g$ must also be flat (see also \cite[Corollary C]{assenza25}). In some sense, these observations are telling us that the situation on $\mathbb{T}^2$ is very rigid when the flow is volume-preserving. If we allow $\lambda$ to also have a linear term with respect to velocity, \cite[Theorem 1.1]{assylbekov14} tells us that the magnetic component  of $\lambda$ (that is, the zeroth Fourier mode $\lambda_0$) must still identically vanish and the metric $g$ must be conformally flat. 
The following no-go result shows that this rigidity does not apply to more general thermostats on $\mathbb{T}^2$.

\begin{theorem}\label{theorem:counterexample}
    For any Riemannian metric $g$ on $\mathbb{T}^2$, there exists $\lambda\in \mathcal{C}^\infty(S\mathbb{T}^2, \R)$ such that the thermostat $(\mathbb{T}^2, g, \lambda)$ has no conjugate points. Moreover, the function $\lambda$ can always be chosen such that $\lambda_0\neq 0$. 
\end{theorem}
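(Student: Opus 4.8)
The plan is to prove this in two steps: first produce an explicit thermostat without conjugate points and with $\lambda_0\neq 0$ on the \emph{flat} torus by verifying $\kappa_p\leq 0$, and then transfer the example to an arbitrary metric by a conformal change, using that the no-conjugate-points property is invariant under time reparametrization of the flow.

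For the flat case, I would fix a flat metric $g_0$ and a global parallel orthonormal frame, which trivializes $S\mathbb{T}^2\cong\mathbb{T}^2\times(\mathbb{R}/2\pi\mathbb{Z})$ so that $V=\partial_\theta$, $\pi^*K_{g_0}=0$, and $X(\phi)=H(\phi)=0$ for every $\phi$ depending only on the angle $\theta$. For an arbitrary positive $h\in\mathcal{C}^\infty(\mathbb{R}/2\pi\mathbb{Z})$ set $\lambda(\theta):=h(\theta)\sin\theta$ and take the gauge $p(\theta):=h(\theta)\cos\theta$. Then $F(p)=\lambda p'$ and $V(\lambda)=\lambda'$, so $\kappa_p=(\lambda^2+p^2)+(\lambda p'-\lambda' p)=h^2-h^2=0$, and Theorem~\ref{theorem:non-positive-implies-no-conjugate} gives that $(\mathbb{T}^2,g_0,\lambda)$ has no conjugate points. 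Choosing $h=2+\sin\theta$ yields $\lambda_0=\frac{1}{2\pi}\int_0^{2\pi}(2\sin\theta+\sin^2\theta)\,d\theta=\frac12\neq 0$. (The sets $\{\theta\equiv 0\}$ and $\{\theta\equiv\pi\}$ are invariant tori carrying the linear horizontal flow, which is why this same example should supply the advertised non-Anosov thermostat admitting a dominated splitting.) This settles the case $g=g_0$ flat.

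For a general metric $g$, I would invoke uniformization to write $g=e^{2\rho}g_0$ with $g_0$ flat, and use the fibrewise scaling $\Phi:S_gM\to S_{g_0}M$, $(x,v)\mapsto(x,e^{\rho(x)}v)$. This $\Phi$ covers $\mathrm{id}_M$ and, since a conformal change preserves angles, it preserves the angular coordinate and hence the Fourier decomposition in $\theta$; moreover the standard relation between geodesic vector fields under a conformal change, together with $\Phi_*V_g=V_{g_0}$, shows that $\Phi$ conjugates the thermostat flow of $(\mathbb{T}^2,g,\lambda_1)$ to a positive time reparametrization of the thermostat flow of $(\mathbb{T}^2,g_0,\lambda_2)$ with $\lambda_2=e^{\rho}(\lambda_1\circ\Phi^{-1})+H_{g_0}(\rho)$ (up to a sign on the last term that is immaterial here), where $H_{g_0}(\rho)$ is a sum of Fourier modes of vertical degree $\pm 1$ and so has no $n=0$ component. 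Since a positive time reparametrization of the generator composes each $\exp^\lambda_x$ with a radial reparametrization of $T_xM$ — a local diffeomorphism — the no-conjugate-points property is unchanged; hence $(\mathbb{T}^2,g,\lambda_1)$ has no conjugate points if and only if $(\mathbb{T}^2,g_0,\lambda_2)$ does. Taking $\lambda_2$ to be the flat-case $\lambda=h(\theta)\sin\theta$, i.e. setting $\lambda_1:=\bigl(e^{-\rho}(\lambda-H_{g_0}(\rho))\bigr)\circ\Phi\in\mathcal{C}^\infty(S_gM)$, produces a thermostat $(\mathbb{T}^2,g,\lambda_1)$ without conjugate points whose $n=0$ mode is $\frac12 e^{-\rho}\not\equiv 0$, completing the proof.

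The flat computation is a one-line check, so the real content lies in the reduction, and the two things I expect to be the main obstacle are: (i) justifying the invariance of "no conjugate points" under time reparametrization of the flow (arguing through the radial reparametrization of $T_xM$ induced on the exponential map), and (ii) pinning down the conformal change of variables precisely enough to see that it alters $\lambda$ only by a velocity-independent positive factor and an odd term of vertical degree $\pm 1$, so that the magnetic mode of the flat example is transported faithfully rather than being cancelled.
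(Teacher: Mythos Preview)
Your approach is correct but genuinely different from the paper's. The paper does not reduce to the flat case at all: for an \emph{arbitrary} metric $g$ on $\mathbb{T}^2$ it takes $\lambda(x,\theta)=h(x)+\cos(m\theta)$ with $m\geq 2$, uses the gauge $p=-\sin(m\theta)$, and computes directly that
\[
\kappa_p=\pi^*K_g-H(h)+h^2+(2-m)h\cos(m\theta)+1-m,
\]
which is strictly negative as soon as $\|\partial h\|_\infty+h^2+(m-2)|h|<m-1-K_g$; for $m$ large this is satisfied by any small nonzero constant $h$, so $\lambda_0=h\neq 0$. Theorem~\ref{theorem:non-positive-implies-no-conjugate} then gives no conjugate points, and the paper gets the dominated splitting for free from $\kappa_p<0$ via \cite{mettler2019holomorphic}.

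By contrast, you build a flat example with $\kappa_p=0$ (not $<0$) and then push it through a conformal change. Your flat computation is correct, and the conformal transfer is sound: the relation $\lambda_2=e^{\rho}(\lambda_1\circ\Phi^{-1})+H_{g_0}(\rho)$ follows from the standard formula for $\nabla^g-\nabla^{g_0}$, and since $\Phi$ covers $\mathrm{id}_M$ one has $\exp_x^{g,\lambda_1}=\exp_x^{g_0,\lambda_2}\circ R_x$ with $R_x(r,\theta)=(s(r,\theta),\theta)$ a local diffeomorphism of $T_xM\setminus\{0\}$ (note $s$ depends on $\theta$, so ``radial reparametrization'' is a slight misnomer, but this does not affect the argument). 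The Fourier bookkeeping for $\lambda_0$ is also right, since $\Phi$ intertwines the circle actions and $H_{g_0}(\rho)$ has only modes $\pm 1$.

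The trade-off: the paper's route is a single computation that works uniformly in $g$ and yields the stronger conclusion (dominated splitting, hence the advertised non-Anosov example) in the same stroke. Your route isolates a clean structural fact --- conformal invariance of the no-conjugate-points property for thermostats --- but requires assembling the conformal dictionary carefully and, as written, does not immediately deliver the dominated splitting for the non-flat metric, since you only obtain $\kappa_p=0$ and the splitting does not obviously survive the conformal transfer.
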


\subsection{Green bundles in the cotangent bundle} 

A key observation in the study of metrics without conjugate points is the existence of two flow-invariant subbundles of $T(SM)$, known as Green bundles. The construction of these subbundles was extended to the setting of convex Hamiltonians in \cite{contreras99}. However, these arguments do not directly carry over to the thermostat setting, as thermostats may be dissipative.

We take a new approach to understanding the Green bundles by working on the cotangent bundle $T^*(SM)$ as opposed to the tangent bundle $T(SM)$. The primary motivation is that, if we consider the induced dynamics, there is a smooth invariant subbundle $\Sigma\subset T^*(SM)$ which, in spirit, can replace the notion of a contact distribution in the geodesic case. Indeed, the symplectic lift of $\varphi_t$ to $T^*(SM)$, given by
\begin{equation}\label{eq:defn_symplectic_lift}
    \tilde{\varphi}_t(v, \xi) \coloneqq\left(\varphi_t(v), d_v\varphi_t^{-\top} (\xi)\right), \qquad (v,\xi)\in T^*(SM),
\end{equation}
is the Hamiltonian flow of $\xi(F(v))$, so it preserves the \emph{characteristic set} $\Sigma$ with fibers 
\begin{equation} \label{eq:defn_characteristic}
    \Sigma(v) \coloneqq \{\xi \in T_v^*(SM) \ | \ \xi(F(v)) = 0\}.
\end{equation}

When working on $T^*(SM)$, it is natural to introduce a moving coframe. The vector fields $(X, H, V)$ form an orthonormal frame for $T(SM)$ with respect to the Sasaki metric (the natural lift of $g$ to $SM$). We can then consider the corresponding dual frame $(\alpha, \beta, \psi)$ for $T^*(SM)$. The cohorizontal subbundle is defined as $\mathbb{H}^*\coloneqq\R\beta$, whereas the covertical is $\mathbb{V}^*\coloneqq\R\psi$. In the geodesic case, we have $\Sigma=\mathbb{H}^*\oplus\mathbb{V}^*$. For a thermostat, we introduce $\psi_\lambda\coloneqq\psi-\lambda \alpha$ and $\mathbb{V}_\lambda^*\coloneqq\R\psi_\lambda$ so that $\Sigma = \mathbb{H}^*\oplus \mathbb{V}^*_\lambda$. See Figure \ref{figure:subbundles}.

\begin{figure}[h]
\begin{center}
\includegraphics[scale=0.40]{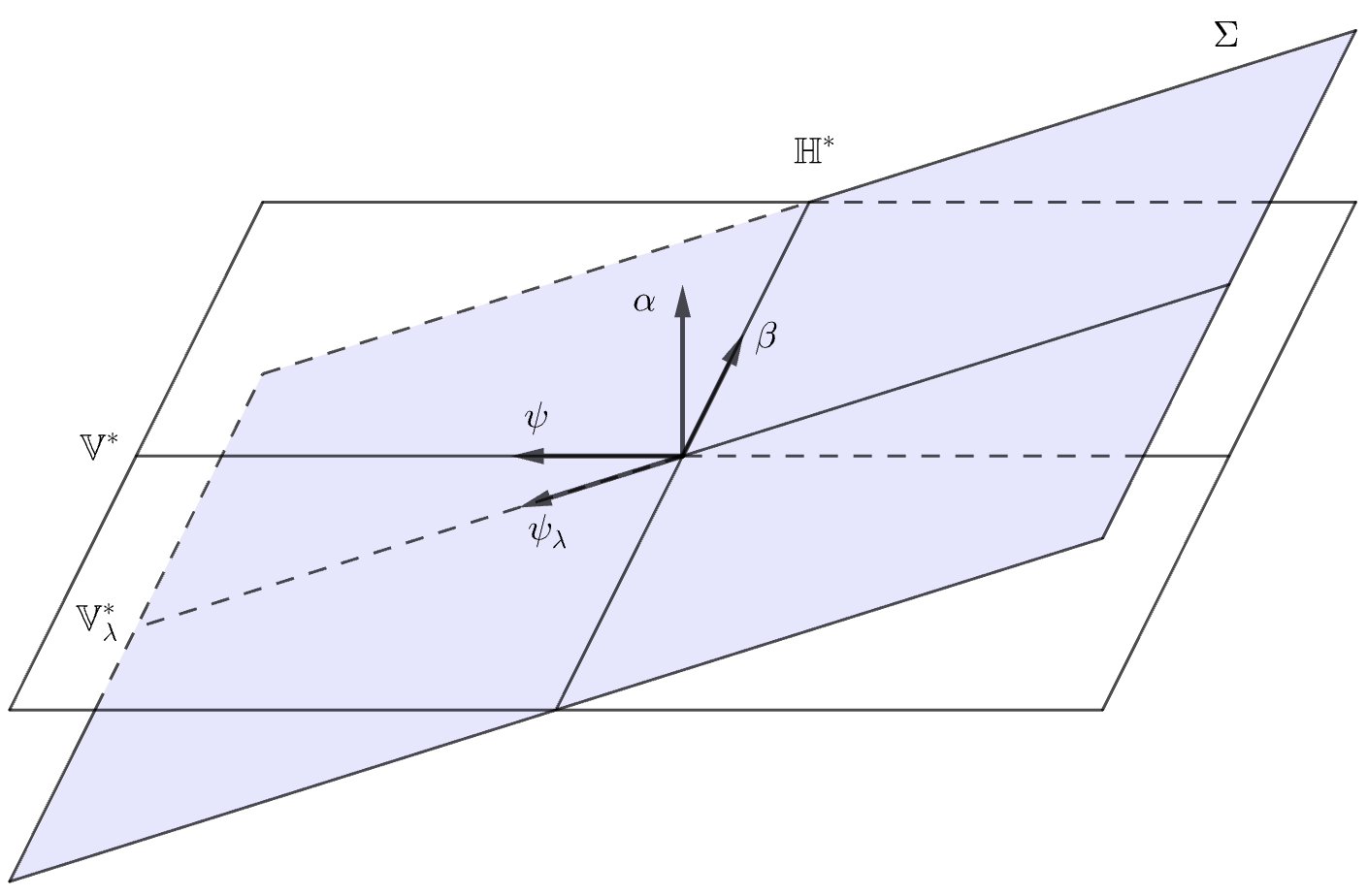}
\end{center}
\caption{The adapted frame $(\alpha, \beta, \psi_\lambda)$ for $T^*(SM)$.}
\label{figure:subbundles}
\end{figure}

In this new setting, we construct the \emph{Green bundles} on orbits without conjugate points.
Note that these are usually defined as subbundles of $T(SM)$ instead of $T^*(SM)$.

\begin{theorem}\label{theorem:green-bundles-definition}
Let $(M, g, \lambda)$ be a thermostat. For a given $v\in SM$, either of the following are observed.
    \begin{enumerate}
        \item[(a)] There exist $t_0\in \R$ and $t\neq 0$ such that
        \begin{equation}
            d\varphi_{t}^{-\top}(\mathbb{H}^*(\varphi_{t_0}(v))=\mathbb{H}^*(\varphi_{t_0+t}(v)).
        \end{equation}
        This happens if and only if the points $\pi(\varphi_{t_0}(v))$ and $\pi(\varphi_{t_0+t}(v))$ on $M$ are conjugate along the thermostat geodesic $t\mapsto \pi(\varphi_t(v))$.
        \item[(b)] There exist two invariant subbundles $G_{s/u}^*\subset \Sigma$ along the orbit of $v$ given by
        \begin{equation}\label{eq:green-bundles-definition}
            \begin{alignedat}{1}
G^*_s(v):&=\lim_{t\to \infty}d\varphi_{-t}^{-\top}(\mathbb{H}^*(\varphi_{t}(v)),\\
 G^*_u(v):&=\lim_{t\to \infty}d\varphi_{t}^{-\top}(\mathbb{H}^*(\varphi_{-t}(v)).
            \end{alignedat}
        \end{equation}
    They satisfy the transversality condition
    \begin{equation}\label{eq:h-transversality}
        G^*_s(v)\cap \mathbb{H}^*(v)=\{0\}=G^*_u(v)\cap \mathbb{H}^*(v).
    \end{equation}
\end{enumerate}
    
\end{theorem}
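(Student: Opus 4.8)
The plan is to reduce everything to the study of a single scalar Riccati equation controlling how lines move inside the two-dimensional invariant planes $\Sigma(v)$, and then to import the classical Hopf--Green monotonicity argument into this cotangent setting. \textbf{Step 1 (the linearized cocycle on $\Sigma$).} Since $\tilde\varphi_t$ preserves $\Sigma$ and each fibre $\Sigma(v)=\mathbb{H}^*(v)\oplus\mathbb{V}^*_\lambda(v)$ is two-dimensional, the restriction of $d\varphi_t^{-\top}$ to $\Sigma$ is, read off in the moving coframe $\{\beta,\psi_\lambda\}$, a $\mathrm{GL}(2,\R)$-cocycle $A_v(t)$ over the flow with $A_v(0)=\mathrm{Id}$. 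Differentiating and using the commutation relations among $X$, $H$, $V$ (in particular $H=[V,X]$) together with $F=X+\lambda V$, one finds $A_v$ solves a linear ODE $\dot A_v=B_v(t)A_v$ whose coefficients are built from $\lambda$, $V(\lambda)$ and the thermostat curvature $\mathbb{K}$ along $\varphi_t(v)$. Writing a line transverse to $\mathbb{H}^*$ as $\R(\psi_\lambda+r(t)\beta)$, the cocycle equation forces $r$ to satisfy a Riccati equation $\dot r=a(t)+b(t)r+c(t)r^2$ with $a,b,c$ bounded uniformly on $SM$ and $c$ of a definite sign; in a gauge $p$ this is precisely the equation underlying Theorem~\ref{theorem:non-positive-implies-no-conjugate}. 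The cohorizontal line corresponds to the value $r=\infty$.

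\textbf{Step 2 (part (a): the conjugate-point dictionary).} The identity $d\varphi_t^{-\top}(\mathbb{H}^*(\varphi_{t_0}v))=\mathbb{H}^*(\varphi_{t_0+t}v)$ says that the solution of the linearized equation which is cohorizontal at time $t_0$ is again cohorizontal at time $t_0+t$. Unwinding the standard identification of solutions of this equation with perpendicular Jacobi fields of the thermostat along $\gamma(s)=\pi(\varphi_s v)$ (and of the latter with the differential of $\exp^\lambda$), this is equivalent to the existence of a nontrivial Jacobi field vanishing both at $t_0$ and at $t_0+t$, that is, to $\gamma$ carrying conjugate points. This proves (a) and shows the dichotomy is exhaustive and exclusive: the orbit of $v$ either carries conjugate points, and then (a) holds, or it does not, and then we are in case (b).

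\textbf{Step 3 (part (b): existence of the Green lines).} Assume the orbit of $v$ has no conjugate points. For $T>0$ let $r_T(\cdot)$ denote the Riccati solution whose line is cohorizontal at time $T$; the absence of conjugate points means $r_T(t)$ is finite (the line remains transverse to $\mathbb{H}^*$) for every $t<T$, equivalently that $d\varphi_{-T}^{-\top}(\mathbb{H}^*(\varphi_T v))$ is transverse to $\mathbb{H}^*(v)$. A comparison principle for the Riccati equation shows $T\mapsto r_T(0)$ is monotone; the uniform bounds on $a,b,c$ and the definite sign of $c$ yield an a priori bound on any solution that survives for unit time, so $r_T(0)$ stays bounded as $T\to\infty$. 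Hence the monotone bounded family converges, and $G^*_s(v):=\R\bigl(\psi_\lambda(v)+(\lim_{T\to\infty}r_T(0))\,\beta(v)\bigr)$ is a well-defined line; invariance along the orbit is immediate from the cocycle identity and the reindexing implicit in \eqref{eq:green-bundles-definition}, while finiteness of the limiting slope is exactly the transversality $G^*_s(v)\cap\mathbb{H}^*(v)=\{0\}$. Running $T\to-\infty$ produces $G^*_u$ identically; the two constructions differ in the sign of the monotonicity used.

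\textbf{Expected main obstacle.} The delicate parts are the bookkeeping in Step 1 --- producing the Riccati equation in the correct gauge so that it matches $\kappa_p$ and so that $c$ has a definite sign --- and the a priori bound in Step 3. In the geodesic or convex Hamiltonian setting one leans on the Lagrangian/symplectic structure of the Green subspaces and on comparison with constant-curvature models, tools that are unavailable for a genuinely dissipative thermostat; so the boundedness must instead be extracted from compactness of $SM$, the uniform bounds on the Riccati coefficients, and the Riccati comparison principle. Fixing the sign of the monotonicity correctly (it is what distinguishes $G^*_s$ from $G^*_u$) and checking that the limit lines lie in $\Sigma$ rather than merely in $T^*(SM)$ --- automatic, since $\Sigma$ is closed and $\tilde\varphi_t$-invariant --- also require some care.
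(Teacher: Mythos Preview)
Your outline is correct and close in spirit to the paper's argument, but the mechanics differ in one notable way. The paper does not work with the Riccati equation for the slope directly. Instead it introduces the damped variable $z=y/m$ with $m(t)=\exp\bigl(-\tfrac12\int_0^tV(\lambda)\,d\tau\bigr)$, reducing the Jacobi equation $\ddot y+V(\lambda)\dot y+\mathbb K\,y=0$ to the undamped normal form $\ddot z+\tilde\kappa\,z=0$; the existence of the Green lines then comes from Lemma~\ref{lemma:generic-jacobi}, whose proof uses the classical Green integral identity $z_{t_0}(t)=z(t)\int_t^{t_0}z(\tau)^{-2}\,d\tau$ together with the algebraic relation $z_{t_0}=z_{-1}+\tfrac{z_{t_0}(-1)}{z(-1)}\,z$ to obtain monotonicity and boundedness of $\dot z_{t_0}(0)$. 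Your Riccati route --- monotonicity from the no-crossing property of first-order ODE solutions, boundedness from the finite-time blowup forced by the definite sign of the quadratic coefficient $c$ --- is equally valid and sidesteps the damping substitution entirely; the paper's route has the compensating advantage that Lemma~\ref{lemma:generic-jacobi} (unboundedness of vanishing Jacobi fields) is reused repeatedly later on.

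One small point of care in your Step~3: the blowup estimate for $\dot r=r^2+br+a$ gives an \emph{upper} bound on $r(0)$ from forward survival on $[0,1]$ and a \emph{lower} bound from backward survival on $[-1,0]$, and it is the latter you need to bound the decreasing family $T\mapsto r_T(0)$ from below. This works precisely because the no-conjugate-points hypothesis guarantees $r_T$ is finite on all of $(-\infty,T)\supset[-1,0]$, but your phrasing ``survives for unit time'' should make the direction explicit. Your Step~2 and the transversality conclusion match the paper.
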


In \cite[Theorem 3.2]{eberlein73}, Eberlein characterized Anosov geodesic flows as the geodesic flows without conjugate points that have transverse Green bundles. In \cite[Theorem C]{contreras99}, a similar statement was given for convex Hamiltonians (which might be non-contact, but are still volume-preserving). Recall that a flow $\{\varphi_t\}_{t \in \R}$ is \emph{Anosov} if there is a flow-invariant splitting $$T(SM) = \R F \oplus E_s \oplus E_u$$ and constants $C \geq 1$ and $0 < \rho < 1$ such that, for all $v\in SM$ and $t \geq 0$, we have 
\begin{equation}\label{eq:hyperbolic-estimates}
\Vert d_v\varphi_{t}|_{E_s(v)}\Vert \leq C\rho^t, \qquad\Vert d_v\varphi_{-t}|_{E_u(v)}\Vert \leq C\rho^t.
\end{equation}

As we will see, due to the lack of volume preservation, the natural extension of these results to thermostats applies to projectively Anosov flows instead.  The flow $\{\varphi_t\}_{t\in \R}$ is said to be \emph{projectively Anosov} if there exist a flow-invariant splitting of the quotient tangent bundle
$$T(SM)/\R F =\mathcal{E}_s\oplus \mathcal{E}_u$$
and constants $C\geq 1$ and $0 <\rho < 1$ such that, for any $v\in SM$ and $t\geq 0$, we have
\begin{equation}\label{eq:dominated-estimates}
\Vert \left.d_v \varphi_t\right|_{\mathcal{E}_s(v)}\Vert  \Vert \left.d_{\varphi_t(v)}\varphi_{-t}\right|_{\mathcal{E}_u(\varphi_t(v))}\Vert\leq C \rho^t,
\end{equation}
where $\Vert \cdot \Vert$ denotes the operator norm induced by the Sasaki metric on $T(SM)$. We assume that both subbundles $\mathcal{E}_{s}, \mathcal{E}_u \subset T(SM)/\R F$ are non-trivial and we say that $\mathcal{E}_u$ \textit{dominates} $\mathcal{E}_s$. These subbundles lift to flow-invariant subbundles $q^{-1}(\mathcal{E}_s)$ and $q^{-1}(\mathcal{E}_u)$ of $T(SM)$, where $q: T(SM)\to T(SM)/\R F$ is the quotient map. We refer to them as the \textit{weak stable} and \textit{unstable subbundles}, respectively.

Projectively Anosov flows also appear in the literature as \textit{conformally Anosov} flows \cite{eliashberg98, blair98} or as flows admitting a \textit{dominated splitting} \cite{arroyo03}. In the latter paper, the authors explain why this definition is the `adequate' notion of dominated splitting for flows. This property does not appear in geodesic or magnetic settings because volume-preserving projectively Anosov flows are Anosov \cite[Proposition 2.34]{araujo10}. In those cases, there is no need to differentiate between the notions. It is also worth noting that one could define the Anosov property directly on the quotient tangent bundle by requiring that the subbundles $\mathcal{E}_{s/u}$ satisfy the stronger estimates \eqref{eq:hyperbolic-estimates} instead of \eqref{eq:dominated-estimates}. These points of view are equivalent: in one direction, it suffices to take $\mathcal{E}_{s/u}\coloneqq q(E_{s/u})$ and the converse is given by \cite[Proposition 5.1]{wojtkowski00}.

The thermostat version of Eberlein's result is hence the following theorem.

\begin{theorem}\label{theorem:characterization-dominated-splitting}
Let  $(M, g, \lambda)$ be a thermostat without conjugate points. It is projectively Anosov if and only if $G^*_s(v)\cap G^*_u(v)=\{0\}$ for all $v\in SM$.
\end{theorem}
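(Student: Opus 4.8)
The plan is to prove the two implications separately, modeling the argument on Eberlein's original proof (and the Hamiltonian adaptation in \cite{contreras99}) but adapted to the cotangent-bundle picture and to dominated rather than hyperbolic splittings.

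\emph{Dominated splitting $\Rightarrow$ transversality of Green bundles.} Suppose $(\varphi_t)$ admits a dominated splitting $T(SM)=\R F\oplus E^s\oplus E^u$. Passing to the cotangent bundle via the isomorphism $\xi\mapsto \xi^\sharp$ (or more precisely by transposing $d\varphi_t$), the splitting \eqref{eq:dominated-estimates} transfers to $\Sigma$: one gets invariant line fields $E^{*}_{s},E^{*}_{u}\subset\Sigma$ with the dominated estimate controlling the relative growth of $\tilde\varphi_t=d\varphi_t^{-\top}$ on them. The key point is to identify these with the Green bundles. Because $G^*_s$ is obtained as $\lim_{t\to\infty}d\varphi_{-t}^{-\top}\mathbb{H}^*(\varphi_t v)$, and $\mathbb{H}^*$ is transversal to $E^{*}_{u}$ inside $\Sigma$ (this transversality must be checked, using \eqref{eq:h-transversality} together with the fact that in the dominated setting $E^{*}_{u}$ cannot meet $\mathbb{H}^*$), the domination forces the pullbacks $d\varphi_{-t}^{-\top}\mathbb{H}^*$ to converge to the "slow" direction $E^{*}_{s}$; symmetrically $G^*_u=E^{*}_{u}$. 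Since $E^{*}_s\cap E^{*}_u=\{0\}$ by the very definition of a splitting, we conclude $G^*_s(v)\cap G^*_u(v)=\{0\}$ for every $v$. I would phrase the convergence step via the standard cone/graph-transform argument: fix a complement, write $d\varphi_{-t}^{-\top}\mathbb{H}^*$ as a graph over $E^{*}_s$ with slope $s_t$, and show $s_t\to 0$ using \eqref{eq:dominated-estimates}.

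\emph{Transversality of Green bundles $\Rightarrow$ dominated splitting.} Assume $G^*_s(v)\cap G^*_u(v)=\{0\}$ for all $v$. Then $\Sigma=G^*_s\oplus G^*_u$ is a continuous (by Theorem~\ref{theorem:green-bundles-definition} the bundles are limits of continuous objects, and transversality plus invariance gives continuity), $\tilde\varphi_t$-invariant splitting of the $2$-plane bundle $\Sigma$, and adding $\R F$ — or rather its conormal — completes this to a splitting of $T^*(SM)$, hence dually of $T(SM)=\R F\oplus E^s\oplus E^u$. It remains to establish the quantitative estimate \eqref{eq:dominated-estimates}. This is where the monotonicity properties of the Green bundles enter: one shows that along the orbit the "position" of $d\varphi_t^{-\top}\mathbb{H}^*$ relative to $G^*_s$ and $G^*_u$ is monotone in $t$ (the Riccati/comparison mechanism underlying Theorem~\ref{theorem:green-bundles-definition}), and transversality makes this monotonicity strict and uniform on the compact manifold $SM$. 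A compactness argument then upgrades "strictly contracting after some time $T(v)$" to uniform constants $C,\rho$, giving \eqref{eq:dominated-estimates}. One must be careful that, because the flow is dissipative, the two bundles need not be symmetric under time reversal, so the estimate genuinely controls only the \emph{product} $\Vert d\varphi_t|_{E^s}\Vert\,\Vert d\varphi_{-t}|_{E^u}\Vert$, which is exactly what a dominated splitting asks for.

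\emph{Main obstacle.} The delicate step is the quantitative upgrade in the second implication: turning pointwise strict transversality of $G^*_s$ and $G^*_u$ into the uniform exponential domination \eqref{eq:dominated-estimates}. In Eberlein's volume-preserving setting one exploits that $d\varphi_t$ restricted to the $2$-plane $\Sigma(v)$ has determinant of controlled size, so control of one Green bundle automatically controls the other; here we lose that, and instead must extract the uniform gap directly from the compactness of $SM$ and the continuity of the splitting, checking that the "angle" between $G^*_s$ and $G^*_u$ is bounded below and that the induced action on the quotient line bundles is eventually uniformly contracting/expanding in the dominated sense. Verifying continuity of $G^*_{s/u}$ under the transversality hypothesis — rather than merely upper semicontinuity of the limits in \eqref{eq:green-bundles-definition} — is the technical heart, and I expect it to be handled exactly as in the no-conjugate-points literature (the limits defining $G^*_{s/u}$ are monotone, so transversality forces the convergence to be locally uniform).
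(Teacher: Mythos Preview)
Your forward direction (dominated splitting $\Rightarrow$ transversality) is essentially what the paper does: one checks $\Sigma=(E^s)^*\oplus(E^u)^*$ and then that the domination forces $G^*_{s/u}=(E^{s/u})^*$.

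For the converse, the paper takes a genuinely different and much shorter route. Instead of attempting to extract the uniform estimate \eqref{eq:dominated-estimates} directly from monotonicity and compactness, it reduces to a curvature criterion: after establishing continuity of the Green bundles (Proposition~\ref{prop:continuity_green}), it uses the Riccati equation \eqref{eq:riccati-r-equation} to manufacture a continuous gauge $p$ with $\kappa_p<0$. Concretely (Theorem~\ref{theorem:partial-converse}), setting $w:=-(r^s+r^u)/2$ one computes $\kappa_w=-\tfrac14(r^s-r^u)^2<0$, the strict inequality coming precisely from transversality. Then \cite[Theorem~3.7]{mettler2019holomorphic} is invoked, which already contains the passage from $\kappa_p<0$ to a dominated splitting. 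This completely sidesteps the quantitative upgrade you correctly flag as the main obstacle; the price is that the hard dynamical analysis is outsourced to \cite{mettler2019holomorphic}.

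On continuity: your first justification (``limits of continuous objects, transversality plus invariance gives continuity'') is insufficient as stated. Your fallback (monotone limits becoming locally uniform under transversality) is closer in spirit, but the paper argues differently: it proves a dynamical criterion (Lemma~\ref{lem:dynamical_criteria}) characterizing $\xi\in G^*_s$ by boundedness of the damped Jacobi solution $z$ for $t\geq 0$, and crucially obtains a \emph{uniform} bound $|z(t)|\leq z(0)/A$ via an Eberlein-type growth lemma (Lemma~\ref{lemma:eberlein-3.7}, Remark~\ref{rem:upper_bound}). That uniform bound is what survives limits and yields continuity. Your Eberlein-style program could presumably be pushed through, but the paper's curvature detour is cleaner and avoids confronting the dissipative product estimate directly.
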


\begin{remark} \label{rem:klingenberg}
    In establishing the result for geodesic flows, Eberlein uses Klingenberg \cite{klingenberg74} to argue that an Anosov geodesic flow must have no conjugate points. In the absence of such a result for projectively Anosov thermostats, we assume that the thermostat is without conjugate points.
\end{remark}

For a projectively Anosov thermostat, we no longer have a direct sum decomposition of the tangent bundle, but only the weaker relation
$$T(SM)=q^{-1}(\mathcal{E}_s)+ q^{-1}(\mathcal{E}_u)$$
with $q^{-1}(\mathcal{E}_s)\cap q^{-1}(\mathcal{E}_u)=\R F$. Nonetheless, we can still define the \emph{dual stable} and \emph{unstable subbundles} $E^*_{s}, E^*_u\subset T^*(SM)$ via
\begin{equation}\label{eq:definition-cotangent-bundles}
   E^*_s (q^{-1}(\mathcal{E}_{s}))=0=E^*_u(q^{-1}(\mathcal{E}_u)).
\end{equation}
With this definition, we then obtain the direct sum decomposition 
$$\Sigma = E^*_s \oplus E^*_u,$$
and $E^*_{s/u}$ satisfy analogues of the estimates \eqref{eq:dominated-estimates}, with $d\varphi_t$ replaced by $d\varphi_t^{-\top}$. If the thermostat has no conjugate points and is projectively Anosov, then $G^*_{s/u}=E_{s/u}^*$.

To prove Theorem \ref{theorem:characterization-dominated-splitting}, we use the following characterization of thermostats with transverse Green bundles. Interestingly, even
in the well-studied geodesic case, it provides a new partial converse to \cite{anosov67}: although the Anosov property does not imply negative Gaussian curvature \cite{donnay03, donnay18}, it does tell us that the thermostat curvature with respect to an appropriate gauge is negative everywhere.

\begin{theorem}\label{theorem:partial-converse}
    If a thermostat $(M,g,\lambda)$ without conjugate points has transverse Green bundles, there is a continuous function $p : SM \to \R$ smooth along the flow with $\kappa_p < 0$.
\end{theorem}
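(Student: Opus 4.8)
The plan is to produce the gauge $p$ directly from the transversal Green bundles $G^*_{s/u}$, which are flow-invariant line subbundles of the plane bundle $\Sigma = \mathbb{H}^*\oplus\mathbb{V}^*_\lambda$, and then extract a Riccati-type ODE along the flow whose solution encodes the slope of one Green bundle relative to the coframe $\{\beta,\psi_\lambda\}$. More precisely, since $G^*_s(v)\cap\mathbb{H}^*(v)=\{0\}$ by the transversality condition \eqref{eq:h-transversality}, each $G^*_s(v)$ is the graph of a linear map $\mathbb{V}^*_\lambda(v)\to\mathbb{H}^*(v)$, i.e. we can write a generator of $G^*_s(v)$ as $\psi_\lambda(v) + r_s(v)\,\beta(v)$ for a unique scalar $r_s(v)$; similarly for $G^*_u$ with scalar $r_u(v)$. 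The Green bundles vary continuously in $v$ (they are limits of the continuously-varying bundles $d\varphi^{-\top}_{\mp t}\mathbb{H}^*(\varphi_{\pm t}(v))$, and the convergence is locally uniform on orbits by the no-conjugate-points machinery of Theorem 1.7), so $r_s$ and $r_u$ are continuous on $SM$; and since the bundles are $\tilde\varphi_t$-invariant and smooth along orbits, $r_s$ and $r_u$ are differentiable along the flow. Differentiating the invariance relation $\tilde\varphi_t(G^*_s(v))=G^*_s(\varphi_t(v))$ — equivalently applying $d\varphi^{-\top}_t$ and reading off components in the moving coframe — yields a scalar Riccati equation of the form $F(r) + r^2 - V(\lambda)r + (\pi^*K_g - H(\lambda)+\lambda^2) = 0$ along every orbit; comparing with \eqref{eq:definition-of-kappa-p}, this says exactly $\kappa_r \equiv 0$ when $r$ is one of the Green slopes. (The exact coefficients in this Riccati equation should be read off from the structure equations for $\{X,H,V\}$ and the definition \eqref{eq:defn_symplectic_lift}–\eqref{eq:defn_characteristic} of the lifted flow on $\Sigma$; this is the computational heart of the construction and is where the precise form of $\kappa_p$ in \eqref{eq:definition-of-kappa-p} gets matched.)

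Next I would exploit transversality to get the strict inequality. Since $G^*_s(v)\neq G^*_u(v)$ for all $v$ and both are transversal to $\mathbb{H}^*(v)$, we have $r_s(v)\neq r_u(v)$ everywhere, and by continuity and compactness of $SM$ there is $\delta>0$ with $|r_u(v)-r_s(v)|\geq\delta$ uniformly. The natural candidate is an interpolation $p_\theta := (1-\theta)r_s + \theta r_u$, or more robustly a convex combination weighted so as to stay strictly between the two Green slopes. The point is a standard convexity/monotonicity feature of Riccati flows: if $r$ solves $F(r) = -(r^2 - V(\lambda)r + c)$ with $c := \pi^*K_g-H(\lambda)+\lambda^2$, and $p$ lies strictly between two such solutions $r_s<r_u$ (or $r_u<r_s$), then $\kappa_p = F(p) + p^2 - V(\lambda)p + c$ can be computed as $\kappa_p = F(p) - F(r) + (p^2 - r^2) - V(\lambda)(p-r)$ for either choice $r\in\{r_s,r_u\}$, and choosing $r$ to be the \emph{upper} envelope when $p$ is below it forces $\kappa_p<0$ by the sign of the quadratic defect — concretely $\kappa_p = -(p-r_s)(r_u-p)\cdot(\text{positive factor})$ after the dust settles, which is strictly negative precisely because $p$ is strictly between $r_s$ and $r_u$. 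I would pick $p = \tfrac12(r_s+r_u)$, check it is continuous and differentiable along the flow (immediate from the same properties of $r_s,r_u$), and verify $\kappa_p = -\tfrac14(r_u-r_s)^2 \leq -\tfrac14\delta^2 < 0$.

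The main obstacle — and the step I would be most careful about — is establishing rigorously that the Green slopes $r_s, r_u$ are well-defined, continuous on all of $SM$, and differentiable along the flow. The construction in Theorem 1.7 gives the Green bundles pointwise as limits along individual orbits; continuity in $v$ transverse to the flow requires a monotonicity argument (the bundles $d\varphi^{-\top}_{-t}\mathbb{H}^*(\varphi_t(v))$ sweep monotonically toward $G^*_s$ in the appropriate projective coordinate, by the absence of conjugate points, which controls the limit uniformly) together with continuity of finite-time data. This is the analogue of the classical fact that stable/unstable Green subspaces of a geodesic flow without conjugate points depend continuously on the point, and I would adapt that argument — in the cotangent/Riccati formulation it becomes a comparison principle for the scalar Riccati equation, comparing solutions with initial condition "$+\infty$" (i.e. $\mathbb{H}^*$) started at different times. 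Once continuity is in hand, differentiability along the flow is automatic because each $r_s(\varphi_t(v))$ solves the smooth ODE above with the boundary behaviour inherited from the limit, so $t\mapsto r_s(\varphi_t(v))$ is the (smooth) solution of a first-order ODE; the same for $r_u$. I would also double-check the bookkeeping that "no conjugate points along the orbit of $v$" is exactly what guarantees we are in case (b) of Theorem 1.7 for every $v$, so that $r_s$ and $r_u$ are globally defined — this is where the hypothesis "without conjugate points" is used in full strength.
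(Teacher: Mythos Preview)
Your approach is exactly the paper's: define the Green slopes $r^{s/u}$ via $r^{s/u}\beta+\psi_\lambda\in G^*_{s/u}$, use the Riccati equation they satisfy (the paper's Lemma~\ref{lemma:riccati-equation}), take their average as the gauge, and arrive at $\kappa_p=-\tfrac14(r^s-r^u)^2<0$.

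Two corrections. First, with the paper's conventions the Riccati equation (at $p=0$) reads $r^2+V(\lambda)\,r+\kappa_0-F(r)=0$, not the one you wrote; consequently it is $\kappa_{-r}$, not $\kappa_r$, that vanishes for a Green slope, and the right gauge is $p=-\tfrac12(r^s+r^u)$ rather than $+\tfrac12(r^s+r^u)$. With that sign your formula $\kappa_p=-\tfrac14(r^s-r^u)^2$ is precisely what the paper obtains. Second, your instinct that continuity of $r^{s/u}$ is the real work is correct, but your parenthetical justification (locally uniform convergence along orbits, or the bare no-conjugate-points machinery of Theorem~\ref{theorem:green-bundles-definition}) does not by itself give continuity transverse to the flow. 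The paper devotes a separate argument to this (Proposition~\ref{prop:continuity_green}), and it goes by a different route than your monotonicity sketch: it first establishes a dynamical characterization of $G^*_{s/u}$ --- a covector lies in $G^*_s$ if and only if the associated damped Jacobi solution $z$ stays bounded for $t\geq 0$ --- and this characterization is where transversality enters essentially, via the fact that transversal Green bundles force every nontrivial $z$ to be unbounded on at least one half-line. Your Riccati-comparison approach to continuity may well work, but you should expect transversality to be needed there too, not just no conjugate points.
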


Finally, we explore
the other extreme; namely, when the Green bundles collapse to a line everywhere instead of being transverse. For geodesic flows without conjugate points, a conjecture of Freire and Mañé can be rephrased as stating that the Green bundles collapse if and only if the metric is flat \cite{freire82}. Note that, in the geodesic case, the Green bundles collapsing implies that the fundamental group grows sub-exponentially. However then, if $M$ is a surface, it must be the $2$-sphere or the $2$-torus. The conjecture for surfaces then follows by \cite{hopf48}. 

A natural question is whether this extends to the thermostat setting with the thermostat curvature in place of the Gaussian curvature. As we will see in Proposition \ref{propn:example}, this is not the case. In spite of this, there is still a connection between the Green bundles collapsing to a line, the thermostat curvature $\mathbb{K}$, and a quantity which we refer to as the \emph{damped thermostat curvature},
\begin{equation}\label{eq:definition-of-damped-thermostat-curvature}
    \tilde{\kappa} \coloneqq \pi^*K_g - H\lambda + \lambda^2 + \frac{FV\lambda}{2} - \frac{(V\lambda)^2}{4}.
\end{equation}
Note that $\tilde{\kappa}$ is simply $\kappa_p$ with the particular choice of $p=V\lambda/2$.

\begin{theorem} \label{theorem:collapse}
    Let $(M, g, \lambda)$ be a thermostat.
    \begin{enumerate}
        \item[(a)] If $\mathbb{K} = 0$, then the flow has no conjugate points and $G^*_s=G^*_u = \R(\psi_\lambda - V(\lambda) \beta)$ $\mu$-almost everywhere. Moreover, if
$V\lambda=0$, then $G^*_s=G^*_u = \R\psi_\lambda$ everywhere. 
        
        \item[(b)] If $\tilde{\kappa} \leq 0$, then, for any invariant Borel measure $\nu$ on $SM$, we have $\tilde{\kappa}= 0$ $\nu$-almost everywhere if and only if $ G^*_s=G^*_u$ $\nu$-almost everywhere.

    \end{enumerate}
    
\end{theorem}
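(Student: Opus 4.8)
The plan is to reduce both parts to the scalar Riccati equation obeyed by the Green bundles, and then to feed in, for~(a), an integral identity against the Liouville measure, and for~(b), a convexity argument performed in the damped gauge $p=V(\lambda)/2$.

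In each case one first notes the flow has no conjugate points: for~(a), $\mathbb{K}=\kappa_{V(\lambda)}\le 0$, and for~(b), $\tilde{\kappa}=\kappa_{V(\lambda)/2}\le 0$ by \eqref{eq:definition-of-damped-thermostat-curvature}, so Theorem~\ref{theorem:non-positive-implies-no-conjugate} applies. Then Theorem~\ref{theorem:green-bundles-definition}(b) produces the invariant bundles $G^*_{s/u}\subset\Sigma$ transversal to $\mathbb{H}^*$, so we may write $G^*_{s/u}(v)=\R\bigl(\psi_\lambda(v)-r_{s/u}(v)\beta(v)\bigr)$ for functions $r_{s/u}:SM\to\R$ differentiable along the flow. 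The structural input — already used in the cotangent description of $\Sigma$ and in the proof of Theorem~\ref{theorem:partial-converse} — is that a line $\R(\psi_\lambda-r\beta)\subset\Sigma$ transported by $\tilde\varphi_t$ evolves along the orbit by the Riccati equation $\kappa_r=0$; hence $\kappa_{r_s}=\kappa_{r_u}=0$ identically on $SM$, and $G^*_s(v)=G^*_u(v)$ iff $r_s(v)=r_u(v)$. Note $\{r_s=r_u\}$ is automatically flow-invariant, both functions solving the same Riccati ODE along orbits.

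For part~(a), assume $\mathbb{K}=0$, so $V(\lambda)$ is itself a global solution of $\kappa_p=0$. Integrate the pointwise identity $\kappa_{r_s}=0$ over $SM$ against the Liouville measure $\mu$. Since $X$ and $V$ preserve $\mu$ we have $\mathrm{div}_\mu F=V(\lambda)$, hence $\int_{SM}F(h)\,\mu=-\int_{SM}hV(\lambda)\,\mu$; applying this with $h=r_s$, rearranging and completing the square gives
\[
\int_{SM}(r_s-V(\lambda))^2\,\mu \;=\; \int_{SM}(V(\lambda))^2\,\mu-\int_{SM}\bigl(\pi^*K_g-H(\lambda)+\lambda^2\bigr)\,\mu \;=\; -\int_{SM}\mathbb{K}\,\mu \;=\; 0,
\]
where the middle equality uses $\int_{SM}F(V(\lambda))\,\mu=-\int_{SM}(V(\lambda))^2\,\mu$. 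Thus $r_s=V(\lambda)$ $\mu$-a.e., and the identical computation for $r_u$ yields $G^*_s=G^*_u=\R(\psi_\lambda-V(\lambda)\beta)$ $\mu$-a.e. If moreover $V(\lambda)\equiv 0$, then $\mathbb{K}=\pi^*K_g-H(\lambda)+\lambda^2=0$ and the Riccati equation degenerates to $F(r)+r^2=0$; writing $r=F(y)/y$ for a positive $y$ along the orbit this is $F(F(y))=0$, so the approximating Jacobi solutions defining $r_{s/u}$ — those normalized to $1$ at time $0$ and vanishing at time $\pm t$, which correspond to $\mathbb{H}^*$ there — are affine, forcing the approximants to equal $\mp 1/t\to 0$ for every $v$; hence $G^*_s=G^*_u=\R\psi_\lambda$ everywhere.

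For part~(b), assume $\tilde{\kappa}\le 0$ and pass to the damped gauge: with $\rho_{s/u}:=r_{s/u}-V(\lambda)/2$ the equation $\kappa_{r_{s/u}}=0$ becomes $F(\rho_{s/u})+\rho_{s/u}^2=-\tilde{\kappa}$ (this gauge is the unique one killing the linear term). Writing $\rho=F(y)/y$ gives the second-order equation $F(F(y))+\tilde{\kappa}\,y=0$ along the orbit, so $\tilde{\kappa}\le 0$ makes positive solutions convex, and the standard secant estimate applied to the Jacobi solutions vanishing at time $\pm t$ yields $\rho_s\le 0\le\rho_u$ everywhere. Now fix an invariant Borel measure $\nu$. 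If $G^*_s=G^*_u$ $\nu$-a.e., then on the flow-invariant $\nu$-full set $\{r_s=r_u\}$ we have $\rho_s=\rho_u$, hence $\rho_s=\rho_u=0$ by the sign constraint; since this set is invariant, $\rho_s$ vanishes along each such orbit, so $F(\rho_s)=0$ there, and the Riccati equation forces $\tilde{\kappa}=0$ $\nu$-a.e. Conversely, if $\tilde{\kappa}=0$ along $\nu$-a.e. orbit (the set of such $v$ being $\nu$-full by invariance of $\nu$ and continuity of $\tilde{\kappa}$), then along those orbits the Jacobi equation is $F(F(y))=0$, and as in part~(a) the approximating affine solutions force $\rho_s=\rho_u=0$, i.e. $G^*_s=G^*_u=\R(\psi_\lambda-V(\lambda)\beta/2)$, $\nu$-a.e.

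The Riccati manipulations and the convexity estimate are routine and parallel to the geodesic case. The point requiring genuine care is the integration by parts in part~(a): it needs $G^*_s$, equivalently $r_s$, to be continuous and bounded, so that $r_sF$ is a legitimate vector field for the divergence theorem on $SM$ and $r_s\in L^2(SM,\mu)$. This continuity holds because in the absence of conjugate points the limit in Theorem~\ref{theorem:green-bundles-definition}(b) converges uniformly on compact sets, but it must be invoked explicitly; one should also take care to match the sign conventions between the Riccati flow on $\Sigma\subset T^*(SM)$ and the function $\kappa_p$.
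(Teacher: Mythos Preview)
Your approach to part~(a) is essentially the paper's: both integrate the Riccati identity for $r_s$ against $\mu$ and complete the square (the paper packages this as \eqref{eqn:key} from the proof of Theorem~\ref{theorem:hopf} with $p=V(\lambda)$). The $V(\lambda)=0$ case is also the same, via explicit affine Jacobi fields. However, your stated justification for the integration by parts is wrong: you assert that $r_s$ is continuous because the limit in Theorem~\ref{theorem:green-bundles-definition}(b) converges uniformly, but the paper never proves this --- continuity of the Green bundles is established only under transversality (Proposition~\ref{prop:continuity_green}), which is precisely what fails here. What you actually need, and what the paper silently uses in the proof of Theorem~\ref{theorem:hopf}, is that $r_s$ is \emph{bounded}; this follows from the Riccati comparison Lemma~\ref{lem:standard-ricc} applied to the damped equation \eqref{eq:damped-riccati} together with \eqref{eq:riccati-reln}. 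Boundedness plus differentiability along the flow is enough to justify $\int F(r_s)\,\mu=-\int r_s V(\lambda)\,\mu$ via a flow-box argument.

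Your part~(b), by contrast, takes a genuinely different and more elementary route than the paper. The paper passes to ergodic components, invokes Oseledets' theorem, relates the collapse of Green bundles to vanishing of Lyapunov exponents via Lemma~\ref{lemma:lyapunov-collapse}, and then matches the sets $\{\chi^u>0\}$ and $\{\rho<0\}$ (with $\rho$ the Birkhoff average of $\tilde{\kappa}$) using Lemma~\ref{lem:gen-hopf}, a Cauchy--Schwarz bound, and a Riccati integration. You bypass all of this with the pointwise convexity inequality $\rho_s\le 0\le\rho_u$: when $\tilde{\kappa}\le 0$, positive damped Jacobi fields are convex, so the secant slopes of the approximants $z_{t_0}$ force the sign of $\dot z_{t_0}(0)$ and hence of $\rho_{s/u}$ in the limit. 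Then collapse forces $\rho_s=\rho_u=0$ on a flow-invariant full-measure set, and the Riccati equation gives $\tilde{\kappa}=0$ directly; the converse reduces to the affine case along $\nu$-a.e.\ orbit. This is cleaner and avoids ergodic theory entirely, at the cost of relying on the sign hypothesis $\tilde{\kappa}\le 0$ in a more essential (pointwise) way.
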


In \S\ref{section:examples}, we will show that Theorem \ref{theorem:collapse}(a) is optimal: when $V\lambda\neq 0$, it is possible to have $\mathbb{K}=0$ yet $G^*_s(v)\cap G^*_u(v)=\{0\}$ for some $v\in SM$. See Figure \ref{figure:three-cases}. In particular, this implies that the conjecture of Freire and Mañé does \emph{not} extend to the setting of thermostats with the thermostat curvature in place of the Gaussian curvature. However, observe that if $V\lambda = 0$ (that is, the system is magnetic), then $\mathbb{K} = \tilde{\kappa} = \kappa_0$ and $\mu$ becomes an invariant measure for the thermostat flow. Thus, Theorem \ref{theorem:collapse} implies the following result.

\begin{corollary} \label{cor:collapse}
    Let $(M,g,\lambda)$ be a magnetic system with $\kappa_0 \leq 0$. We have $\kappa_0 = 0$ if and only if $G_s^* = G_u^*$ everywhere.
\end{corollary}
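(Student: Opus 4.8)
The plan is to read the corollary off Theorem~\ref{theorem:collapse}, exploiting the coincidences special to the magnetic case. First I would record the reductions. Since $\lambda$ does not depend on the velocity we have $V(\lambda)=0$, so the three curvatures coincide: $\mathbb{K}=\tilde{\kappa}=\kappa_0$, where by \eqref{eq:definition-of-kappa-p} we have $\kappa_0=\pi^*K_g-H(\lambda)+\lambda^2$; in particular $\kappa_0$ is smooth, hence continuous, on $SM$. The hypothesis $\kappa_0\le 0$ is therefore at once the bound $\tilde{\kappa}\le 0$ needed in part (b) and, via Theorem~\ref{theorem:non-positive-implies-no-conjugate} applied with $p\equiv 0$, a guarantee of no conjugate points, so by Theorem~\ref{theorem:green-bundles-definition} the Green bundles $G^*_s,G^*_u$ are defined along every orbit. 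Finally, as noted before the corollary statement, in the magnetic case $\mu$ is an invariant Borel measure for the flow, and it has full support on $SM$.

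For the forward implication, assume $\kappa_0=0$. Then $\mathbb{K}=0$, so Theorem~\ref{theorem:collapse}(a) applies, and since $V(\lambda)=0$ its stronger conclusion gives $G^*_s=G^*_u=\R\psi_\lambda$ on all of $SM$.

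For the reverse implication, assume $G^*_s=G^*_u$ everywhere; in particular this holds $\mu$-almost everywhere. Applying Theorem~\ref{theorem:collapse}(b) with the invariant measure $\nu=\mu$ (legitimate because $\tilde{\kappa}=\kappa_0\le 0$) yields $\tilde{\kappa}=0$ $\mu$-almost everywhere, i.e. $\kappa_0=0$ $\mu$-almost everywhere. Since $\kappa_0$ is continuous and $\supp\mu=SM$, this upgrades to $\kappa_0\equiv 0$.

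The argument is essentially a substitution, so there is no serious obstacle; the only points that require care are (i) invoking precisely the $V(\lambda)=0$ refinement of Theorem~\ref{theorem:collapse}(a) in the forward direction, since the generic conclusion there only holds $\mu$-almost everywhere, and (ii) the passage from $\mu$-almost everywhere to everywhere in the reverse direction, which relies on continuity of $\kappa_0$ together with the full support of $\mu$.
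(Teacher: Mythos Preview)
Your proof is correct and matches the paper's own reasoning: the corollary is stated immediately after the observation that in the magnetic case $V(\lambda)=0$ forces $\mathbb{K}=\tilde{\kappa}=\kappa_0$ and makes $\mu$ flow-invariant, with the paper simply saying ``Thus, Theorem~\ref{theorem:collapse} implies the following.'' You have spelled out precisely the details the paper leaves implicit, including the continuity-and-full-support upgrade in the reverse direction.
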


\begin{figure*}[h!]
\centering
\begin{subfigure}[t]{0.34\textwidth}
    \centering
\includegraphics[scale=0.24]{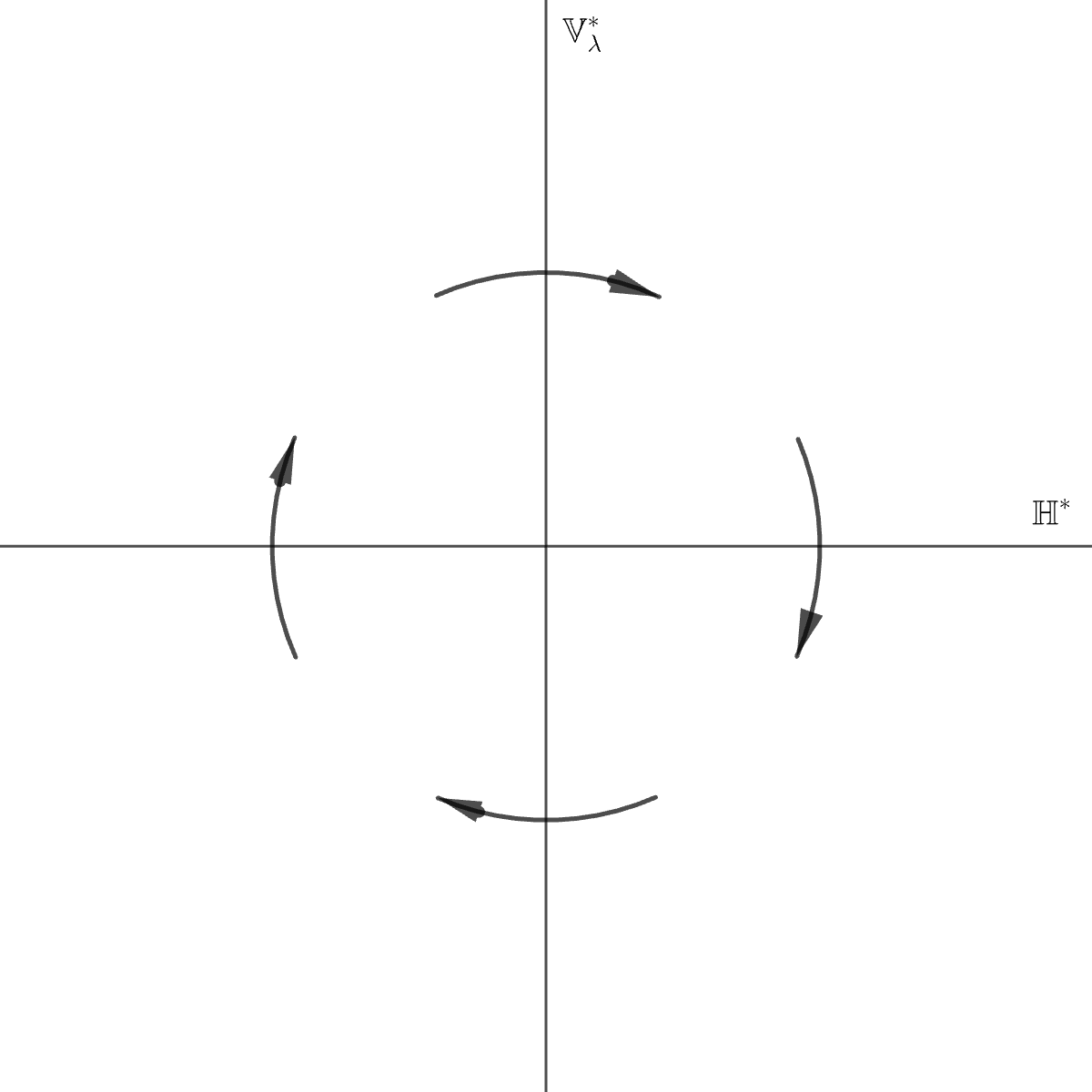}
\caption*{(a) Conjugate points}
\end{subfigure}%
\begin{subfigure}[t]{0.34\textwidth}
    \centering
\includegraphics[scale=0.24657534246575344]{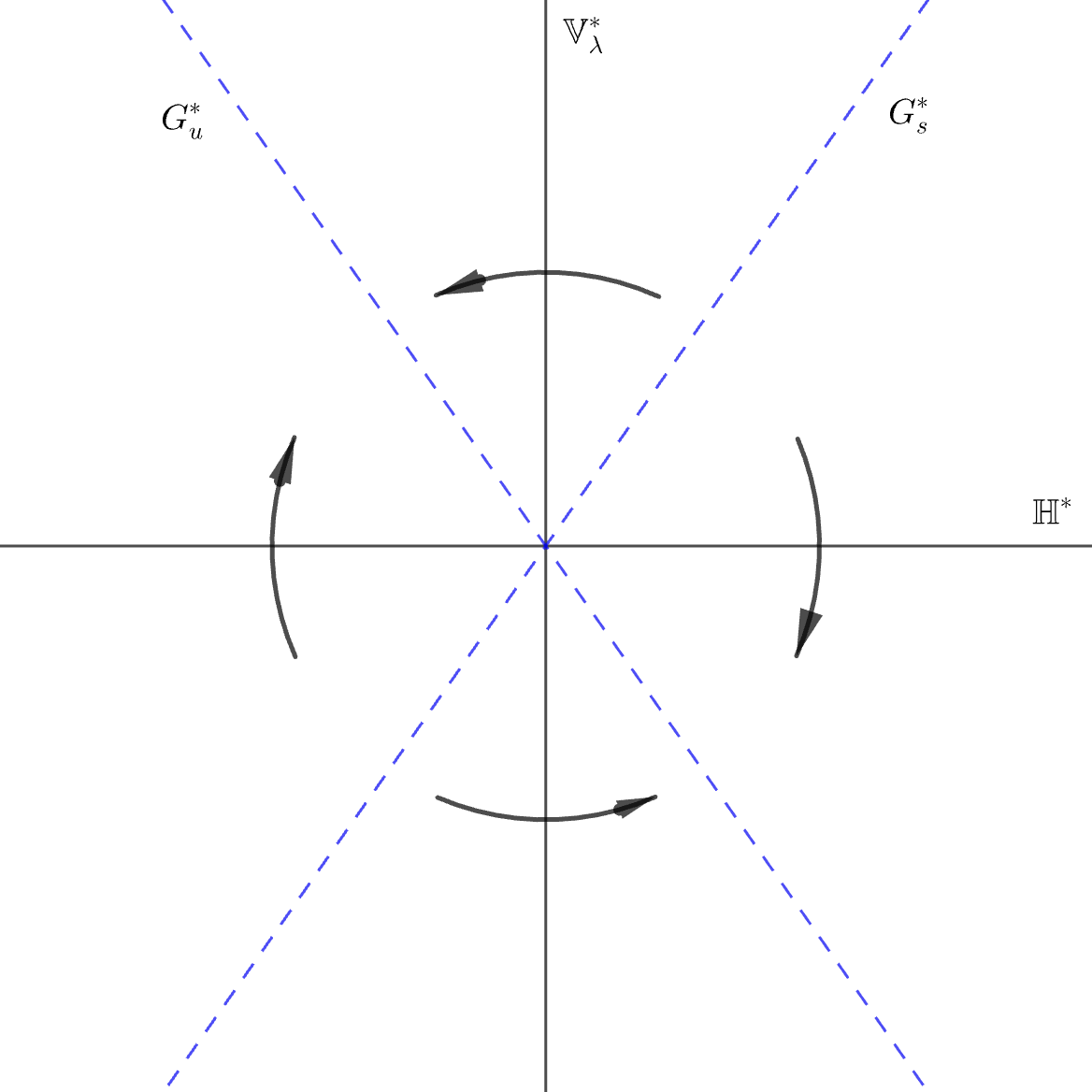}
\caption*{(b) Projectively Anosov}
\end{subfigure}%
\begin{subfigure}[t]{0.34\textwidth}
    \centering
\includegraphics[scale=0.24]{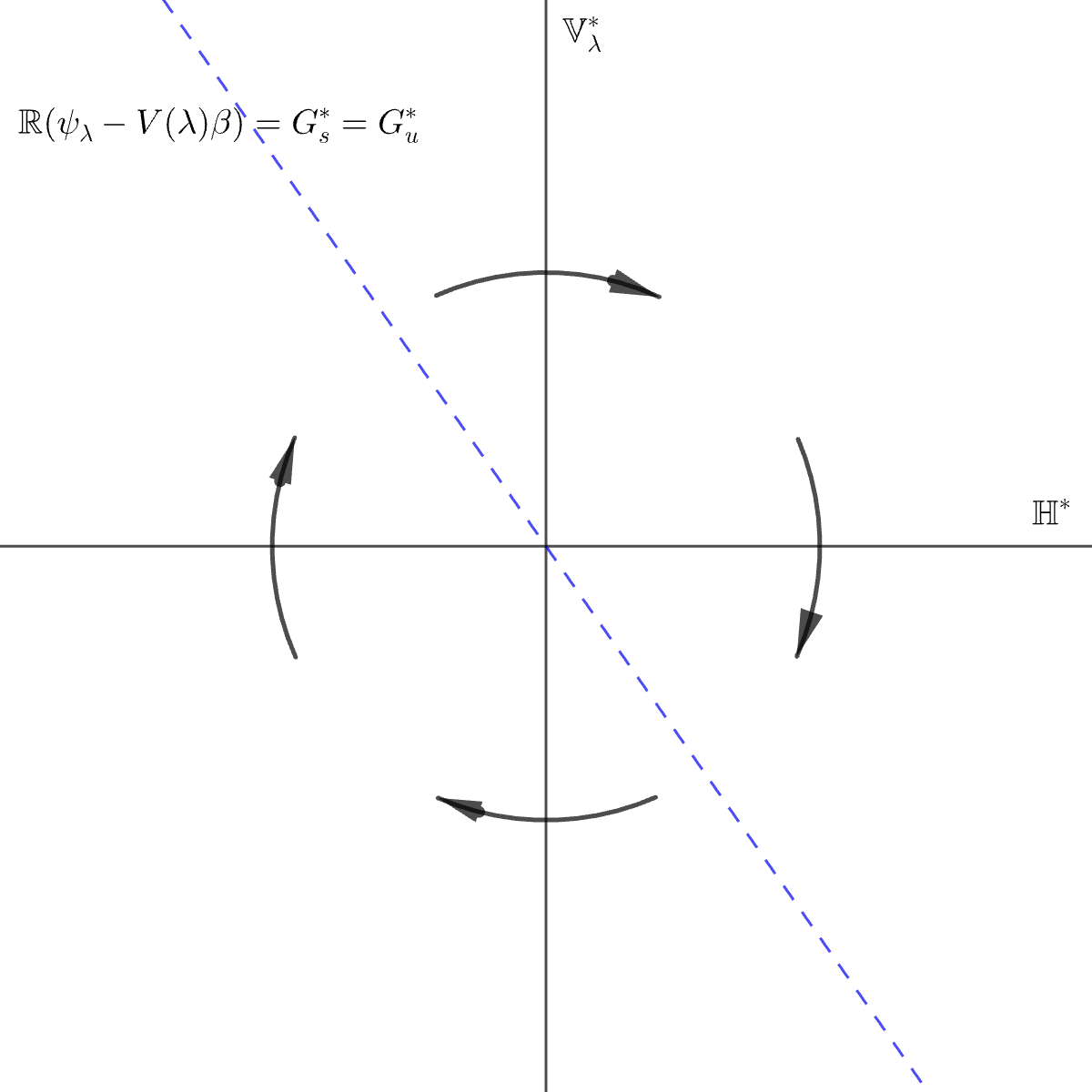}
\caption*{(c) $\mathbb{K}=0$}
\end{subfigure}
\caption{The lifted dynamics on the characteristic set $\Sigma$.}
\label{figure:three-cases}
\end{figure*}

Combining the above results with \cite{dairbekov07}[Lemma 4.1] and \cite[Proposition 3.5, Theorem 3.7]{mettler19}, we have the following picture.
\begin{center}
\small
\begin{tikzcd}
\parbox[c]{5cm}{\centering \renewcommand{\baselinestretch}{1}\selectfont $\mathbb{K}<0$,  $\kappa_0<0$, or\\ $\kappa_p + \frac{(V\lambda)^2}{4} <0$ for some $p$}
\arrow[d, Rightarrow]  &  & \kappa_p \leq 0 \text{ for some $p$} \arrow[d, Rightarrow]\\
\text{Anosov}\arrow[r, Rightarrow] \arrow[d, Rightarrow]&  \parbox[c]{5cm}{\centering \renewcommand{\baselinestretch}{1}\selectfont no conjugate points + \\ transverse Green bundles} \arrow[r, Rightarrow] \arrow[d, Rightarrow] & \text{no conjugate points} \arrow[d, Leftrightarrow]\\ 
\text{ projectively Anosov}  & \arrow[l, Rightarrow]  \kappa_p < 0 \text{ for some $p$}   & \kappa_p = 0\text{ for some $p$}\\
\end{tikzcd}
\end{center}

This should be contrasted with the following diagram, which summarizes what was previously known in the geodesic setting.
\begin{center}
\begin{tikzcd}\label{eq:first-diagram}
K_g <0 \arrow[d, Rightarrow] &  &  K_g\leq 0 \arrow[d, Rightarrow]\\
\text{Anosov}\arrow[r, Leftrightarrow]  & \parbox[c]{5cm}{\centering \renewcommand{\baselinestretch}{1}\selectfont no conjugate points + \\ transverse Green bundles}\arrow[r, Rightarrow]  &  \text{no conjugate points.} 
\end{tikzcd}
\end{center}

\subsection{Remaining questions}

As noted in Remark \ref{rem:klingenberg}, it is not clear whether one needs the assumption that the thermostat is without conjugate points in Theorem \ref{theorem:characterization-dominated-splitting}. Given an Anosov flow on $SM$, \cite[Theorem A]{ghys84} tells us that it is topologically
orbit equivalent to the geodesic flow of any metric of constant negative Gaussian curvature on $M$. Therefore, these flows are transitive and their non-wandering set is all
of $SM$. This property ends up being critical in proving that there are no conjugate
points. In contrast, as we will see with concrete examples in  \S\ref{section:examples}, projectively Anosov thermostats
can have non-trivial wandering sets. 

\noindent \textit{Question.} Can a projectively Anosov thermostat have conjugate points?

One possible approach to this problem is to try to understand thermostats on the $2$-sphere $\mathbb{S}^2$. As pointed out above, it is easy to see using the exponential map that every such thermostat must have conjugate points. If one can construct an example which is projectively Anosov, then this would show that the projectively Anosov assumption is not enough to rule out conjugate points. However, it is not even clear whether there can be an arbitrary projectively Anosov flow on the unit tangent bundle of $\mathbb{S}^2$; the work of Arroyo and Rodríguez Hertz \cite{arroyo03} gives some insight into the problem, but it does not seem to be enough to rule out such examples.

\noindent \textit{Question.} Does the unit tangent bundle of $\mathbb{S}^2$ admit a projectively Anosov flow?

Another possible approach to this problem is to try to understand whether all projectively Anosov thermostats give rise to hyperbolic behavior. In \S\ref{section:examples}, we give explicit examples of projectively Anosov thermostats on the $2$-torus which are not Anosov, showing that this is not the case, and answering a question of Mettler and Paternain \cite{mettler19} about the existence of such systems. These examples critically rely on the fact that the surface is a $2$-torus and, thus, it may be possible that a projectively Anosov thermostat will always be Anosov if the surface is not the $2$-torus. In light of the previous question, one can try exploring the following question.

\noindent \textit{Question.} Is there a projectively Anosov thermostat on a surface of genus at least two which is not Anosov?

 Next, we suspect that the assumption $\kappa_0\leq 0$ is not required in Corollary \ref{cor:collapse}, and hence, the conjecture of Freire and Mañé \emph{does} extend to the setting of magnetic systems on surfaces. Provided the function $\lambda$ is sufficiently nice (that is, if the Mañé critical value is less than $1/2$), one can use Theorem \ref{theorem:hopf} and \cite[Theorem D, Proposition 5.4]{burns02} to deduce the result. It is not immediately obvious how to show that $\kappa_0 = 0$ if one only assumes that the topological entropy is zero and the genus of $M$ is at least two.



{\noindent \textit{Question.}} Let $(M,g,\lambda)$ be a magnetic system without conjugate points. Does $G_s^* = G_u^*$ everywhere imply $\kappa_0 = 0$?

Based on part (b) of Theorem \ref{theorem:collapse}, it is possible that the damped thermostat curvature is better equipped for detecting the topological entropy of the thermostat flow. However, even in the setting where $\tilde{\kappa} \leq 0$, it is not clear whether $\tilde{\kappa} = 0$ everywhere is equivalent to the Green bundles collapsing everywhere. 

{\noindent \textit{Question.}}
    Let $(M,g,\lambda)$ be a thermostat. Is it the case that  $\tilde{\kappa} = 0$ if and only if $G_s^*(v) = G_u^*(v)$ for all $v\in SM$?

\subsection{Organization of the paper}  In \S\ref{section:conjugate-points-and-green-bundles}, we study the relationship between conjugate points, Green bundles and thermostat curvature. We describe the lifted dynamics of the thermostat on the characteristic set in \S\ref{subsection:characteristic}. In \S\ref{subsection:cocycles},  we explain how our point of view is equivalent to the one using cocycles. The lifted dynamics give us an interpretation of the no-conjugate-points condition in terms of the twist property of the cohorizontal subbundle in \S\ref{subsection:conjugate}, unlocking Theorem \ref{theorem:non-positive-implies-no-conjugate}. Next, in \S\ref{subsection:green_bundles}, we construct the Green bundles, and prove Theorems \ref{theorem:no-conjugate-points-characterization}, \ref{theorem:hopf}, and  \ref{theorem:green-bundles-definition}. We finish the section by studying the relationship between the Lyapunov exponents of the flow and the Green bundles in \S\ref{subsec:lyapunov}, giving us the tools to prove Theorem \ref{theorem:collapse}.

In \S\ref{section:transversal_green_bundles}, we explore the relationship between the projectively Anosov property and transverse Green bundles. We first show that transverse Green bundles must be continuous and then use this property to prove Theorems \ref{theorem:characterization-dominated-splitting} and \ref{theorem:partial-converse}.

In \S\ref{section:examples}, we present a family of examples of thermostats on $\mathbb{T}^2$ without conjugate points that are projectively Anosov (yet are not Anosov), proving Theorem \ref{theorem:counterexample}. 

\section{Conjugate points and Green bundles}\label{section:conjugate-points-and-green-bundles}

In what follows, $(M, g)$ is a closed oriented Riemannian surface and we take an arbitrary $\lambda\in \mathcal{C}^\infty(SM, \R)$. 

\subsection{Dynamics on the characteristic set} \label{subsection:characteristic}
Recall that $(\alpha, \beta, \psi)$ is the global coframe for $T^*(SM)$ dual to the orthonormal frame $(X, H, V)$ under the Sasaki metric. By using the commutation formulas,
 $$[V,X]=H, \qquad [V,H]=-X,\qquad [X,H]=\pi^*(K_g)V,
$$
we can derive the structure equations,
$$d\alpha = \psi \wedge \beta, \qquad d\beta=-\psi\wedge \alpha, \qquad d\psi =-\pi^*(K_g)\alpha\wedge\beta.$$

We will use the adapted coframe $(\alpha, \beta, \psi_\lambda)$. See Figure \ref{figure:subbundles}. Combining the previous structure equations with Cartan's formula, we obtain 
\begin{equation*}\label{eq:structure-equations}
\mathcal{L}_{F}\alpha =\lambda \beta, \qquad 
\mathcal{L}_{F}\beta =\psi_\lambda, \qquad
\mathcal{L}_{F}\psi_\lambda = -\kappa_0 \beta+V(\lambda)\psi_\lambda,
\end{equation*}
where $\kappa_0$ is defined in \eqref{eq:definition-of-kappa-p} (with $p=0$). 

For any function $p: SM\to \R$ that is $\mathcal{C}^1$ along the flow, it will also be useful to define $\phi_p \coloneqq \psi_\lambda -p\beta$ so that $(\alpha, \beta, \phi_p)$ is an alternative coframe satisfying $\Sigma=\mathbb{H}^*\oplus \R\phi_p$ (see Figure \ref{figure:bases}). This is nothing but a change of coordinates on $\Sigma$. We now have
\begin{equation}\label{eq:structure-equations-2}
\mathcal{L}_{F}\beta =p\beta+\phi_p, \qquad
\mathcal{L}_{F}\phi_p = -\kappa_p \beta + (V\lambda - p)\phi_p.
\end{equation}
\begin{figure}[h]
\begin{center}
\includegraphics[scale=0.35]{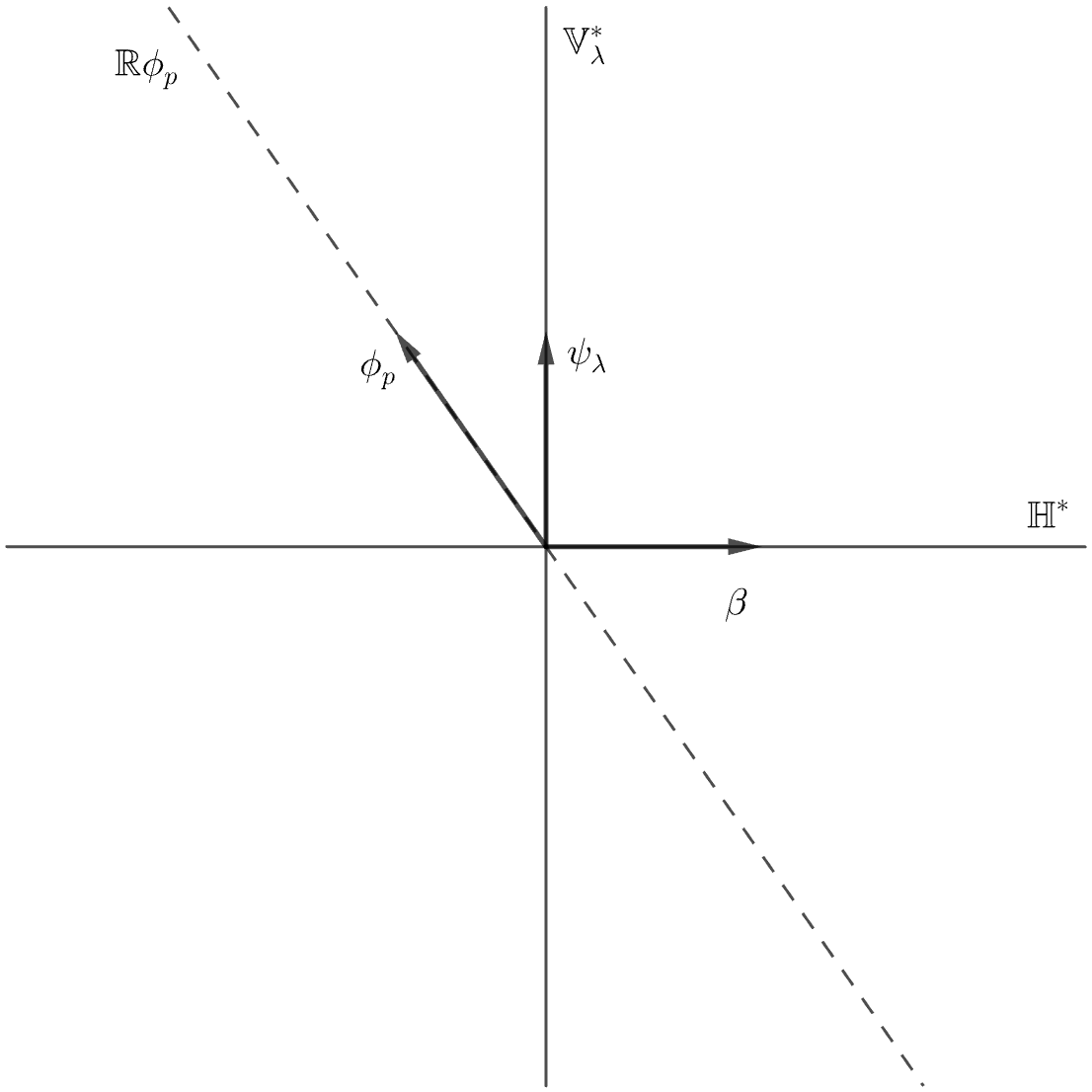}
\end{center}
\caption{The bases $(\beta, \psi_\lambda)$ and $(\beta, \phi_p)$ for $\Sigma$.}
\label{figure:bases}
\end{figure}

To each $\xi \in \Sigma(v)$, we associate the functions $x,y\in \mathcal{C}^\infty(\R, \R)$ characterized by
\begin{equation}\label{eq:definition-of-x-y}
    d_v\varphi_t^{-\top}(\xi)= x(t)\beta+y(t)\phi_p.
\end{equation}
They capture all the information of the lifted dynamics in $\Sigma$.

\begin{lemma}\label{lemma:key-lemma}
Let $\xi\in \Sigma(v)$. Along the orbit of $v$, we have the pair of equations
\begin{equation}\label{eq:flow-on-sigma}
    \begin{cases}
\dot{x}+px-\kappa_p y=0,\\
\dot{y}+x+(V\lambda-p)y = 0.
    \end{cases}
\end{equation}
In particular, the $y$ component always satisfies the Jacobi equation
\begin{equation}\label{eq:cotangent-jacobi}
    \ddot{y} + V(\lambda)\dot{y}+\mathbb{K}y=0.
\end{equation}
\end{lemma}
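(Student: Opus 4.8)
The plan is to reduce both assertions to the structure equations \eqref{eq:structure-equations-2} by dualizing the cotangent flow, and then to perform a short elimination to extract the Jacobi equation. First I would record that the coframe $\{\alpha,\beta,\phi_p\}$ is dual to the frame $\{F,\ H+pV,\ V\}$: this is a one-line check from $\alpha(X)=\beta(H)=\psi(V)=1$ (all other pairings vanishing) together with $\phi_p=\psi-\lambda\alpha-p\beta$. Since $\xi\in\Sigma(v)$ means exactly $\xi(F(v))=0$ (see \eqref{eq:defn_characteristic}) and the lift $\tilde\varphi_t$ in \eqref{eq:defn_symplectic_lift} preserves $\Sigma$, the covector $\xi(t):=d\varphi_t^{-\top}(v)\xi$ has vanishing $\alpha$-component, which is precisely the ansatz \eqref{eq:definition-of-x-y}; moreover $x(t)=\xi(t)\big((H+pV)|_{\varphi_t(v)}\big)$ and $y(t)=\xi(t)\big(V|_{\varphi_t(v)}\big)$.

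Next I would establish the dual-transport rule: from $d\varphi_t(v)^{\top}\circ d\varphi_t^{-\top}(v)=\mathrm{id}$ and the definition of the Lie derivative of a vector field, for every vector field $W$ on $SM$,
\begin{equation*}
\frac{\dd}{\dd t}\Big[\xi(t)\big(W_{\varphi_t(v)}\big)\Big]=\xi(t)\big([F,W]_{\varphi_t(v)}\big).
\end{equation*}
Writing $f_1=F$, $f_2=H+pV$, $f_3=V$, and $\mathcal{L}_F\omega^i=\sum_j A^i_j\,\omega^j$ in the coframe $\omega^1=\alpha,\ \omega^2=\beta,\ \omega^3=\phi_p$, the relation $\omega^i(f_j)\equiv\delta^i_j$ forces $\mathcal{L}_F f_j=-\sum_i A^i_j f_i$, so the components $c_j(t):=\xi(t)(f_j)$ evolve by $\dot c_j=-\sum_i A^i_j c_i$. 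Reading off $(A^i_j)$ from $\mathcal{L}_F\alpha=\lambda\beta$ and from \eqref{eq:structure-equations-2}, namely $\mathcal{L}_F\beta=p\beta+\phi_p$ and $\mathcal{L}_F\phi_p=-\kappa_p\beta+(V(\lambda)-p)\phi_p$, I get that the $\alpha$-component $c_1$ stays zero (re-proving $\tilde\varphi_t(\Sigma)=\Sigma$) and that $x=c_2$, $y=c_3$ satisfy $\dot x=-px+\kappa_p y$ and $\dot y=-x-(V(\lambda)-p)y$, which is exactly \eqref{eq:flow-on-sigma}.

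Finally, for \eqref{eq:cotangent-jacobi} I would solve the second equation of \eqref{eq:flow-on-sigma} for $x=-\dot y-(V(\lambda)-p)y$, differentiate along the flow (legitimate, since $p$ and hence $V(\lambda)-p$ is differentiable along the flow, with $t$-derivative $F(V(\lambda))-F(p)$), and substitute into the first equation to eliminate $x$. Collecting terms, the coefficient of $\dot y$ is $(V(\lambda)-p)+p=V(\lambda)$, while the coefficient of $y$ is $F(V(\lambda))-F(p)+p(V(\lambda)-p)+\kappa_p$; expanding $\kappa_p$ via \eqref{eq:definition-of-kappa-p} cancels the $F(p)$ and $p(p-V(\lambda))$ terms and leaves $\pi^*K_g-H(\lambda)+\lambda^2+F(V(\lambda))=\mathbb{K}$, giving $\ddot y+V(\lambda)\dot y+\mathbb{K}y=0$.

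The one point demanding care is the sign bookkeeping in the dual-transport rule — verifying that the coefficients of $\xi(t)=d\varphi_t^{-\top}(v)\xi$ in the coframe $\{\alpha,\beta,\phi_p\}$ are propagated by \emph{minus the transpose} of the matrix occurring in the $\mathcal{L}_F$-action on that coframe (equivalently, that $\tilde\varphi_t$ correctly dualizes Cartan's formula). Once this is pinned down, everything else is a mechanical computation with the definitions of $\kappa_p$ and $\mathbb{K}$.
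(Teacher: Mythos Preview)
Your proof is correct and essentially matches the paper's: both differentiate the defining relation for $(x(t),y(t))$ and reduce to the structure equations \eqref{eq:structure-equations-2}, with you working on the frame side (pairing $\xi(t)$ against the dual frame $\{F,\,H+pV,\,V\}$ via the transport rule $\tfrac{\dd}{\dd t}\xi(t)(W)=\xi(t)([F,W])$) while the paper stays on the coframe side by writing the constant covector $\xi=x(t)\varphi_t^*\beta+y(t)\varphi_t^*\phi_p$ at $v$ and differentiating using $\tfrac{\dd}{\dd t}\varphi_t^*=\varphi_t^*\mathcal{L}_F$, which avoids computing the dual frame. The elimination yielding \eqref{eq:cotangent-jacobi} is the same (the paper leaves that step implicit).
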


\begin{proof}
    By the definition of the inverse transpose $d_v\varphi_t^{-\top}$, we have
$$ \eta(t)\coloneqq d_v\varphi_t^{-\top}(\xi) \in T^*_{\varphi_t(v)}(SM)$$
if and only if 
\begin{equation*}
\xi = \eta(t)\circ d_v\varphi_t.
\end{equation*}
Therefore, we may write
$$\xi=x(t)\beta\circ  d_v\varphi_t+y(t)\phi_p\circ d_v\varphi_t=x(t)\varphi_t^*\beta+y(t)\varphi_t^*\phi_p.$$
Differentiating this identity with respect to $t$ and using the definition of the Lie derivative $\mathcal{L}_F$, we obtain
$$0=\dot{x}\varphi_t^*\beta+x\varphi_t^*(\mathcal{L}_F \beta)+\dot{y}\varphi_t^*\phi_p+y\varphi_t^*(\mathcal{L}_F \phi_p).$$
Using \eqref{eq:structure-equations-2}, we see that
$$0=(\dot{x}+px-\kappa_p y)\varphi_t^*\beta+(\dot{y}+x+(V\lambda-p)y)\varphi_t^*\phi_p.$$
Since $(\beta, \phi_p)$ are linearly independent, we get the pair of equations \eqref{eq:flow-on-sigma}, as desired. 
\end{proof}

\begin{remark} \label{rem:correspondence}
    Observe that this implies that $x$ is completely determined by $y$.
    
    

\end{remark}

    
    


For each $v\in SM$, it will also be useful to introduce the damping function 
\begin{equation}\label{eq:m-function}
    m(t)\coloneqq\exp\left(-\dfrac{1}{2}\int_0^t (V\lambda)(\varphi_\tau(v))\, d\tau\right).
\end{equation}
Indeed, it allows us to associate to each $\xi\in \Sigma(v)$ a new function $z\in \mathcal{C}^\infty(\R, \R)$ given by the relation
\begin{equation}\label{eq:dfn-damped-z}
z(t)\coloneqq\dfrac{y(t)}{m(t)}.
\end{equation}
We think of $z$ as a damped $y$ component. 

\begin{lemma}\label{lemma:normal-form}
    For each $\xi\in \Sigma(v)$, the $z$ component is a solution of the Jacobi equation
    \begin{equation}\label{eq:normalized-jacobi}
        \ddot{z}+
        \tilde{\kappa} z = 0
    \end{equation}
    along the orbit of $v$, where the quantity $\tilde{\kappa}$ is defined in \eqref{eq:definition-of-damped-thermostat-curvature}.
\end{lemma}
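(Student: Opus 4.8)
The plan is to reduce the damped Jacobi equation \eqref{eq:cotangent-jacobi}, $\ddot{y} + V(\lambda)\dot{y} + \mathbb{K}y = 0$, to normal form by the classical device of absorbing the first-order term into an integrating factor — and the point is that the integrating factor is exactly $m(t)$. Since $m(t) = \exp\!\big(-\tfrac12\int_0^t V(\lambda)(\varphi_\tau(v))\,d\tau\big)$, we have $\dot m = -\tfrac12\,(V(\lambda)\circ\varphi_t(v))\,m$, so $m$ is the unique positive solution of $2\dot m + (V(\lambda)\circ\varphi_t(v))\,m = 0$ with $m(0)=1$. Throughout I abbreviate $V(\lambda)$ for $V(\lambda)(\varphi_t(v))$ and use that its $t$-derivative along the orbit is $F(V(\lambda))(\varphi_t(v))$.

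First I would substitute $y = m z$ into \eqref{eq:cotangent-jacobi}. Differentiating along the orbit gives $\dot y = \dot m\, z + m\, \dot z$ and $\ddot y = \ddot m\, z + 2\dot m\, \dot z + m\, \ddot z$, so that
\[
\ddot y + V(\lambda)\dot y + \mathbb{K}y
= m\,\ddot z + \big(2\dot m + V(\lambda) m\big)\dot z + \big(\ddot m + V(\lambda)\dot m + \mathbb{K} m\big) z .
\]
The coefficient of $\dot z$ vanishes identically by the defining property of $m$ — this is precisely why the factor $\tfrac12$ appears in \eqref{eq:m-function}. Dividing the remaining identity by $m > 0$ yields $\ddot z + \big(\mathbb{K} + \ddot m/m + V(\lambda)\,\dot m/m\big)z = 0$, and it only remains to identify the bracketed coefficient with $\tilde\kappa$.

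For that, I would use $\dot m/m = -\tfrac12 V(\lambda)$ and hence $\ddot m/m = -\tfrac12 F(V(\lambda)) + \tfrac14 (V(\lambda))^2$, so the bracket equals $\mathbb{K} - \tfrac12 F(V(\lambda)) - \tfrac14 (V(\lambda))^2$; substituting $\mathbb{K} = \pi^*K_g - H(\lambda) + \lambda^2 + F(V(\lambda))$ this is exactly the expression \eqref{eq:definition-of-damped-thermostat-curvature} for $\tilde\kappa$, completing the proof. There is no real obstacle here: it is a routine change of variables, and the only thing requiring care is the bookkeeping, namely keeping track of the fact that in \eqref{eq:cotangent-jacobi} the symbols $V(\lambda)$ and $\mathbb{K}$ denote functions pulled back along the orbit, so that $\tfrac{d}{dt}\big(V(\lambda)\circ\varphi_t(v)\big) = F(V(\lambda))\circ\varphi_t(v)$. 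One could alternatively bypass \eqref{eq:cotangent-jacobi} and run the computation directly from the first-order system \eqref{eq:flow-on-sigma}, but passing through the damped Jacobi equation is the cleanest route.
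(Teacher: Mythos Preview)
Your proof is correct and follows essentially the same route as the paper: substitute $y = mz$ into the damped Jacobi equation \eqref{eq:cotangent-jacobi} and simplify. The paper computes $\ddot y$ in two ways (once from $y=mz$, once from \eqref{eq:cotangent-jacobi}) and equates them, while you plug the product rule directly into \eqref{eq:cotangent-jacobi} and observe that the $\dot z$ coefficient $2\dot m + V(\lambda)m$ vanishes by construction; these are the same computation organized slightly differently, and your bookkeeping is accurate.
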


\begin{proof}
First, we use the fact that $y=mz$ to get
$$\dot{y}=-\dfrac{V\lambda}{2}y+m\dot{z}.$$
Taking a second derivative, we obtain
\begin{equation}\label{eq:derivative-of-y}
    \begin{alignedat}{1}
\ddot{y}
&=-\dfrac{FV\lambda}{2}y-\dfrac{V\lambda}{2}\left(-\dfrac{V\lambda}{2}y+m\dot{z}\right)-\dfrac{V\lambda}{2}m\dot{z}+m\ddot{z}\\
&=m\left(-\dfrac{FV\lambda}{2}z+\dfrac{(V\lambda)^2}{4}z-V(\lambda)\dot{z}+\ddot{z}\right).\\
    \end{alignedat}
\end{equation}
However, the Jacobi equation \eqref{eq:cotangent-jacobi} yields
\begin{equation*}
    \begin{alignedat}{1}
    \ddot{y}&=-V(\lambda)\dot{y}-(\kappa_0+FV\lambda)y\\
    &=m\left(\dfrac{(V\lambda)^2}{2}z-V(\lambda)\dot{z}-(\kappa_0+FV\lambda)z\right).\\
    \end{alignedat}
\end{equation*}
Setting them equal to each other then gives us the claim since $m$ is nowhere-vanishing.
\end{proof}


One of the advantages of studying the damped component $z$ defined in \eqref{eq:dfn-damped-z} instead of $y$ is that, thanks to Lemma \ref{lemma:normal-form}, we are able to use the following general result when there are no conjugate points.

\begin{lemma}\label{lemma:generic-jacobi}
Let $k\in \mathcal{C}^\infty(\R)$ be such that any non-trivial solution $z$ to the equation
\begin{equation}\label{eq:general-jacobi-equation}
    \ddot{z}+kz=0
\end{equation}
vanishes at most once. If such $z$ vanishes once, then $|z(t)|$ is unbounded as $t\to\pm\infty$.
\end{lemma}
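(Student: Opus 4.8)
The plan is to exploit the Sturm-theoretic consequences of the hypothesis and then promote the non-oscillation into a growth statement via a Wronskian/reduction-of-order argument. The hypothesis ``any non-trivial solution of $\ddot z + kz = 0$ vanishes at most once'' is exactly the statement that the equation is disconjugate on every half-line, so the key is to build from a single solution $z_1$ that vanishes once the second, independent solution, understand its behavior, and deduce unboundedness of $z_1$.

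First I would fix a non-trivial solution $z$ with $z(t_0) = 0$ for some (unique) $t_0$; by shifting, assume $t_0 = 0$. Since $z$ is non-trivial, $\dot z(0) \neq 0$, and WLOG $\dot z(0) > 0$ (replace $z$ by $-z$), so $z > 0$ on $(0,\infty)$ and $z < 0$ on $(-\infty, 0)$ (it cannot vanish again). Now introduce a second solution $w$ with $w(0) = 1$, $\dot w(0) = 0$; then $w$ is independent of $z$, so by hypothesis $w$ vanishes at most once. The Wronskian $W := z \dot w - \dot z w$ is constant (the equation has no first-order term) and equals $-\dot z(0) w(0) = -\dot z(0) < 0$. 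The idea is then to consider the ratio $\rho := w / z$ on $(0,\infty)$: one computes $\dot\rho = (\dot w z - w \dot z)/z^2 = -W/z^2 = \dot z(0)/z^2 > 0$, so $\rho$ is strictly increasing on $(0,\infty)$.

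Next I would argue that $\rho(t) \to \rho_\infty \in (-\infty, +\infty]$ as $t\to\infty$. If $\rho_\infty < \infty$, then $w/z$ converges to a finite limit, which would force (choosing the constant $c = \rho_\infty$) the solution $w - c z$ to be $o(z)$ — in particular, since $w - cz$ is itself a solution and $z > 0$ on $(0,\infty)$, we can look at its ratio to $z$ tending to $0$; but $w - cz$ vanishes at most once by hypothesis, so it has an eventual sign, and then $\int^\infty \dot z(0)/z^2 \, dt = \rho_\infty - \rho(1) < \infty$ means $1/z^2$ is integrable at $+\infty$, which is inconsistent with $z$ being bounded — if $z \leq C$ on $(1,\infty)$ then $1/z^2 \geq 1/C^2$ is not integrable. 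Hence $z$ is unbounded as $t \to +\infty$. The symmetric argument on $(-\infty, 0)$, using that $\rho$ is increasing there as well and $z < 0$, gives $|z(t)| \to \infty$ as $t \to -\infty$: again $\int_{-\infty} 1/z^2 < \infty$ would be forced if $z$ were bounded near $-\infty$, a contradiction.

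The main obstacle — and the place requiring the most care — is pinning down that the relevant integral $\int^{\pm\infty} 1/z^2$ is actually finite under the disconjugacy hypothesis, i.e. that the ``principal/non-principal solution'' dichotomy genuinely applies. The clean way is: either $\int_1^\infty dt/z(t)^2 = \infty$, in which case $1/z^2$ fails to be integrable and hence $z$ cannot be bounded below away from $0$... but one must rule out $z$ oscillating toward $0$ without vanishing; here non-negativity of $z$ plus the ODE ($\ddot z = -kz$) and a convexity/concavity argument near any would-be local minimum controls this. Or $\int_1^\infty dt/z^2 < \infty$, in which case $\tilde w := z(t) \int_t^\infty ds/z(s)^2$ is a second solution that tends to $0$ and stays positive, so $z$ and $\tilde w$ are independent with $\tilde w$ never vanishing — consistent with the hypothesis — and from $\tilde w \to 0$ with $W(z,\tilde w) = 1$ one reads off $\dot{\tilde w} z - \tilde w \dot z = 1$, forcing $z \to \infty$ (if $z$ stayed bounded, $\tilde w \to 0$ would make the Wronskian tend to a limit involving $\dot{\tilde w} z$, and a short estimate shows this cannot stay equal to $1$ unless $z$ is unbounded). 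I would present the second horn as the main line since it directly produces the explicit second solution $\tilde w$; the first horn then only needs the easy observation that $1/z^2$ non-integrable is incompatible with boundedness.
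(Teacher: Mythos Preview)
Your overall strategy (Wronskian/reduction of order, and reducing to the convergence of $\int^\infty dt/z^2$) is exactly the right one and matches the paper's. There is also a sign slip --- with your convention $W=z\dot w-\dot z w$ one has $\dot\rho=W/z^2$, not $-W/z^2$, so $\rho=w/z$ is strictly \emph{decreasing} on $(0,\infty)$ (which is consistent with $\rho(0^+)=+\infty$). This is harmless.

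The genuine gap is your treatment of the case $\int_1^\infty dt/z(t)^2=\infty$. The implication you write --- ``$1/z^2$ fails to be integrable and hence $z$ cannot be bounded below away from $0$'' --- is false: take $z(t)=\sqrt t$ on $[1,\infty)$, which is bounded below by $1$ yet has $\int_1^\infty dt/t=\infty$. So the ``convexity/concavity'' hand-wave that follows is attacking a non-problem, and the actual difficulty (ruling out this horn, or proving unboundedness within it) is never addressed. More to the point, nothing in your argument on $(0,\infty)$ uses the disconjugacy hypothesis on $(-\infty,0)$, and that is precisely the input needed to exclude this case.

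The paper fixes this by choosing a better auxiliary solution. Rather than your $w$ with $w(0)=1,\ \dot w(0)=0$, take $z_{-1}$ determined by $z_{-1}(0)=1$ and $z_{-1}(-1)=0$. By the global disconjugacy hypothesis $z_{-1}$ cannot vanish again, so $z_{-1}>0$ on all of $(-1,\infty)$, and in particular $z_{-1}/z>0$ on $(0,\infty)$. The same Wronskian computation shows $z_{-1}/z$ is strictly decreasing there; being positive and decreasing, it has a finite limit, which forces $\int_1^\infty dt/z^2<\infty$. Now your clean observation (``if $z\le C$ then $1/z^2\ge 1/C^2$ is not integrable'') finishes the job. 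In other words, the missing idea is to let the second solution vanish on the \emph{other} side of $t=0$, so that the one-zero hypothesis supplies the sign information that bounds $\rho$ from below. Once you make that change, your outline and the paper's proof are essentially the same argument.
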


\begin{proof} By normalizing if needed, it suffices to consider the case where $z(0)=0$ and $\dot{z}(0)=1$. For each $t_0\neq 0$, we know thanks to the one-time vanishing property and the homogeneity of \eqref{eq:general-jacobi-equation} that there exists a unique solution $z_{t_0}$  with $z_{t_0}(0)=1$ and $z_{t_0}(t_0)=0.$

We claim that 
$$z_{t_0}=z_{-1}+\dfrac{z_{t_0}(-1)}{z(-1)}z.$$
Indeed, since both sides satisfy \eqref{eq:general-jacobi-equation} and agree at $t=-1$ and $t=0$, the one-time vanishing property tells us that they must agree for all $t\in \R$.

Differentiating with respect to $t$ and setting $t=0$, we obtain 
$$\dot{z}_{t_0}(0)=\dot{z}_{-1}(0)+\dfrac{z_{t_0}(-1)}{z(-1)}.$$
Now, let $t_0>0$. Since $z_{t_0}(-1)>0$ and $z(-1)<0$, we notice that $t_0\mapsto\dot{z}_{t_0}(0)$ is bounded above as $t_0\to \infty$. For $t>0$, note that the function
\begin{equation*}
    t\mapsto z(t)\int_t^{t_0} \dfrac{1}{(z(\tau))^2}\, d\tau
\end{equation*}
satisfies \eqref{eq:general-jacobi-equation}. Moreover, using a Taylor expansion, we can see that it tends to $1$ as $t\to 0$. Since it also agrees with the function $z_{t_0}$ at $t=t_0$, the one-time vanishing property yields
\begin{equation} \label{eq:z-int}
    z_{t_0}(t)=z(t)\int_t^{t_0} \dfrac{1}{(z(\tau))^2}\, d\tau
\end{equation}
for all $t\in \R$. It follows that, for $t_0>t_0'>0$, we have
\begin{equation}\label{eq:z-dot-diff}
    \dot{z}_{t_0}(0)-\dot{z}_{t_0'}(0)=\int_{t_0'}^{t_0}\dfrac{1}{(z(\tau))^2}\, d\tau >0,
\end{equation}
so $t_0\mapsto \dot{z}_{t_0}(0)$ is monotone increasing as $t_0\to \infty$. Combined with the previous upper bound, this implies that $t_0\mapsto\dot{z}_{t_0}(0)$ converges, so we may take the limit $t_0\to \infty$ in \eqref{eq:z-dot-diff} to obtain a convergent integral on the right-hand side. We conclude that $|z(t)|$ is unbounded as $t\to \infty$. The same argument works for $t\to -\infty$ if we instead take $t_0<0$.
\end{proof}

\subsection{Cocyles} \label{subsection:cocycles}
Using the global coframe $(\beta, \psi_\lambda)$ for the characteristic set $\Sigma$, we get an identification $\Sigma\cong M\times \R^2$. Therefore, for each $t\in \R$, we obtain a unique map $\Psi_t: SM\to \text{GL}(n,\R)$ characterized by
$$\tilde{\varphi}_t(v, \xi)=(\varphi_t(v), \Psi_t(v)\xi)$$
for all $(v, \xi)\in\Sigma\cong M\times \R^2$. The map $\Psi:SM\times \R\to \text{GL}(2,\R)$ satisfies the \emph{cocycle} property over the flow $\{\varphi_t\}_{t\in \R}$, that is,
$$\Psi_{t+s}(v)=\Psi_s(\varphi_t(v))\Psi_t(v)$$
for all $t, s\in \R$. This is simply a different point of view of the previous subsection, with the explicit relationship given by
$$\Psi_t(v)  \begin{pmatrix} x(0) \\ y(0) \end{pmatrix} = \begin{pmatrix} x(t) \\ y(t) \end{pmatrix},$$
where the $x$ and $y$ components satisfy the differential equations \eqref{eq:flow-on-sigma}. If we denote by $\Phi: SM\to \mathfrak{gl}(2, \R)$ the infinitesimal generator of $\{\Psi_t\}_{t\in \R}$, namely,
$$\Phi(v) \coloneqq \frac{d}{dt} \Big|_{t=0} \Psi_t(v),$$
then \eqref{eq:flow-on-sigma} with $p=0$ allows us to explicitly write
$$\Phi = \begin{pmatrix} 0 & \kappa_0 \\ -1 & -V\lambda \end{pmatrix}.$$

Of course, we could have made a different choice of global coframe on $\Sigma$. This is represented by a \emph{gauge}, that is, a smooth map $P:SM\to \text{GL}(2, \R)$, which gives rise to a new cocyle  over the flow $\{\varphi_t\}_{t\in \R}$ by conjugation:
$$\widetilde{\Psi}_t(v)=P^{-1}(\varphi_t(v))\Psi_t(v)P(v).$$
One can check that the new infinitesimal generator $\widetilde{\Phi}$ is related to $\Phi$ by
$$\widetilde{\Phi}=P^{-1}(\Phi  + F) P.$$
Under this lens, our previous choice of coframe $(\beta, \phi_p)$ corresponds to the gauge
$$P=\begin{pmatrix} 1 & 0 \\ -p & 1 \end{pmatrix},$$
and the infinitesimal generator of the new cocyle $\{\widetilde{\Psi}_t\}_{t\in \R}$ becomes
\begin{equation*}
\widetilde{\Phi} = \begin{pmatrix} -p & \kappa_p \\ -1 & p-V\lambda  \end{pmatrix}.
\end{equation*}
Note in particular that, for $p=V\lambda/2$, we may rewrite this as
    \begin{equation}\label{eq:split-of-cocycle}
        \widetilde{\Phi} = - \frac{V\lambda}{2} \text{Id} + \begin{pmatrix} 0 & \tilde{\kappa} \\ -1 & 0 \end{pmatrix}, 
    \end{equation}
    so that 
    \begin{equation}\label{eq:split-of-cocycle-exp}
        \widetilde{\Psi}_t(v) = m(t) \Gamma_t(v),
    \end{equation}
    where the damping function $m$ is defined in \eqref{eq:m-function} and $\{\Gamma_t\}_{t\in \R}$ is the cocycle generated by the last matrix in \eqref{eq:split-of-cocycle}. In light of Lemma \ref{lemma:normal-form}, we have 
    $$\Gamma_t(v)  \begin{pmatrix} x(0) \\ z(0) \end{pmatrix} = \begin{pmatrix} x(t) \\ z(t) \end{pmatrix}.$$
    Note that the infinitesimal generator of $\{\Gamma_t\}_{t\in \R}$ has trace zero, so $\Gamma_t: SM\to \text{SL}(2,\R)$. This is essentially the same cocyle as the one we would get from a geodesic flow with `Gaussian curvature' $\tilde{\kappa}$; however, note that $\tilde{\kappa}$ is a function on $SM$ instead of $M$. By picking the right gauge and damping the $y$ component in the cotangent bundle, we are hence reducing the problem to something that resembles the geodesic case. 



\subsection{Conjugate points} \label{subsection:conjugate}

In the geodesic case, the definition of conjugate points is often formulated in terms of a Jacobi equation. Let us restate our previous definition of conjugate points in terms of a Jacobi equation for thermostats.


\begin{lemma} \label{lem:conjugate_along}
    Let $\gamma : [0,T] \rightarrow M$ be a thermostat geodesic segment with distinct endpoints $x_0=\gamma(0)$ and $x_1=\gamma(T)$. The points $x_0$ and $x_1$ are conjugate along $\gamma$ if and only if there exists a non-trivial solution $y$ to the Jacobi equation \eqref{eq:cotangent-jacobi} satisfying $y(0) = y(T) = 0$.
\end{lemma}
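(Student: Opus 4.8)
The plan is to relate the singularity of $\exp_{x_0}^\lambda$ at $T\dot\gamma(0)$ to the existence of a nontrivial Jacobi field along $\gamma$ that vanishes at the endpoints, and then translate that Jacobi field into the $y$-component language of Lemma~\ref{lemma:key-lemma}. First I would recall that $d(\exp_{x_0}^\lambda)_{T\dot\gamma(0)}$ being singular means there is a nonzero tangent vector $w \in T_{T\dot\gamma(0)}(T_{x_0}M)$ killed by the differential. Such a $w$ corresponds to a variation of the initial condition $(x_0, v)$ through a curve in $S_{x_0}M$ (together with a rescaling of the ``time'' parameter), whose induced variation field $J(t) := \partial_s|_{s=0}\pi(\varphi_t(x_0,v_s))$ is the projection to $M$ of a variation of the orbit of $v := \dot\gamma(0)$ fixing the basepoint. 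The condition $d\varphi_t$ acting on the variation is exactly the linearized flow, so $J$ is governed by the linearization of $F = X + \lambda V$ on $SM$. The vanishing $J(0) = 0$ comes from fixing $x_0$, and $d(\exp^\lambda)_{T\dot\gamma(0)}(w) = 0$ gives $J(T) = 0$.

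Next I would make the correspondence with the $y$-component precise using the moving frame. A vertical–type perturbation of the orbit on $SM$ corresponds, under the identification from \S\ref{subsection:characteristic}, to a covector in $\Sigma$, and the scalar $y(t)$ in \eqref{eq:definition-of-x-y} measures precisely the ``horizontal displacement'' component — i.e., $y(t)$ vanishes exactly when the perturbed orbit projects to the same point on $M$ as $\gamma$ at time $t$. Concretely, one identifies the space of infinitesimal variations of the orbit with initial basepoint fixed with the solutions to \eqref{eq:flow-on-sigma} having $y(0) = 0$; among these, the genuinely nontrivial ones (not tangent to the flow direction) are parametrized by $\dot y(0) \neq 0$, and by Lemma~\ref{lemma:key-lemma} the $y$-component solves the Jacobi equation \eqref{eq:cotangent-jacobi}. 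Thus a nontrivial variation field with $J(0) = J(T) = 0$ exists if and only if there is a nontrivial solution $y$ of \eqref{eq:cotangent-jacobi} with $y(0) = y(T) = 0$. It remains to check that ``$\exp^\lambda_{x_0}$ singular at $T\dot\gamma(0)$'' is equivalent to the existence of such a variation field; this is where one must account for the two dimensions of $T_{x_0}M$ versus the one-dimensional family of directions in $S_{x_0}M$ plus the time rescaling, and verify that the radial direction (rescaling $v$) never contributes a kernel element for $T > 0$ since $\exp^\lambda_{x_0}$ is a diffeomorphism along rays. This reduces the kernel of $d(\exp^\lambda_{x_0})_{T\dot\gamma(0)}$ to exactly the angular variations, which are the ones producing $y$ with $y(0) = 0$.

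The main obstacle I expect is the careful bookkeeping of the differential of $\exp^\lambda$ at a point that is not the origin: unlike the Riemannian case, $\exp^\lambda$ is only $\mathcal{C}^1$ at $0$ and the homogeneity $\exp_x^\lambda(tv) = \pi(\varphi_t(x,v))$ holds for $t \geq 0$ with $v$ a \emph{unit} vector, so one should work on $T_{x_0}M \setminus \{0\}$ where everything is smooth, write $T_{x_0}M\setminus\{0\} \cong (0,\infty) \times S_{x_0}M$ in polar coordinates, and compute $d\exp^\lambda$ in this splitting. The radial derivative gives $\dot\gamma(T)$ (nonzero), and the angular derivative gives the Jacobi-type field; singularity of the full differential is then equivalent to the angular part being degenerate, i.e.\ to a nontrivial angular variation field vanishing at time $T$. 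I would cite \cite[Lemma A.7]{uhlmann2007} for the smoothness/homogeneity facts and otherwise carry out the frame computation directly. Once the dictionary ``angular Jacobi field $\leftrightarrow$ solution $y$ of \eqref{eq:cotangent-jacobi} with $y(0)=0$'' is established, the lemma follows immediately by adding the endpoint condition $y(T) = 0$.
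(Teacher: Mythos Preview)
Your overall strategy --- compute $d\exp_{x_0}^\lambda$ in polar coordinates and identify its kernel with angular variation fields vanishing at the endpoints --- is sound, and is essentially what \cite{assylbekov2014hopf} carries out. The paper takes a shortcut: it observes that $y(t)=0 \Leftrightarrow z(t)=0$ since the damping factor $m$ never vanishes, then cites \cite[Theorem~4.3]{assylbekov2014hopf} for the conjugate-point characterization, noting (via \cite[\S5]{assylbekov2014hopf}) that the tangent-bundle Jacobi equation there reduces to the \emph{same} normal form \eqref{eq:normalized-jacobi} as the cotangent equation here. So the paper's proof is really ``both sides share the same zeros because both damp to \eqref{eq:normalized-jacobi}''.

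Your proposal, however, has a genuine gap at the step where you invoke Lemma~\ref{lemma:key-lemma}. Variation fields of $\exp^\lambda$ live in the \emph{tangent} picture: they are governed by $d\varphi_t$ acting on $T(SM)$. But the $(x,y)$ of \eqref{eq:definition-of-x-y} and Lemma~\ref{lemma:key-lemma} are components of a \emph{covector} evolving under $d\varphi_t^{-\top}$ on $\Sigma\subset T^*(SM)$; the resulting Jacobi equation \eqref{eq:cotangent-jacobi} is $\ddot y + V(\lambda)\dot y + \mathbb{K}y=0$, whereas the tangent-side equation (the one your variation field actually satisfies) has the opposite sign on the first-order term and is \emph{not} \eqref{eq:cotangent-jacobi} --- this is precisely what the paper flags in its proof (``their Jacobi equation for $y$ is different from \eqref{eq:cotangent-jacobi} because they are working in the tangent bundle''). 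The Sasaki-metric identification of vectors with covectors does not intertwine $d\varphi_t$ with $d\varphi_t^{-\top}$ when $V(\lambda)\neq 0$, so you cannot simply import Lemma~\ref{lemma:key-lemma}. To close the gap you must either derive the tangent Jacobi equation directly and then show its zeros coincide with those of \eqref{eq:cotangent-jacobi} (both damp to \eqref{eq:normalized-jacobi} via nonvanishing factors, so this works), or bypass the issue by citing \cite{assylbekov2014hopf} as the paper does.
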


\begin{proof}
Since the function $m$ defined in \eqref{eq:m-function} is nowhere-vanishing, we have $y(t)=0$  if and only if $z(t)=0$. We can thus conclude by applying Lemma \ref{lemma:normal-form} and \cite[Theorem 4.3]{assylbekov14}. Indeed, while their Jacobi equation for $y$ is different from \eqref{eq:cotangent-jacobi} because they are working in the tangent bundle, the authors show in \cite[\S5]{assylbekov14} that a change of variables puts their Jacobi equation in the same normal form as \eqref{eq:normalized-jacobi}. 
\end{proof}

\begin{corollary}\label{corollary:conjugate-points-characterization}
    A thermostat has no conjugate points if and only if there are no non-trivial solutions to the Jacobi equation \eqref{eq:cotangent-jacobi} (or \eqref{eq:normalized-jacobi}) which vanish at two distinct points.
\end{corollary}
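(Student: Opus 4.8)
The plan is to unwind the definition of ``no conjugate points'' and reduce to Lemma~\ref{lem:conjugate_along}, using the translation invariance of the Jacobi equation along orbits. The thermostat has no conjugate points exactly when $\exp_x^\lambda$ is a local diffeomorphism for every $x\in M$; since $\exp_x^\lambda$ is $\mathcal{C}^1$ on $T_xM$, $\mathcal{C}^\infty$ away from the origin, and has differential the identity at the origin (a direct computation from \eqref{eq:exponential-map}), this is equivalent to asking that $d(\exp_x^\lambda)$ be invertible at $tv$ for all $x\in M$, all $v\in S_xM$, and all $t>0$, since every nonzero vector of $T_xM$ has this form. By the definition of conjugacy, $d(\exp_x^\lambda)$ is singular at $tv$ if and only if $\pi(v)$ and $\pi(\varphi_t(v))$ are conjugate along the (unit-speed) thermostat geodesic $s\mapsto\pi(\varphi_s(v))$, $s\in[0,t]$. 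Hence the thermostat has a pair of conjugate points if and only if there are $v\in SM$ and $t>0$ whose associated geodesic segment has conjugate endpoints.

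First I would apply Lemma~\ref{lem:conjugate_along} to each such segment: its endpoints are conjugate along it if and only if the Jacobi equation \eqref{eq:cotangent-jacobi} along the orbit of $v$ admits a non-trivial solution $y$ with $y(0)=y(t)=0$. Combining this with the previous paragraph, the thermostat has conjugate points if and only if, for some $v\in SM$, equation \eqref{eq:cotangent-jacobi} has a non-trivial solution vanishing at $0$ and at some $t>0$. To pass from this to the statement about solutions vanishing at \emph{any} two points, I would translate along the flow: the coefficients $s\mapsto V(\lambda)(\varphi_s(v))$ and $s\mapsto\mathbb{K}(\varphi_s(v))$ of \eqref{eq:cotangent-jacobi} are carried by $\varphi$, so a non-trivial solution along the orbit of $v$ vanishing at two times $t_1<t_2$ yields, via $s\mapsto y(t_1+s)$, a non-trivial solution along the orbit of $\varphi_{t_1}(v)$ vanishing at $0$ and at $t_2-t_1>0$; the reverse implication is immediate. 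Negating both sides then gives the characterization in terms of \eqref{eq:cotangent-jacobi}.

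Finally, the equivalence with \eqref{eq:normalized-jacobi} is immediate from Lemma~\ref{lemma:normal-form}: along any orbit, $y\mapsto y/m$ is a linear bijection between the two-dimensional solution spaces of \eqref{eq:cotangent-jacobi} and \eqref{eq:normalized-jacobi} --- the damping function $m$ in \eqref{eq:m-function} being smooth and nowhere vanishing --- and it preserves the zero set of each solution. I do not expect any real obstacle; the only thing to handle carefully is the bookkeeping of quantifiers together with the flow-invariance of the family of Jacobi equations, which is what lets one replace ``vanishing at $0$ and some $t>0$'' by ``vanishing at two points.''
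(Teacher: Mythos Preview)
Your proposal is correct and follows essentially the same approach as the paper, which treats the corollary as an immediate consequence of Lemma~\ref{lem:conjugate_along} without writing out an explicit proof. Your careful bookkeeping of the quantifiers, the translation along the flow, and the passage between \eqref{eq:cotangent-jacobi} and \eqref{eq:normalized-jacobi} via the nowhere-vanishing damping $m$ simply makes explicit the routine steps that the paper leaves to the reader.
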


\begin{remark}
    There is a nice geometric interpretation of what is going on in the cotangent bundle. Note that having $y(t)=0$ is equivalent to $d_v\varphi_t^{-\top}(\xi)\in \mathbb{H}^*(\varphi_t(v))$, so we get part (a) of Theorem \ref{theorem:green-bundles-definition}. See Figure \ref{figure:three-cases}(a). 
\end{remark}

Armed with this perspective on conjugate points, we prove Theorem \ref{theorem:non-positive-implies-no-conjugate}. 

\begin{proof}[Proof of Theorem \ref{theorem:non-positive-implies-no-conjugate}]
    Let $z$ be a non-trivial solution to the Jacobi equation \eqref{eq:normalized-jacobi} and define $$w \coloneqq \left(\dot{z} - \left(p-\dfrac{V\lambda}{2}\right)z\right)z.$$ By Lemma \ref{lemma:normal-form}, we have
    \begin{equation*}
        \begin{alignedat}{1}
            \dot{w}
            &=\dot{z}^2-2\left(p-\dfrac{V\lambda}{2}\right)\dot{z}z-\left(\tilde{\kappa}+F\left(p-\dfrac{V\lambda}{2}\right)\right)z^2\\
            &=\left(\dot{z}-\left(p-\dfrac{V\lambda}{2}\right)z\right)^2-\left(\tilde{\kappa}+F\left(p-\dfrac{V\lambda}{2}\right)+\left(p-\dfrac{V\lambda}{2}\right)^2\right)z^2\\
            &=\left(\dot{z}-\left(p-\dfrac{V\lambda}{2}\right)z\right)^2-\kappa_p z^2.
        \end{alignedat}
    \end{equation*}
    Since $\kappa_p \leq 0$ by assumption, we get $\dot{w}\geq 0,$
    so the function $w$ is non-decreasing. Suppose for contradiction, using Corollary \ref{corollary:conjugate-points-characterization}, that $z$ vanishes multiple times. Note that, because of the Jacobi equation \eqref{eq:normalized-jacobi}, if $z$ vanishes on an interval, then we must have $z = 0$ everywhere; thus, we may assume that $z$ vanishes on a discrete set. Let $t_0 \in \R$ be such that $z(t_0) = 0$ and let $t_1 \coloneqq \inf\{t > t_0 \ | \ z(t) = 0\}$. If $t_1 = t_0$, then we have an infinite sequence $t_n \to t_0^+$ such that $z(t_n) = 0$. By the mean value theorem, we get a sequence $t_n' \to t_0^+$ with $\dot{z}(t_n') = 0$ and hence, $\dot{z}(t_0) = z(t_0) = 0$ by continuity, forcing $z$ to be zero everywhere. Thus, we must have $t_1 > t_0$.
    
    By construction, $z$ does not vanish on the interval $(t_0, t_1)$. Since $w$ is non-decreasing and $w(t_0)=w(t_1)=0$, $w$ must vanish on this interval. Then, however, $z$ solves the first order differential equation $\dot{z}=(p-V\lambda/2)z$ on $(t_0,t_1)$ with $z(t_0) = z(t_1) = 0$; it is easy to see that this implies that $z = 0$ on the interval, which is a contradiction.
\end{proof}

\subsection{Green bundles} \label{subsection:green_bundles} Next, we want to show that having no conjugate points implies that the subbundle $d\varphi_t^{-\top}( \mathbb{H}^*(\varphi_{-t}(v)))$ converges as $t \rightarrow \pm \infty$. In this paper, when we talk about convergence of subbundles, we mean it in the sense that the subbundles converge in the Grassmannian topology of the projective bundle $\mathbb{P}(\Sigma)$ (also called the \textit{Grassmann $1$-plane bundle}) of the vector bundle $\Sigma\to SM$. That is, $\mathbb{P}(\Sigma)$ is the $4$-dimensional manifold obtained by projectivizing $\Sigma(v)$ for each $v\in SM$: the fiber over $v$ consists of all $1$-dimensional subspaces of $\Sigma(v)$. Note that both $\mathbb{V}^*$ and $\mathbb{H}^*$ define sections of this bundle. The flow $\{\tilde{\varphi}_t\}_{t\in \R}$ on $T^*(SM)$ naturally induces a flow on $\mathbb{P}(\Sigma)$, which we continue to denote by the same symbol.

\begin{proof}[Proof of Theorem \ref{theorem:green-bundles-definition}]
    Fix $v\in SM$. 
    We have already shown part (a), so in what follows we may assume that $t\mapsto \pi (\varphi_t(v))$ contains no conjugate points on $M$. Equivalently, any solution $z$ to the Jacobi equation \eqref{eq:normalized-jacobi} vanishes at most once. 

    Let $z_{t_0}$ be as in the proof of Lemma \ref{lemma:generic-jacobi}. Using Remark \ref{rem:correspondence}, we see that $z_{t_0}$ uniquely determines a point $\xi_{t_0} \in \Sigma(v)$ so that $\R \xi_{t_0} = d\varphi_{t_0}^{-\top}( \mathbb{H}^*(\varphi_{-t_0}(v)))$. Note that the proof of Lemma \ref{lemma:generic-jacobi} shows that $\dot{z}_{t_0}(0)$ converges as $t_0 \rightarrow \pm \infty$. Using the continuous dependence of solutions to \eqref{eq:normalized-jacobi} on initial conditions, we have that the functions $z_{t_0}$ converges as $t_0 \rightarrow \pm \infty$ to solutions $z_{\pm \infty}$ of \eqref{eq:normalized-jacobi} whose corresponding points $\xi_{\pm \infty} \in \Sigma(v)$ must span $\lim_{t_0 \rightarrow \pm \infty} d\varphi_{t_0}^{-\top}( \mathbb{H}^*(\varphi_{-t_0}(v)))$ and the result follows. The transversality condition \eqref{eq:h-transversality} is then a direct consequence of Lemma \ref{lemma:key-lemma} and the no-conjugate-points assumption.
\end{proof}

Thus, if a thermostat has no conjugate points, then for each $v\in SM$, we can define $G^*_{s/u}(v)\subset \Sigma(v)$ by the limiting procedures \eqref{eq:green-bundles-definition}. Thanks to the transversality condition \eqref{eq:h-transversality}, we see that for each Borel measurable function $p: SM\to \R$ that is smooth in the direction of the flow, there exist functions $r^s, r^u : SM \rightarrow \R$ so that 
\begin{equation}\label{eq:definition-of-r}
    r^{s/u} \beta + \phi_p \in G_{s/u}^*.
\end{equation} In general, these functions are Borel measurable. Still, as the next lemma shows, they satisfy a Riccati equation in the flow direction, in which they are always smooth.  

\begin{lemma}\label{lemma:riccati-equation}
Let $(M, g, \lambda)$ be a thermostat without conjugate points. For each Borel measurable function $p: SM\to \R$ smooth in the direction of the flow, the functions $r=r^{s/u}$ characterized by \eqref{eq:definition-of-r} satisfy the Riccati equation
\begin{equation}\label{eq:riccati-r-equation}
    r^2 + (V\lambda-2p)r + \kappa_p - Fr = 0.
\end{equation}

\end{lemma}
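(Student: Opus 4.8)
The plan is to exploit the flow-invariance of the Green bundles. Fix $v \in SM$ and a non-zero covector $\xi_0 \in G^*_{s/u}(v)$, and set $\xi(t) := d\varphi_t^{-\top}(v)\xi_0$. By the definition of the symplectic lift $\tilde{\varphi}_t$ and the invariance of $G^*_{s/u}$ along orbits established in Theorem \ref{theorem:green-bundles-definition}(b), we have $\xi(t) \in G^*_{s/u}(\varphi_t(v))$ for all $t$. Writing $\xi(t) = x(t)\beta + y(t)\phi_p$ as in \eqref{eq:definition-of-x-y}, the transversality condition \eqref{eq:h-transversality} applied along the orbit forces $y(t) \neq 0$ for every $t$: otherwise $\xi(t)$ would be a non-zero element of $G^*_{s/u}(\varphi_t(v)) \cap \mathbb{H}^*(\varphi_t(v))$. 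Hence, setting $r(t) := r^{s/u}(\varphi_t(v))$, the fact that $r^{s/u}\beta + \phi_p$ spans $G^*_{s/u}$ gives $x(t) = r(t)\, y(t)$; in particular $r(t) = x(t)/y(t)$ is smooth in $t$, confirming that $r^{s/u}$ is differentiable along the flow, and it suffices to derive an ODE for $r(t)$.

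Next I would feed $x = ry$ into the system \eqref{eq:flow-on-sigma}. The second equation gives $\dot{y} = -(r + V(\lambda) - p)y$, while the first gives $\dot{x} = (\kappa_p - pr)y$. On the other hand, differentiating $x = ry$ directly yields $\dot{x} = \dot{r}y + r\dot{y} = \bigl(\dot{r} - r^2 - r(V(\lambda) - p)\bigr)y$. Equating the two expressions for $\dot{x}$ and dividing by $y \neq 0$ gives $\dot{r} = r^2 + (V(\lambda) - 2p)r + \kappa_p$. Since $\dot{r}(t) = F(r)(\varphi_t(v))$, evaluating at $t = 0$ produces exactly \eqref{eq:riccati-r-equation} at the point $v$; as $v$ was arbitrary, the lemma follows.

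The computation itself is routine; the only point requiring care is the justification that the $\phi_p$-component $y(t)$ never vanishes, which is where the transversality of the Green bundles with $\mathbb{H}^*$ — itself a consequence of the no-conjugate-points hypothesis via Lemma \ref{lemma:key-lemma} — is essential, together with the flow-invariance of $G^*_{s/u}$. I would also note that the argument is verbatim the same for $G^*_s$ and $G^*_u$, since only invariance and transversality are used, so both $r^s$ and $r^u$ satisfy \eqref{eq:riccati-r-equation}.
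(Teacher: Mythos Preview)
Your argument is correct and follows essentially the same route as the paper: both proofs exploit the flow-invariance of $G^*_{s/u}$ together with the transversality $G^*_{s/u}\cap\mathbb{H}^*=\{0\}$ to derive the Riccati equation. The only difference is organizational: the paper pulls the moving section $r\beta+\phi_p$ back to a fixed fiber via $d\varphi_{-t}^{-\top}$ and differentiates there using the structure equations \eqref{eq:structure-equations-2} directly, whereas you push a fixed covector forward and quote the ODE system \eqref{eq:flow-on-sigma} from Lemma~\ref{lemma:key-lemma}, then read off the equation for the ratio $x/y$; since \eqref{eq:flow-on-sigma} was itself obtained from those same Lie derivatives, the two computations are equivalent.
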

\begin{proof} Let us write $G^*$ for either $G^*_{s}$ or $G^*_u$. For notational convenience, let us also fix $v \in SM$ and define $\phi_p(t) \coloneqq \phi_p(\varphi_t(v))$, $r(t) \coloneqq r(\varphi_t(v))$, and $\beta(t) \coloneqq \beta(\varphi_t(v))$. If $\eta(t)\coloneqq d_v\varphi_{-t}^{-\top}\big(r(t)\beta(t) + \phi_p(t)\big),$
then 
$\eta(t)\in G^*(v)$ for all $t\in \R$. Unraveling the definitions, this means that
$$d_v\varphi_{t}^{-\top}(\eta(t))=r(t)\beta(t)+\phi_p(t),$$
and therefore,
$$\eta(t)=r(t)\varphi_t^*\beta(t) +\varphi_t^*\phi_p(t).$$
Differentiating with respect to $t$ and setting $t=0$, we can use  \eqref{eq:structure-equations-2} to write
\begin{equation*}
    \begin{alignedat}{1}
\dot{\eta}(0) &=\dot{r}\beta+r\mathcal{L}_F\beta + \mathcal{L}_F\phi_p\\
& = (\dot{r} + r p - \kappa_p)\beta + (V\lambda - p + r)\phi_p.
    \end{alignedat}
\end{equation*}
Since $\eta(0)=r\beta + \phi_p$, we obtain
$$\dot{\eta}(0)-(V\lambda-p + r)\eta(0)=(\dot{r} - \kappa_p - r V\lambda+2rp- r^2)\beta.$$
The left-hand side belongs to $G^*$. We also know that $G^*$ is transverse to $\mathbb{H}^*$, so the right-hand side must be zero. The claim follows.
\end{proof}

We can now prove Theorem \ref{theorem:no-conjugate-points-characterization}.

\begin{proof}[Proof of Theorem \ref{theorem:no-conjugate-points-characterization}]
    One direction is given by Theorem \ref{theorem:non-positive-implies-no-conjugate}. In the other direction, suppose the thermostat has no conjugate points. Then, by picking $p=V\lambda$ in the Riccati equation \eqref{eq:riccati-r-equation}, Lemma \ref{lemma:riccati-equation} gives us a Borel measurable function $r$ smooth along the flow such that
$$r^2-rV\lambda + \kappa_0 + FV\lambda-Fr=0.$$
If we now define $p\coloneqq V\lambda-r$, we get
$$\kappa_p=\kappa_0 + Fp+p(p-V\lambda)=0,$$
as desired. 
\end{proof}

We also have the tools to prove Theorem \ref{theorem:hopf}.

\begin{proof}[Proof of Theorem \ref{theorem:hopf}]
Recall that $\text{div}_\mu F = V\lambda$. By Stokes's theorem, we have
    $$\int_{SM}F(r)\mu = -\int_{SM}rV(\lambda) \mu.$$
Integrating the Riccati equation \eqref{eq:riccati-r-equation}, we hence get
$$\int_{SM}(r^2+2(V\lambda-p)r)\mu =-\int_{SM}\kappa_p\mu.$$
It follows that
\begin{equation} \label{eqn:key}
    \begin{alignedat}{1}
\int_{SM}(\kappa_p - (p-V\lambda)^2) \mu &= -\int_{SM}(r^2+2(V\lambda-p)r+(p-V\lambda)^2) \mu \\
&=-\int_{SM}(r+V\lambda-p)^2\mu\leq 0.
    \end{alignedat}
\end{equation}
If the left-hand side is zero, then $r=p-V\lambda$ $\mu$-almost everywhere. Now, consider
\[ B_t \coloneqq \{v \in SM \ | \ r(\varphi_t(v)) =p(\varphi_t(v))- (V\lambda)(\varphi_t(v))\}, \qquad t\in \R.\]
Since the flow $\{\varphi_t\}_{t\in \R}$ is smooth, the Radon--Nikodym theorem implies that the subset $B_t$ has full measure for every $t \in \R$, and hence, the intersection $ B_0 \cap \big(\bigcap_{n \geq 1} B_{1/n}\big)$ also has full measure. Thus, we see that, for $\mu$-almost every $v \in SM$, we have
\begin{equation*}
\begin{alignedat}{1}
    (Fr)(v) &= \lim_{n \rightarrow \infty} n (r(\varphi_{1/n}(v)) - r(v)) \\
    &= \lim_{n \rightarrow \infty} n (p(\varphi_{1/n}(v))- (V\lambda)(\varphi_{1/n}(v)) - p(v) + (V\lambda)(v)) \\
    &= F(p-V\lambda)(v).
\end{alignedat}
\end{equation*}
Substituting this into the Riccati equation \eqref{eq:riccati-r-equation} yields $\mathbb{K}=0$ $\mu$-almost everywhere. Since $\mathbb{K}$ is a smooth function, we get $\mathbb{K}=0$ everywhere.
\end{proof}

In fact, a slight modification of this argument  yields the following, which will be useful in the proof of Theorem \ref{theorem:collapse}.

\begin{lemma} \label{lem:gen-hopf}
    Let $(M,g,\lambda)$ be a thermostat without conjugate points. For any finite flow-invariant Borel measure $\nu$ on $SM$, we have 
    \[ \int_{SM} 
    \tilde{\kappa}
    \, d\nu    
    \leq 0, \]
    with equality if and only if 
    $\tilde{\kappa} = 0$
    $\nu$-almost everywhere.
\end{lemma}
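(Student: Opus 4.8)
The plan is to rerun the argument of Theorem~\ref{theorem:hopf} in the special gauge $p = V(\lambda)/2$, for which $\kappa_p$ is exactly the damped thermostat curvature $\tilde{\kappa}$ (by the remark following \eqref{eq:definition-of-damped-thermostat-curvature}), replacing the use of Stokes' theorem and $\div_\mu F = V(\lambda)$ by the flow-invariance of $\nu$. Since the thermostat has no conjugate points, the stable Green bundle $G^*_s$ exists along every orbit (Theorem~\ref{theorem:green-bundles-definition}), and Lemma~\ref{lemma:riccati-equation} applied with $p = V(\lambda)/2$ yields a measurable function $r := r^s : SM \to \R$, smooth along the flow, with
$$F(r) = r^2 + \tilde{\kappa};$$
the point of this choice of gauge is precisely that the linear term $(V(\lambda) - 2p)r$ in \eqref{eq:riccati-r-equation} disappears. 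I would also record that $r$ is \emph{bounded}: along every orbit $r$ is a globally defined solution of a scalar Riccati equation $\dot{r} = r^2 + \tilde{\kappa}$, and any such solution satisfies $|r| \le \sqrt{\|\tilde{\kappa}\|_\infty}$ (otherwise it escapes to $\pm\infty$ in finite time), with $\|\tilde{\kappa}\|_\infty < \infty$ since $\tilde{\kappa}$ is continuous on the compact manifold $SM$.

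Next I would integrate $F(r) = r^2 + \tilde{\kappa}$ against $\nu$. Flow-invariance gives $\int_{SM} r\circ\varphi_t \, d\nu = \int_{SM} r\, d\nu$ for all $t$ (finite, as $r$ is bounded and $\nu$ is finite); differentiating at $t = 0$ and passing the derivative under the integral — legitimate by dominated convergence, since $\bigl|t^{-1}(r\circ\varphi_t - r)\bigr| = \bigl|t^{-1}\int_0^t F(r)\circ\varphi_s\, ds\bigr| \le \|F(r)\|_\infty$ — shows $\int_{SM} F(r)\, d\nu = 0$. Hence
$$\int_{SM} \tilde{\kappa}\, d\nu = -\int_{SM} r^2\, d\nu \le 0,$$
which is the desired inequality, and equality holds if and only if $r = 0$ $\nu$-almost everywhere.

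Finally, for the rigidity statement I would show that $r = 0$ $\nu$-a.e.\ forces $\tilde{\kappa} = 0$ $\nu$-a.e., adapting the measure-theoretic step of Theorem~\ref{theorem:hopf} — which is in fact cleaner here, as no Radon--Nikodym argument is needed once $\nu$ is genuinely invariant. Writing $N := \{r = 0\}$, invariance makes each $\varphi_{-1/n}(N)$ $\nu$-full, hence so is $B := N \cap \bigcap_{n \ge 1} \varphi_{-1/n}(N)$; for $v \in B$ we have $r(\varphi_{1/n}(v)) = r(v) = 0$, so $F(r)(v) = \lim_{n\to\infty} n\bigl(r(\varphi_{1/n}(v)) - r(v)\bigr) = 0$, and therefore $\tilde{\kappa}(v) = F(r)(v) - r(v)^2 = 0$ for $\nu$-a.e.\ $v$; the converse implication is trivial. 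The step I expect to be the main obstacle is the justification that $\int_{SM} F(r)\, d\nu = 0$: since $\nu$ may be singular and $r$ is only known to be measurable, this genuinely relies on the uniform boundedness of the Green Riccati solution noted above, and everything else is a routine transcription of the earlier arguments.
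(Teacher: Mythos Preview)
Your proposal is correct and follows essentially the same approach as the paper: choose $p=V(\lambda)/2$, integrate the Riccati equation \eqref{eq:riccati-r-equation} against $\nu$ to obtain $\int_{SM}\tilde\kappa\,d\nu=-\int_{SM}r^2\,d\nu\le0$, and then use the countable-intersection trick from Theorem~\ref{theorem:hopf} to pass from $r=0$ $\nu$-a.e.\ to $F(r)=0$ $\nu$-a.e. The paper simply asserts the integrated identity, whereas you supply the justification (boundedness of $r$ via the Riccati comparison of Lemma~\ref{lem:standard-ricc}, and dominated convergence to get $\int_{SM}F(r)\,d\nu=0$ from flow-invariance), so your write-up is in fact more complete than the paper's on this point.
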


\begin{proof}
    Taking $p = V\lambda/2$ and integrating the Riccati equation \eqref{eq:riccati-r-equation} with respect to the measure $\nu$, we get 
    \[ \int_{SM} 
    \tilde{\kappa}\,
    d\nu = - \int_{SM} r^2\, d\nu \leq 0.\]
    The left-hand side is zero if and only if $r$ is zero $\nu$-almost everywhere. With the same argument as in the proof of Theorem \ref{theorem:hopf}, we notice that having $r=0$ $\nu$-almost everywhere implies that $Fr=0$ $\nu$-almost everywhere. It then follows that $\tilde{\kappa}=0$ $\nu$-almost everywhere. 
\end{proof}

\subsection{Lyapunov exponents} \label{subsec:lyapunov}
Recall that the \emph{Lyapunov exponent} at $(v, \xi) \in \Sigma$ is
\begin{equation} \label{eq:definition-of-lyapunov} \chi(v, \xi) \coloneqq \limsup_{t \rightarrow \infty} \frac{1}{t} \ln \|d_v \varphi_t^{-\top}( \xi)\| ,\end{equation}
where $\Vert\cdot\Vert$ is any continuous metric norm on $\Sigma$. We use $\|d_v\varphi_t^{-\top}(\xi)\| \coloneqq |x(t)| + |y(t)|$ where  $x$ and $y$ are the adapted coordinates given by \eqref{eq:definition-of-x-y}. Let $u \coloneqq \dot{y}/y$ and $w \coloneqq \dot{z}/z$. Using the Jacobi equation \eqref{eq:normalized-jacobi} we see that, wherever $w$ is defined, it satisfies the Riccati equation
\begin{equation} \label{eq:damped-riccati} \dot{w} + w^2 + \tilde{\kappa}
= 0.\end{equation}
Furthermore, using \eqref{eq:derivative-of-y}, we get the relationship
\begin{equation} \label{eq:riccati-reln} u(t) = w(t) - \frac{1}{2}V(\lambda)(\varphi_t(v)).\end{equation}

We want to relate the exponential growth rate of $\|d_v\varphi_t^{-\top}(\xi)\|$ to the exponential growth rate of $|y(t)|$ in the case where $\xi \in G_{s/u}^*(v)$. For completeness, we recall the following standard Riccati comparison result (see, for example, \cite[Lemma 2.1]{green54} and the discussion that follows).

\begin{lemma} \label{lem:standard-ricc}
    Fix $\kappa \in \mathcal{C}^\infty(\R, \R)$ and let $w \in \mathcal{C}^1(\R, \R)$ solve the Riccati equation
    \[ \dot{w} + w^2 + \kappa = 0.\]
    If there is a constant $K > 0$ so that $\kappa \geq -K^2$, then $|w(t)| \leq K$ for all $t\in \R$.
\end{lemma}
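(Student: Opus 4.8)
The plan is to argue by contradiction via a comparison with the constant-coefficient Riccati equation $\dot v + v^2 - K^2 = 0$, whose solutions are explicit and whose only globally bounded orbits lie in $[-K,K]$.

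First I would suppose that $w(t_0) > K$ for some $t_0\in\R$. Since $\kappa \geq -K^2$, the equation gives $\dot w = -w^2 - \kappa \leq K^2 - w^2$, so $w$ is a subsolution of $\dot v = K^2 - v^2$. Let $v$ be the solution of $\dot v = K^2 - v^2$ with $v(t_0) = w(t_0)$; as $v(t_0) > K$ one has $v(t) = K\coth\!\big(K(t_0 - t) + \operatorname{arccoth}(w(t_0)/K)\big)$, which increases to $+\infty$ as $t$ decreases to a finite time $t_0 - T$ with $T>0$. A one-sided Grönwall comparison — writing $h = w - v$, observing $\dot h \leq v^2 - w^2 = -(w+v)h$ with $h(t_0) = 0$, and integrating \emph{backward} in time via the integrating factor $\exp\!\big(\int_{t_0}^t (w+v)\big)$ — yields $w(t) \geq v(t)$ on $(t_0 - T, t_0]$. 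Hence $w(t)\to+\infty$ as $t\to(t_0-T)^+$, contradicting $w\in\mathcal{C}^1(\R,\R)$.

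The case $w(t_0) < -K$ is symmetric: the same comparison run \emph{forward} gives $w(t) \leq v(t)$ for $t \geq t_0$, where now $v$ decreases to $-\infty$ in finite forward time, again contradicting that $w$ is globally defined. Combining the two cases gives $|w(t)|\leq K$ for all $t$.

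I expect no serious obstacle; this is the classical Riccati comparison, and the only points requiring care are (i) getting the direction of the one-sided comparison right — backward when $w$ is too large, forward when it is too small — and (ii) recalling that solutions of $\dot v = K^2 - v^2$ starting outside $[-K,K]$ escape to infinity in finite time on the appropriate side. As an alternative to the explicit formula for $v$, one can argue directly: if $w(t_0) > K$ then $\dot w \leq K^2 - w^2 < 0$ wherever $w > K$, so $w$ stays $>K$ and strictly decreasing for all $t\leq t_0$, hence is monotone as $t\to-\infty$; it cannot converge (a finite limit $L$ forces $\dot w\to 0$ while $\dot w \leq K^2 - L^2 < 0$), and once $w \geq \sqrt{2}K$ we have $\dot w \leq -\tfrac{1}{2}w^2$, which forces blow-up in finite backward time — the same contradiction.
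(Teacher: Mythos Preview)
Your argument is correct and is precisely the classical Riccati comparison; the paper does not actually prove this lemma but merely cites it as standard (referring to Green's 1954 paper), so there is nothing to compare against beyond noting that your approach is the expected one.

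One cosmetic slip: the explicit formula you wrote for $v$, namely $K\coth\!\big(K(t_0-t)+\operatorname{arccoth}(w(t_0)/K)\big)$, solves $\dot v = v^2 - K^2$ rather than $\dot v = K^2 - v^2$, and as written it blows up in \emph{forward} time. The correct solution is $v(t)=K\coth\!\big(K(t-c)\big)$ with $c<t_0$, which does blow up as $t\to c^+$. This does not affect your proof, since the Gr\"onwall step only needs the qualitative fact that $v$ escapes to $+\infty$ in finite backward time, and your alternative direct argument (once $w\geq\sqrt{2}K$ one has $\dot w\leq -\tfrac12 w^2$, forcing finite-time blow-up backward) is self-contained and correct.
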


In particular, compactness of $SM$ and the above lemma imply that globally defined solutions to \eqref{eq:damped-riccati} are bounded. This gives us the ingredients to prove the following lemma.

\begin{lemma} \label{lem:x-y-bound}
Let $(M, g, \lambda)$ be a thermostat without conjugate points. If $\xi \in G_{s/u}^*(v)$, then there is a constant $C > 0$ such that $|x(t)| \leq C |y(t)|$ for all $t\in \R$.
\end{lemma}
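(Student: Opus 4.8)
The plan is to exploit the identity $x = -\dot y - (V(\lambda)-p)y$ coming from the second equation in Lemma~\ref{lemma:key-lemma}, together with the fact that for $\xi \in G^*_{s/u}(v)$ the quotient $u = \dot y / y$ is a globally defined solution of a Riccati equation. Concretely, pick the gauge $p = V(\lambda)/2$ and work with the damped component $z = y/m$, so that $w = \dot z/z$ solves \eqref{eq:damped-riccati}. The point is that $\xi \in G^*_{s/u}(v)$ forces $z$ (equivalently $y$) to vanish \emph{nowhere} along the orbit: if $z(t_0)=0$ for some $t_0$, then $d\varphi_{t_0}^{-\top}(v)\xi \in \mathbb{H}^*(\varphi_{t_0}(v))$, contradicting the transversality \eqref{eq:h-transversality} (which is flow-invariant, since $G^*_{s/u}$ is an invariant bundle). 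Hence $w = \dot z / z$ is defined for all $t \in \R$.

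Next I would invoke Lemma~\ref{lem:standard-ricc}: by compactness of $SM$ and smoothness of $\tilde\kappa$, there is $K>0$ with $\tilde\kappa \geq -K^2$, so the globally defined solution $w$ of \eqref{eq:damped-riccati} satisfies $|w(t)| \leq K$ for all $t$. Translating back via \eqref{eq:riccati-reln}, $u(t) = w(t) - \tfrac12 V(\lambda)(\varphi_t(v))$ is also bounded, say $|u(t)| \leq K'$ with $K' := K + \tfrac12 \|V(\lambda)\|_\infty$. Now from the second equation of \eqref{eq:flow-on-sigma} (with $p = V(\lambda)/2$, or indeed any fixed smooth $p$), we have
\[
    x = -\dot y - \Bigl(V(\lambda) - \tfrac{V(\lambda)}{2}\Bigr) y = -\dot y - \tfrac{V(\lambda)}{2}\, y,
\]
so $|x(t)| = \bigl| u(t) + \tfrac12 V(\lambda)(\varphi_t(v)) \bigr|\, |y(t)| \leq \bigl(K' + \tfrac12\|V(\lambda)\|_\infty\bigr) |y(t)|$. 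Taking $C := K' + \tfrac12 \|V(\lambda)\|_\infty$ gives the claim. (If one prefers the $\phi_p$ coordinates with general $p$, the same computation yields $x = -(V(\lambda)-p)y - py = -\dot y - (V(\lambda)-2p)y/1$ rearranged appropriately; the structure is identical and only the constant changes.)

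The only genuine subtlety — and the step I would flag as the crux — is the non-vanishing of $y$ along the whole orbit when $\xi \in G^*_{s/u}(v)$. This is where the no-conjugate-points hypothesis and Theorem~\ref{theorem:green-bundles-definition} are essential: the Green bundles were constructed precisely as limits of the cohorizontal bundle under the flow, and the transversality \eqref{eq:h-transversality} combined with invariance propagates to $G^*_{s/u}(\varphi_t(v)) \cap \mathbb{H}^*(\varphi_t(v)) = \{0\}$ for every $t$, which is exactly the statement that $y$ never vanishes. Once that is in hand, everything else is the routine Riccati bound above.
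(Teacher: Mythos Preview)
Your proof is correct and follows essentially the same route as the paper: use the transversality \eqref{eq:h-transversality} (propagated by invariance) to ensure $y$ never vanishes, express $x/y$ via the second equation of \eqref{eq:flow-on-sigma}, and bound the resulting $u = \dot y/y$ through the Riccati comparison Lemma~\ref{lem:standard-ricc} applied to $w$ together with \eqref{eq:riccati-reln}. (Your parenthetical remark about general $p$ is garbled --- the correct identity from \eqref{eq:flow-on-sigma} is simply $x/y = (p - V(\lambda)) - u$, which is what the paper writes --- but this does not affect your main argument with $p = V(\lambda)/2$.)
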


\begin{proof}
Since $\xi \in G_{s/u}^*(v)$, we know thanks to the transversality condition in Theorem \ref{theorem:green-bundles-definition} that $y$ never vanishes. Equations \eqref{eq:flow-on-sigma} then imply that 
    \[ \frac{x(t)}{y(t)} = (p-V\lambda)(\varphi_t(v)) - u(t). \]
    %
    Since $w$ is bounded, \eqref{eq:riccati-reln}, along with the fact that $V\lambda$ is bounded implies that $u$ is bounded. The claim follows since $p-V\lambda$ is also bounded along any orbit.
\end{proof}

The following dynamical criterion for the Green bundles will also be useful.


\begin{lemma}\label{lemma:bounded_implication}
    Let $(M,g,\lambda)$ be a thermostat without conjugate points and $\xi \in \Sigma(v)$. If the function $z$ is bounded for all $t\geq 0$ (respectively $t\leq 0$), then $\xi \in G^*_s(v)$ (respectively $\xi\in G^*_u(v)$). 
\end{lemma}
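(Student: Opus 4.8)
The plan is to exploit the one-time vanishing characterization of the no-conjugate-points condition together with the explicit formula \eqref{eq:z-int} from the proof of Lemma \ref{lemma:generic-jacobi}. Since the thermostat has no conjugate points, by Corollary \ref{corollary:conjugate-points-characterization} every non-trivial solution of \eqref{eq:normalized-jacobi} vanishes at most once. Fix $v \in SM$ and $\xi \in \Sigma(v)$, and let $z$ be the associated damped component, which solves \eqref{eq:normalized-jacobi} along the orbit of $v$ with $k = \tilde\kappa(\varphi_t(v))$. Assume $z$ is bounded for all $t \geq 0$; the case $t \leq 0$ is symmetric. I want to show $\R\xi = G^*_s(v)$, i.e. that $\xi$ (up to scale) is the limit $\lim_{t_0 \to \infty} d\varphi_{-t_0}^{-\top}(\mathbb{H}^*(\varphi_{t_0}(v)))$.

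First I would recall from the proof of Theorem \ref{theorem:green-bundles-definition} that $G^*_s(v)$ is spanned by a point $\xi_{+\infty}$ whose damped component $z_{+\infty}$ is the limit of the solutions $z_{t_0}$ (normalized by $z_{t_0}(0) = 1$, $z_{t_0}(t_0) = 0$), and that by \eqref{eq:z-int} these satisfy $z_{t_0}(t) = z_s(t)\int_t^{t_0} z_s(\tau)^{-2}\,d\tau$ for a fixed reference solution $z_s$ vanishing at $0$; the key output of that argument was that $\int^{\infty} z_s(\tau)^{-2}\,d\tau$ converges, which forces $|z_s(t)| \to \infty$ as $t \to \infty$. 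The strategy is now a dichotomy: given our bounded solution $z$, either $\R\xi$ equals $G^*_s(v)$ — and we are done — or $\R\xi$ is transversal to $G^*_s(v)$ in the two-dimensional space $\Sigma(v)$. In the latter case, since $\{z_{+\infty}, z_s\}$ (where $z_s$ is a solution vanishing at $t=0$, which is the other "extreme" direction) spans the solution space, I can write $z = a\, z_{+\infty} + b\, z_s$ with $b \neq 0$: indeed, if $z$ were a multiple of $z_{+\infty}$ we would be in the first case. But $z_s(t)$ is unbounded as $t \to +\infty$ by the argument above, while $z_{+\infty}$ — being the limit of the $z_{t_0}$, which can be shown to be bounded on $[0,\infty)$ via \eqref{eq:z-int} and the convergence of the integral — is bounded on $[0, \infty)$. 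Hence $z = a z_{+\infty} + b z_s$ with $b \neq 0$ is unbounded on $[0,\infty)$, contradicting our hypothesis. Therefore $\R\xi = G^*_s(v)$, so $\xi \in G^*_s(v)$.

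A cleaner way to organize the last step, avoiding any case analysis on whether $z$ vanishes, is the following. Let $z_s$ be \emph{a} non-trivial solution with $z_s(0)=0$ (so $z_s$ is unbounded on $[0,\infty)$, provided the orbit is not trivial in the relevant sense; if the orbit has a vanishing-free solution that is itself bounded the statement is immediate). The solution space of \eqref{eq:normalized-jacobi} is two-dimensional; I claim the bounded-on-$[0,\infty)$ solutions form a line, and that line is exactly the one whose corresponding point in $\Sigma(v)$ is $G^*_s(v)$. That the bounded solutions form at most a line follows because any two linearly independent solutions cannot both be bounded: their Wronskian is a nonzero constant, and if both were bounded on $[0,\infty)$ then writing one as $z$ and using \eqref{eq:z-int}-type reciprocal-integral formulas forces $\int^\infty z^{-2}$ to diverge (as $z$ stays bounded), whence the second solution $z(t)\int_0^t z(\tau)^{-2}\,d\tau$ is unbounded — contradiction. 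So the bounded solutions are a line; since $z_{+\infty}$ (spanning $G^*_s(v)$) is bounded on $[0,\infty)$, that line is $\R z_{+\infty}$, and $z$ bounded implies $z \in \R z_{+\infty}$, i.e. $\xi \in G^*_s(v)$.

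The main obstacle I anticipate is making rigorous the claim that the solution $z_{+\infty}$ spanning $G^*_s(v)$ is genuinely bounded on $[0,\infty)$, and more generally pinning down exactly which solutions are bounded; this requires carefully revisiting the reciprocal-integral identity \eqref{eq:z-int}, the convergence of $\int^\infty z_s^{-2}$ established in Lemma \ref{lemma:generic-jacobi}, and handling the degenerate situation where no solution vanishes at all (in which case $\tilde\kappa$-behavior must be used, or one observes directly that the limiting construction still produces the Green bundle). Once boundedness of $z_{+\infty}$ and unboundedness of a transversal companion solution are in hand, the lemma is immediate. I would also remark that, combined with Lemma \ref{lem:x-y-bound}, this shows $\xi \in G^*_s(v)$ if and only if the full norm $\|d\varphi_t^{-\top}(v)\xi\|$ does not grow too fast, which is the form in which this lemma feeds into the proof of Theorem \ref{theorem:collapse}.
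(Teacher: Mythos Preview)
Your plan is essentially the paper's, but you have replaced the key computation by a stronger claim that is not available at this point. The paper normalizes $z(0)=1$, writes $z = z_{t_0} + c_{t_0}\, w$ with $w(0)=0$, $\dot w(0)=1$, and computes $c_{t_0} = z(t)/w(t) - \int_t^{t_0} w(\tau)^{-2}\,d\tau$ for any fixed $t>0$ using \eqref{eq:z-int}. Letting $t_0\to\infty$ and then sending $t\to\infty$ (along a subsequence where the unbounded $w$ blows up) gives $\lim_{t_0\to\infty} c_{t_0}=0$, using only that $z$ is bounded and that $\int^\infty w^{-2}$ converges by the proof of Lemma \ref{lemma:generic-jacobi}. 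Your decomposition $z=a\,z_{+\infty}+b\,z_s$ is exactly the $t_0\to\infty$ limit of this (with $z_s=w$, $a=1$, $b=\lim c_{t_0}$); but to conclude $b=0$ you invoke ``$z_{+\infty}$ is bounded on $[0,\infty)$'', which is precisely the obstacle you flag and which the paper does \emph{not} establish here---indeed the converse implication is only proved later, in Lemma \ref{lem:dynamical_criteria}, and only under the extra hypothesis that the Green bundles are everywhere transversal. The fix is painless: you only need $z_{+\infty}(t)/w(t)\to 0$, and this is immediate from $z_{+\infty}(t)/w(t)=\int_t^\infty w^{-2}$. With that replacement your argument becomes the paper's verbatim.

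Your ``cleaner'' Wronskian variant has a separate gap. From $|z_1|\le M$ you correctly get $\int^t z_1^{-2}\to\infty$ and hence $|z_2/z_1|\to\infty$, but this does not force $|z_2|$ to be unbounded: nothing so far prevents $z_1$ from tending to $0$. So the claim that two linearly independent solutions cannot both be bounded on $[0,\infty)$ is not justified by the Wronskian alone. In fact that claim is a \emph{consequence} of the lemma (since $G^*_s(v)$ is one-dimensional), so invoking it here would be circular.
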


\begin{proof} Let $\xi \in \Sigma(v)$ and suppose $z$ is bounded for all $t\geq 0$. By normalizing, we may assume that $z(0) = 1$. Let $z_{t_0}$ and $w$ be the solutions to the Jacobi equation \eqref{eq:normalized-jacobi} satisfying $z_{t_0}(0) = 1$, $z_{t_0}(t_0) = 0$, $w(0) = 0$ and $\dot{w}(0) = 1$. There must be a family of constants $c_{t_0}\in \R$ such that $z = z_{t_0} + c_{t_0} w$, so it suffices to show that $c_{t_0} \rightarrow 0$ as $t_0 \rightarrow \infty$. To that end, observe that \eqref{eq:z-int} implies that, for any $t>0$, we have
    \[ \lim_{t_0 \rightarrow \infty} c_{t_0} = \lim_{t_0 \rightarrow \infty} \frac{z(t)-z_{t_0}(t)}{w(t)}  = \frac{z(t)}{w(t)} - \int_t^\infty \frac{1}{(w(\tau))^2}\, d\tau.\]
    Since $w$ is unbounded by Lemma \ref{lemma:generic-jacobi} and $z$ is bounded for all $t\geq 0$ by assumption, we get the desired conclusion by taking $t\to \infty$. The same argument with $t_0 \rightarrow -\infty$ gives us the claim if $z$ is bounded for all $t\leq 0$.
\end{proof}

For $\xi \in G_{s/u}^*(v)$, we can use Lemma \ref{lem:x-y-bound} and \eqref{eq:riccati-reln} to rewrite \eqref{eq:definition-of-lyapunov} as
\begin{equation} \label{eq:definition-of-lyapunov-2}
\begin{split}
    \chi(v,\xi) &= \limsup_{t \rightarrow \infty} \frac{1}{t} \ln|y(t)| \\
    &= \limsup_{t \rightarrow \infty} \frac{1}{t} \int_0^t u(\tau) \, d\tau \\
    &= \limsup_{t \rightarrow \infty} \frac{1}{t} \int_0^t \left(w(\tau) - \frac{1}{2} V(\lambda)(\varphi_\tau(v)) \right)\, d\tau.
    \end{split}
\end{equation}
Furthermore, for any $\xi \in G_{s/u}^*(v) \setminus \{0\}$, it is clear that $\chi(v, \xi)=\chi(v, \pm \xi/\Vert \xi\Vert)$. We write $\chi^{s/u}(v) \coloneqq \chi(v,\xi)$ for $\xi \in G_{s/u}^*(v) \setminus \{0\}$.

Let $\nu$ be a Borel measure on $SM$ which is ergodic for the flow. The Oseledets theorem \cite{osedelets68} says that the limit \eqref{eq:definition-of-lyapunov} exists $\nu$-almost everywhere. Furthermore, we have a splitting $\Sigma(v) = E_0(v)\oplus E_-(v) \oplus E_+(v)$ for $\nu$-almost every $v \in SM$, where
\[E_0(v) \coloneqq \{\xi \in \Sigma(v) \ | \ \chi(v,\xi) = 0\}  \qquad  \text{ and } \qquad E_{\pm}(v) \coloneqq \{\xi \in \Sigma(v) \ | \ \mp \chi(v,\xi) < 0\}. \]
By Lemma \ref{lemma:bounded_implication}, we have the inclusions
\begin{equation} \label{eqn:lyapunov-inclusions} 
E_+ \subseteq G_u^* \subset E_0 \oplus E_+ \qquad \text{ and }\qquad  E_- \subseteq G_s^* \subset E_0 \oplus E_-.
\end{equation}
Note that since we are on a surface and $G_{s/u}^*(v)$ are $1$-dimensional subspaces, we either have $E_+(v) = G_u^*(v)$ or $E_+(v) = \{0\}$. This leads us to the following result.

\begin{lemma} \label{lemma:lyapunov-collapse}
Let $(M, g, \lambda)$ be a thermostat without conjugate points and let $\nu$ be a Borel ergodic measure on $SM$. We have $G_s^*(v) = G_u^*(v)$ $\nu$-almost everywhere if and only if $\chi^u(v) = \chi^s(v) = 0$ $\nu$-almost everywhere. Furthermore, if $G_s^*(v) = G_u^*(v)$ for all $v \in SM$, then the topological entropy of the thermostat flow is zero.
\end{lemma}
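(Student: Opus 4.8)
The plan is to exploit the Oseledets splitting $\Sigma(v) = E^0(v) \oplus E^-(v) \oplus E^+(v)$ together with the inclusions \eqref{eqn:lyapunov-inclusions} and the dimension count on a surface. Since $\Sigma(v)$ is $2$-dimensional and $G_s^*(v), G_u^*(v)$ are $1$-dimensional, for $\nu$-almost every $v$ exactly one of the following holds: either $E^+(v) = G_u^*(v)$ (so $\chi^u(v) > 0$) or $E^+(v) = \{0\}$ (so $\chi^u(v) \leq 0$), and similarly for $E^-(v)$ and $\chi^s(v)$. First I would argue the backward direction: if $\chi^u(v) = \chi^s(v) = 0$ $\nu$-a.e., then $E^+(v) = E^-(v) = \{0\}$, so $E^0(v) = \Sigma(v)$; by \eqref{eqn:lyapunov-inclusions} both $G_s^*(v)$ and $G_u^*(v)$ lie in $E^0(v)$, but this alone does not force them to coincide. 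To get the coincidence I would use Lemma \ref{lem:gen-hopf} applied to $\nu$ (or rather its ergodic components): the vanishing of all Lyapunov exponents means the cocycle $\Gamma_t$ from \eqref{eq:split-of-cocycle-exp} has zero exponents, hence by \eqref{eq:definition-of-lyapunov-2} we get $\int \tilde\kappa\, d\nu \geq 0$ from Jensen-type reasoning on the Riccati equation \eqref{eq:damped-riccati}; combined with $\int \tilde\kappa\, d\nu \leq 0$ from Lemma \ref{lem:gen-hopf}, we obtain $\tilde\kappa = 0$ $\nu$-a.e., and then the equality case of Lemma \ref{lem:gen-hopf} gives $r^{s/u} = 0$ $\nu$-a.e. in the gauge $p = V(\lambda)/2$, which by \eqref{eq:definition-of-r} means $G_s^*(v) = \R\phi_{V(\lambda)/2} = G_u^*(v)$ $\nu$-a.e.

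For the forward direction, suppose $G_s^*(v) = G_u^*(v)$ $\nu$-a.e. By \eqref{eqn:lyapunov-inclusions}, $E^+ \subseteq G_u^* = G_s^* \subseteq E^0 \oplus E^-$; intersecting, and using $E^+ \cap (E^0 \oplus E^-) = \{0\}$, we get $E^+ = \{0\}$, hence $\chi^u(v) \leq 0$ $\nu$-a.e., and symmetrically $\chi^s(v) \geq 0$ $\nu$-a.e. (the unstable exponent is nonpositive and the stable one nonnegative). To upgrade this to $\chi^u = \chi^s = 0$, I would observe that $G_s^* = G_u^*$ means the Riccati solutions $r^s = r^u$ agree, so the single function $r := r^{s/u}$ in the gauge $p = V(\lambda)/2$ satisfies \eqref{eq:riccati-r-equation}, which integrates (Lemma \ref{lem:gen-hopf}) to $\int \tilde\kappa\, d\nu = -\int r^2\, d\nu \leq 0$; meanwhile, from \eqref{eq:definition-of-lyapunov-2} and \eqref{eq:riccati-reln}, $\chi^{s/u}(v) = \lim \frac1t \int_0^t (w(\tau) - \tfrac12 V(\lambda))\,d\tau$ where $w = \dot z/z$ solves \eqref{eq:damped-riccati}; since $w$ is bounded (Lemma \ref{lem:standard-ricc}) and the relation $u = w - \tfrac12 V(\lambda) = x/y \cdot(-1) + \dots$ ties $w$ to $r$ along the Green bundle, a time-average of the Riccati equation \eqref{eq:damped-riccati} combined with $\int V(\lambda)\,d\nu = 0$ (invariance) forces $\chi^{s/u}(v) = 0$. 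Concretely: $\frac1t\int_0^t w^2 = -\frac1t\int_0^t \tilde\kappa - \frac{w(t)-w(0)}{t} \to -\int \tilde\kappa\,d\nu \geq 0$ by the ergodic theorem and boundedness of $w$, while also $\int \tilde\kappa\,d\nu \leq 0$; hence $\int\tilde\kappa\,d\nu = 0$, $w \to 0$ in $L^2$-average, and $\chi^{s/u}(v) = \lim\frac1t\int_0^t w(\tau)\,d\tau = 0$ by Cauchy–Schwarz.

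Finally, for the entropy statement: if $G_s^*(v) = G_u^*(v)$ for \emph{all} $v \in SM$, then for \emph{every} ergodic invariant measure $\nu$ the hypothesis holds, so by the first part all Lyapunov exponents of $\tilde\varphi_t$ on $\Sigma$ vanish $\nu$-a.e.; transferring back through the duality $d\varphi_t^{-\top}$ and the fact that $\R F$ carries zero exponent, every Lyapunov exponent of $d\varphi_t$ vanishes $\nu$-a.e. for every ergodic $\nu$. By the Ruelle inequality, $h_\nu(\varphi_1) \leq \int \sum \chi^+_i\, d\nu = 0$ for all invariant $\nu$, and the variational principle gives $h_{\mathrm{top}}(\varphi_1) = \sup_\nu h_\nu(\varphi_1) = 0$. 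I expect the main obstacle to be the forward direction: showing that the mere equality of the Green bundles (a codimension statement on the projectivized dynamics) forces the Lyapunov exponents to vanish rather than merely to be of one sign; the resolution is to feed the common Riccati solution $r$ into the integrated identity of Lemma \ref{lem:gen-hopf} and play it against the Birkhoff average computing $\chi^{s/u}$, using boundedness of $w$ (Lemma \ref{lem:standard-ricc}) and flow-invariance of $\nu$ to kill the $V(\lambda)$ term.
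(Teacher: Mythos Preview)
The paper dispatches the biconditional in one line: it follows from the inclusions \eqref{eqn:lyapunov-inclusions} together with the two-dimensional dichotomy recorded just before the lemma (either $E^+ = G_u^*$ or $E^+ = \{0\}$, and symmetrically for $E^-$). By contrast, you run a lengthy Riccati argument that is both unnecessary and flawed in places.

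For the forward direction, once you (correctly) deduce $E^+ = E^- = \{0\}$ from $G_s^* = G_u^*$ and the inclusions, you are already finished: $G_u^* \subseteq E^0 \oplus E^+ = E^0$ forces $\chi^u = 0$ outright, not merely $\chi^u \leq 0$, and likewise $\chi^s = 0$. No ``upgrade'' is required. Your upgrade moreover invokes the identity $\int_{SM} V(\lambda)\,d\nu = 0$, which is false for a general invariant measure $\nu$: the relation $V(\lambda) = \div_\mu F$ only yields vanishing against the Liouville form $\mu$, and $\mu$ is typically \emph{not} invariant for a dissipative thermostat. For the backward direction you rightly flag that $E^0 = \Sigma$ alone does not pin down $G_s^*$ and $G_u^*$ individually, but your proposed fix does not close the gap either: the ``Jensen-type'' step producing $\int\tilde\kappa\,d\nu \geq 0$ is never justified, and the remainder of your chain only reproduces the inequality $\int\tilde\kappa\,d\nu \leq 0$ from two different sources, which is no contradiction. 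Your entropy argument via Ruelle's inequality and the variational principle matches the paper's exactly.
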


\begin{proof}
The first statement follows immediately from the inclusions \eqref{eqn:lyapunov-inclusions}. Assume now that we have $G_s^*(v) = G_u^*(v)$ for every $v \in SM$. Using Ruelle's inequality \cite{ruelle78}, the metric entropy of the flow is zero with respect to any ergodic Borel measure and, hence, the metric entropy of any Borel invariant measure is zero using an ergodic decomposition argument (see, for example, \cite[Theorem 3.3.37]{fisher19}). The variational principle \cite[Theorem 4.3.7]{fisher19} implies that the topological entropy is zero.
\end{proof}






We now show that the Green bundles collapse to a line $\mu$-almost everywhere when the thermostat curvature vanishes everywhere. In the magnetic case, this collapsing happens everywhere and the converse also holds provided the curvature is non-positive.

\begin{proof}[Proof of Theorem \ref{theorem:collapse}]
    Suppose $\mathbb{K}=0$. By Theorem \ref{theorem:non-positive-implies-no-conjugate}, we know that the thermostat has no conjugate points. Using \eqref{eqn:key} with $p=V\lambda$, we get $r^{s/u}=0$ $\mu$-almost everywhere. It follows that $G^*_s=G^*_u=\R(\psi_\lambda-V(\lambda)\beta)$ $\mu$-almost everywhere.



If we pick $p=V\lambda$, then $\kappa_p=\mathbb{K}$ and \eqref{eq:flow-on-sigma} become 
\begin{equation*}
    \begin{cases}
\dot{x}+V(\lambda)x=0,\\
\dot{y}+x = 0.
    \end{cases}
\end{equation*}
We get the explicit solutions
\begin{equation}\label{eq:explicit-solutions}
    x(t)=\exp\left(-\int_0^tV(\lambda)(\varphi_\tau(v))\, d\tau\right)x(0), \qquad y(t)=y(0)-\int_0^tx(\tau)\, d\tau.
\end{equation}
In particular, when $V\lambda=0$, the solutions are $x(t)=x(0)$ and $y(t)=y(0)-t x(0)$. Therefore, if we start with $x(0)\neq 0$ and $y(0)=0$, which corresponds to a covector in $\mathbb{H}^*$,  we get
$$\lim_{t\to \pm \infty}\dfrac{x(t)}{y(t)}=\lim_{t\pm \infty}-\dfrac{1}{t}=0.$$
This tells us that the subbundle $d\varphi_{t}^{-\top}(\mathbb{H}^*(\varphi_{-t}(v)))$ converges to $\R\psi_\lambda$ as $t\to\pm \infty$. This holds everywhere, so we obtain $G^*_s=G^*_u=\R\psi_\lambda$, proving claim (a).

Next, observe that it suffices to prove claim (b) in the setting where $\nu$ is ergodic by taking an ergodic decomposition of the measure. Let $\nu$ be a Borel ergodic measure and let $\Delta \coloneqq \{v \in SM  \mid  \chi^u(v) > 0\}$. Whenever the limit exists, let
\[ \rho(v) \coloneqq \lim_{t \rightarrow \infty} \frac{1}{t} \int_0^t \tilde{\kappa}(\varphi_\tau(v))\,  d\tau.\]
By the Birkhoff ergodic theorem, this function is Borel measurable, constant $\nu$-almost everywhere, and 
\[ \int_{SM} \rho\,  d\nu  = \int_{SM} \tilde{\kappa}\, d\nu.\]
Thus, if we set $P^- \coloneqq \left\{ v \in SM \ | \ \rho(v) < 0\right\}$, then ergodicity implies that $\nu(P^-) = 1$ or $\nu(P^-) = 0$. Without loss of generality, we may assume that the limit \eqref{eq:definition-of-lyapunov-2} exists for every $v \in P^-$. Once we show that $P^- = \Delta$ $\nu$-almost everywhere, claim
 (b) will follow from Lemma \ref{lemma:lyapunov-collapse}. Indeed, if $\tilde{\kappa}(v) < 0$ for some $v \in SM$, then Lemma \ref{lem:gen-hopf} along with ergodicity implies that $\nu(P^-) = \nu(\Delta) = 1$ and, hence, $G_s^* \neq G_u^*$ $\nu$-almost everywhere. However, if $\tilde{\kappa} = 0$ $\nu$-almost everywhere, then we have $\nu(P^-) = \nu(\Delta) =0$ and, hence, $G_s^* = G_u^*$ $\nu$-almost everywhere. 

Integrating \eqref{eq:damped-riccati} from $\tau=0$ to $\tau=t$, we get
\[ \int_0^t (w(\tau))^2\,  d\tau + \int_0^t \tilde{\kappa}(\varphi_\tau(v)) \, d\tau + w(t) - w(0) = 0.\]
In particular, if $v \in P^-$ and $w$ corresponds to a covector $\xi \in G_u^*(v)$, then normalizing the above by $t$, taking the limit, and noting that $w$ is bounded for $t \geq 0$, we have
\[  \lim_{t \rightarrow \infty} \frac{1}{t} \int_0^t (w(\tau))^2\, d\tau = - \rho(v) > 0.\]
Since $\tilde{\kappa} \leq 0$, a comparison argument implies that $w \geq 0$ and a standard analysis lemma implies that $\chi^u(v) > 0$ (see \cite[Lemma 3.4.4]{barreira02}). However, if $v \in \Delta$, then the Cauchy--Schwarz inequality yields 
$$ \left(-\limsup_{t \rightarrow \infty} \frac{1}{t} \int_0^t \tilde{\kappa}(\varphi_\tau(v))\, d\tau\right)^{1/2} \geq \chi^u(v) > 0.$$
Thus, $\rho(v) > 0$ where the limit exists and, hence, $P^- = \Delta$ $\nu$-almost everywhere.\qedhere

\end{proof}

\section{Transverse Green bundles} \label{section:transversal_green_bundles}

The goal of this section is to prove Theorems \ref{theorem:characterization-dominated-splitting} and \ref{theorem:partial-converse}. The key property we need to show is that the Green bundles are continuous whenever they are transverse everywhere. For this, we adapt some of Eberlein's arguments in \cite{eberlein73} to this more general setting, giving a dynamical characterization of the stable and unstable Green bundles. We start with the following corollary of Lemma \ref{lemma:bounded_implication}.

\begin{corollary}\label{corollary:transversal-implication-1}
    Let $(M,g,\lambda)$ be a thermostat without conjugate points. Then, if $G_s^*(v) \cap G_u^*(v) = \{0\}$ for all $v \in SM$, there are no non-trivial bounded solutions to the Jacobi equation \eqref{eq:normalized-jacobi} for any $(v,\xi) \in \Sigma$.
\end{corollary}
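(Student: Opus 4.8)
The plan is to run Lemma~\ref{lemma:bounded_implication} in both time directions at once. Given $(v,\xi)\in\Sigma$ with a non-trivial bounded solution $z$ of \eqref{eq:normalized-jacobi}, I would first record that a solution bounded on all of $\R$ is, in particular, bounded for $t\geq 0$ and bounded for $t\leq 0$. Lemma~\ref{lemma:bounded_implication} then yields $\xi\in G^*_s(v)$ from the first observation and $\xi\in G^*_u(v)$ from the second, so $\xi\in G^*_s(v)\cap G^*_u(v)$.

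The next step is to invoke the hypothesis $G^*_s(v)\cap G^*_u(v)=\{0\}$ to conclude $\xi=0$, and then to note that this is incompatible with $z$ being non-trivial. For the latter I would use the correspondence of Lemma~\ref{lemma:key-lemma} and Remark~\ref{rem:correspondence}: if $\xi=0$ then the adapted coordinates of \eqref{eq:definition-of-x-y} satisfy $x\equiv y\equiv 0$, so $z=y/m\equiv 0$ since $m$ is nowhere vanishing; conversely, if $\xi\neq 0$ then $y\not\equiv 0$ (were $y\equiv 0$, the second equation of \eqref{eq:flow-on-sigma} would force $x\equiv 0$ as well, contradicting $\xi\neq 0$), hence $z\not\equiv 0$. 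This contradiction completes the argument.

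I do not anticipate any genuine obstacle: the statement is essentially a repackaging of Lemma~\ref{lemma:bounded_implication} together with the transversality assumption, and the only care needed is the elementary bookkeeping identifying a non-trivial solution of the Jacobi equation with a nonzero covector in $\Sigma(v)$.
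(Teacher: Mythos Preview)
Your proposal is correct and matches the paper's intent: the corollary is stated immediately after Lemma~\ref{lemma:bounded_implication} as a direct consequence, with no separate proof given, and your argument (apply the lemma in both time directions, then invoke transversality) is precisely the implicit reasoning. The extra bookkeeping you include about identifying $\xi\neq 0$ with $z\not\equiv 0$ is accurate and harmless.
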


The next step is to analyze the growth of solutions to \eqref{eq:normalized-jacobi} when they vanish, that is, when they correspond to vectors in the cohorizontal subbundle $\mathbb{H}^*$. 
\begin{lemma}\label{lemma:eberlein-3.7}
    Let $(M, g, \lambda)$ be a thermostat without conjugate points. Then, if we have $G_s^*(v) \cap G_u^*(v) = \{0\}$ for all $v \in SM$, there exists a constant $A>0$ such that if $z$ is a solution to \eqref{eq:normalized-jacobi} with $z(0)=0$, then $|z(t)|\geq A|z(\tau)|$ for all $t\geq \tau \geq 1$.
\end{lemma}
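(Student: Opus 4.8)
The plan is to argue by contradiction and extract, in a limit, a non-trivial solution of \eqref{eq:normalized-jacobi} that is bounded in a way the hypothesis forbids. Fix $K>0$ with $|\tilde{\kappa}|\le K^2$ on $SM$. For $v\in SM$ let $z^v$ be the solution of \eqref{eq:normalized-jacobi} along the orbit of $v$ with $z^v(0)=0$ and $\dot z^v(0)=1$; since there are no conjugate points, $z^v$ vanishes only at $0$ (Corollary \ref{corollary:conjugate-points-characterization}), so we may take $z^v>0$ on $(0,\infty)$, and it is enough to produce $A>0$ with $z^v(t)\ge A\,z^v(\tau)$ for all $t\ge\tau\ge 1$ and all $v$, i.e.\ to bound the possible ``dips'' of $z^v$ after time $1$.

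First I would record two uniform bounds on the quotient $w^v:=\dot z^v/z^v$ on $(0,\infty)$, each coming from a Riccati comparison via Lemma \ref{lem:standard-ricc}. For the lower bound, the stable Green bundle $G^*_s(v)$ is transversal to $\mathbb{H}^*$ by Theorem \ref{theorem:green-bundles-definition}, so its associated $z$-solution never vanishes and its quotient $w^s$ is a global solution of \eqref{eq:damped-riccati}, hence $|w^s|\le K$. Since $w^v$ also solves \eqref{eq:damped-riccati} along the same orbit and $w^v-w^s\to+\infty$ as $t\to 0^+$, the identity $\tfrac{d}{dt}(w^v-w^s)=-(w^v-w^s)(w^v+w^s)$ shows the sign of $w^v-w^s$ is preserved, so $w^v(t)>w^s(t)\ge -K$ for all $t>0$; in particular $z^v(t)/z^v(\tau)\ge e^{-K(t-\tau)}$ for $t\ge\tau>0$. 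For the upper bound, comparing $w^v$ with the barrier $K\coth(Kt)$ (which blows up at $0^+$ like $w^v$ and solves $\dot\phi=-\phi^2+K^2$) gives $w^v(t)\le K\coth(Kt)\le C_0:=K\coth K$ for $t\ge 1$.

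Now suppose the lemma fails: there are $v_n\in SM$ and $t_n>\tau_n\ge 1$ with $z^{v_n}(t_n)/z^{v_n}(\tau_n)\to 0$, and the lower bound forces $t_n-\tau_n\to\infty$. Let $c_n\in[1,t_n]$ be a point where $z^{v_n}$ attains its maximum on $[1,t_n]$, and rescale so that $z^{v_n}(c_n)=1$; then $z^{v_n}\le 1$ on $[1,t_n]$, $z^{v_n}(t_n)\le z^{v_n}(t_n)/z^{v_n}(\tau_n)\to 0$, and the lower bound again gives $t_n-c_n\to\infty$. The values $w^{v_n}(c_n)=\dot z^{v_n}(c_n)$ lie in $[-K,C_0]$, so after passing to a subsequence with $\varphi_{c_n}(v_n)\to w$, continuous dependence of solutions of \eqref{eq:normalized-jacobi} on the base point and initial data yields $\zeta_n(\cdot):=z^{v_n}(\cdot+c_n)\to\zeta$, a non-trivial solution along the orbit of $w$ with $\zeta(0)=1$ and $0\le\zeta\le 1$ on the limit of the intervals $[1-c_n,\,t_n-c_n]$. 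If $c_n\to\infty$ this limit is all of $\R$, so $\zeta$ is a non-trivial bounded solution, contradicting Corollary \ref{corollary:transversal-implication-1}. Otherwise $c_n\to c_\infty\in[1,\infty)$, so $0\le\zeta\le 1$ on $[1-c_\infty,\infty)\supseteq[0,\infty)$ and hence $\zeta\in G^*_s(w)$ by Lemma \ref{lemma:bounded_implication}; but $\zeta_n(-c_n)=z^{v_n}(0)=0$ gives $\zeta(-c_\infty)=0$ with $-c_\infty\le -1$, so the corresponding covector lies in $\mathbb{H}^*$ after applying $d\varphi_{-c_\infty}^{-\top}$, contradicting $G^*_s\cap\mathbb{H}^*=\{0\}$ from Theorem \ref{theorem:green-bundles-definition}. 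Either way we reach a contradiction.

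The step I expect to be the crux is this limiting argument: one must track carefully which half-lines the intervals $[1-c_n,t_n-c_n]$ cover, since the dichotomy ``$c_n\to\infty$'' versus ``$c_n$ bounded'' is precisely what decides whether the contradiction comes from the absence of bounded solutions (Corollary \ref{corollary:transversal-implication-1}, which is where the full transversality hypothesis enters) or from transversality of $G^*_s$ with $\mathbb{H}^*$. A minor wrinkle is that the maximum $c_n$ may occur at the left endpoint $t=1$, but the Riccati bounds still control $\dot z^{v_n}(1)$, so the compactness argument is unaffected.
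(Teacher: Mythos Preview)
Your proof is correct and follows the same overall scheme as the paper's: assume failure, recenter at a maximum of the normalized solution, pass to a limit by compactness, and reach a contradiction with the structural lemmas already in place. The differences are organizational. You take the maximum over $[1,t_n]$ rather than $[0,t_n]$, which sidesteps the paper's separate argument that $\inf_n u_n>0$; and your a priori Riccati lower bound $w^v>-K$ (obtained by comparing with the globally defined stable Riccati solution) forces $t_n-c_n\to\infty$ at the outset, collapsing the paper's four-way case split on the boundedness of $u_n$ and $t_n-u_n$ into just two cases on $c_n$. In the case $c_n\to c_\infty<\infty$ you extract the contradiction from $G_s^*\cap\mathbb{H}^*=\{0\}$ via Lemma~\ref{lemma:bounded_implication}, whereas the paper appeals to Lemma~\ref{lemma:generic-jacobi} directly; these are equivalent, since Lemma~\ref{lemma:bounded_implication} is itself built on Lemma~\ref{lemma:generic-jacobi}. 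The upper Riccati barrier $w^v\le K\coth(Kt)$ is correct (it follows from the Wronskian comparison with $\sinh(Kt)/K$) and plays the same role as the paper's invocation of Lemma~\ref{lem:standard-ricc} to bound $|\dot w_n(0)|$.
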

\begin{proof}
    We argue by contradiction. For each integer $n\geq 1$, pick a non-trivial solution $z_n$ to \eqref{eq:normalized-jacobi} and $t_n\geq \tau_n\geq 1$ such that $z_n(0)=0$ and $|z_n(t_n)|\leq (1/n)|z_n(\tau_n)|$. Multiplying by a constant if necessary, we can assume that $\dot{z}_n(0)=1$.
    
    For each $n\geq 1$, pick $u_n\in [0, t_n]$ such that $z_n(t)\leq z_n(u_n)$ for all $t\in [0,t_n]$. Let  $\delta\coloneqq\inf_{n\geq 1} u_n$. We must have $\delta>0$. If not, then $u_n\to 0$ up to picking a subsequence. However, then $\lim_{n\to \infty}z_n(u_n)=0$ by compactness of $SM$ and continuity. Nevertheless, since $t_n \geq 1$ for all $n$, we have $z_n(u_n)\geq z_n(1)$.  Given that $\inf_{n\geq 1} z_n(1)>0$ by compactness, this is a contradiction.

    Define $w_n(t)\coloneqq z_n(t+u_n)/z_n(u_n)$. Note that each function $w_n$ satisfies the Jacobi equation \eqref{eq:normalized-jacobi} and
\begin{equation*}
\begin{cases}
w_n(-u_n)=0,\\
w_n(0)=1,\\
w_n(t_n-u_n)\leq 1/n,\\
w_n(t)\leq 1\text{ for } -u_n\leq t\leq  t_n-u_n.
\end{cases}
\end{equation*}    
Using Lemma \ref{lem:standard-ricc}, we get that there is a uniform constant $K > 0$ so that $|\dot{w}_n(0)| \leq K$.
   Moreover, using compactness to work on a convergent subsequence, we have that $\lim_{n \rightarrow \infty} w_n = w$. This function $w$ also satisfies the Jacobi equation \eqref{eq:normalized-jacobi} and $w(0)=1$ by continuity, so it is a non-trivial solution. We now argue that we have a contradiction in every possible case.
    \begin{enumerate}
    \item[(1)] $\{t_n-u_n\}_{n\in \N}$ and $\{u_n\}_{n\in \N}$ both contain bounded subsequences. Up to picking subsequences, we have $t_n-u_n\to t$ and $u_n\to u$. By continuity, we obtain $w(-u)=0=w(t)=0$, but this is impossible since $-u\leq -\delta <0\leq t$. 
    
    \item[(2)] $u_n\to \infty$ and $\{t_n-u_n\}_{n\in \N}$ contains a bounded subsequence. Up to taking a subsequence, we have $t_n-u_n\to t$. By continuity, $w(t)=0$ and $w(\tau)\leq 1$ for all $\tau\leq t$. This contradicts  Lemma \ref{lemma:generic-jacobi}.
    
    \item[(3)]$t_n-u_n\to \infty$ and $\{u_n\}_{n\in \N}$ contains a bounded subsequence. We can use the same argument as in part (2).
    
    \item[(4)]$t_n-u_n\to \infty$ and $u_n\to \infty$. This implies that $|w(t)|\leq 1$ for all $t\in \R$ by continuity. This contradicts Corollary \ref{corollary:transversal-implication-1}. \qedhere
    \end{enumerate}
\end{proof}


We have mentioned above that not all thermostats are reversible. However, reversing the orbit of a thermostat does give rise to the orbit of a thermostat where the intensity is `flipped', that is, its mirrored thermostat.   More precisely, let $\mathcal{F} : TM \rightarrow TM$  be the  \emph{flip map} given by $(x,v) \mapsto (x,-v)$ and for $\lambda \in \mathcal{C}^\infty(SM)$, define $\lambda^\mathcal{F} \coloneqq - \lambda \circ \mathcal{F}$. Unraveling the definition, it is clear that if $\gamma$ is a thermostat geodesic for $(M,g,\lambda)$, then the curve $t \mapsto \gamma(-t)$ is a thermostat geodesic for $(M,g,  \lambda^\mathcal{F})$, with initial conditions $(\gamma(0), - \dot{\gamma}(0))$. Furthermore, reversing time and the initial velocity gives us a correspondence of solutions to the Jacobi equation \eqref{eq:normalized-jacobi} between the two thermostats. We summarize this observation with the following lemma.

\begin{lemma} \label{lem:pseudo_reversibility}
    Let $(M,g,\lambda)$ be a thermostat and let $\gamma$ be a thermostat geodesic. The curve $t \mapsto \gamma(-t)$ is a thermostat geodesic for $(M,g,  \lambda^\mathcal{F})$. Moreover, $z(t)$ is a solution to the Jacobi equation \eqref{eq:normalized-jacobi} along $t \mapsto \gamma(t)$ for $(M,g,\lambda)$ if and only if $z(-t)$ is also a solution to \eqref{eq:normalized-jacobi} along $t \mapsto \gamma(-t)$ for $(M,g,\lambda^\mathcal{F})$.
\end{lemma}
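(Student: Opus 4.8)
The plan is to verify both claims by unraveling the relevant definitions and checking that the ODEs transform correctly under the flip. First I would address the statement about the orbit: by definition, $\gamma$ is a thermostat geodesic for $(M,g,\lambda)$ means $\nabla_{\dot\gamma}\dot\gamma = \lambda(\gamma,\dot\gamma)J\dot\gamma$. Setting $\sigma(t):=\gamma(-t)$, we have $\dot\sigma(t)=-\dot\gamma(-t)$ and $\nabla_{\dot\sigma}\dot\sigma(t)=\nabla_{\dot\gamma}\dot\gamma(-t)$, so using the antilinearity bookkeeping $J(-\dot\gamma)=-J\dot\gamma$ and the definition $\lambda^{\mathcal F}=-\lambda\circ\mathcal F$, one gets $\nabla_{\dot\sigma}\dot\sigma(t) = \lambda(\gamma(-t),\dot\gamma(-t))J\dot\gamma(-t) = \bigl(-\lambda^{\mathcal F}(\sigma(t),\dot\sigma(t))\bigr)\bigl(-J\dot\sigma(t)\bigr) = \lambda^{\mathcal F}(\sigma(t),\dot\sigma(t))J\dot\sigma(t)$, which is exactly the thermostat geodesic equation for $(M,g,\lambda^{\mathcal F})$. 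This is a short direct computation; I would present it in a sentence or two.

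Next I would handle the statement about solutions to \eqref{eq:normalized-jacobi}. The cleanest route is to use that \eqref{eq:normalized-jacobi} reads $\ddot z + \tilde\kappa z = 0$, where $\tilde\kappa$ is evaluated along the orbit. The key algebraic fact is that the damped thermostat curvature is invariant under the flip in the appropriate sense: $\tilde\kappa^{\mathcal F}\circ\mathcal F = \tilde\kappa$, where $\tilde\kappa^{\mathcal F}$ denotes the damped thermostat curvature of $(M,g,\lambda^{\mathcal F})$. To see this, note that under $\mathcal F$ the geodesic vector field satisfies $X\mapsto -\mathcal F^*X$ (pulling back along the flip reverses $X$), while $V\mapsto \mathcal F^* V$ (vertical directions are preserved), so $F=X+\lambda V$ for the flipped system corresponds, along the reversed orbit, to $-F$ for the original system; meanwhile $H=[V,X]\mapsto -\mathcal F^*H$, $\pi^*K_g$ is $\mathcal F$-invariant since $K_g$ lives on $M$, and the terms $\lambda^2$, $F(V(\lambda))/2$, $(V(\lambda))^2/4$ in \eqref{eq:definition-of-damped-thermostat-curvature} are each even under the substitution $\lambda\mapsto\lambda^{\mathcal F}$, $F\mapsto -F$ (the odd number of sign flips in $F$ and $\lambda$ cancels, and the $H(\lambda)$ term picks up two sign flips). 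Hence $\tilde\kappa(\varphi_t(v)) = \tilde\kappa^{\mathcal F}(\varphi^{\mathcal F}_{-t}(\mathcal F v))$ for the flipped flow $\varphi^{\mathcal F}$.

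Granting that identity, the solution claim is immediate: if $z(t)$ solves $\ddot z(t) + \tilde\kappa(\varphi_t(v))z(t)=0$, set $\zeta(t):=z(-t)$; then $\ddot\zeta(t)=\ddot z(-t)=-\tilde\kappa(\varphi_{-t}(v))z(-t)=-\tilde\kappa^{\mathcal F}(\varphi^{\mathcal F}_t(\mathcal F v))\zeta(t)$, which is precisely \eqref{eq:normalized-jacobi} along $t\mapsto\gamma(-t)$ for $(M,g,\lambda^{\mathcal F})$; the converse follows by symmetry since applying the flip twice is the identity. Alternatively, and perhaps more in the spirit of the paper, one can avoid computing $\tilde\kappa^{\mathcal F}$ explicitly by instead working with the characterization of solutions to \eqref{eq:normalized-jacobi} as damped $y$-components of covectors in $\Sigma$ propagated by $\tilde\varphi_t$, and noting that the symplectic lift intertwines with the flip; but the curvature-invariance computation is the most self-contained.

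The main obstacle I anticipate is purely bookkeeping: keeping track of exactly which geometric objects ($X$, $V$, $H$, $\lambda$, $F$, $J$) pick up a sign under the flip and verifying that all the signs conspire so that $\tilde\kappa$ (and, in the background, the Jacobi equation \eqref{eq:cotangent-jacobi} and the normal form \eqref{eq:normalized-jacobi}) are genuinely preserved. None of this is deep, but a single dropped sign would invalidate the lemma, so I would be careful to organize the flip-transformation rules in one place before assembling them. Since the paper only needs the qualitative statement "$z(t)$ solves iff $z(-t)$ solves," it suffices to observe that the normal-form Jacobi equation \eqref{eq:normalized-jacobi} is genuinely an equation attached to an unparametrized orbit together with a choice of time direction, and reversing the time direction is exactly what the flip map accomplishes at the level of the dynamics.
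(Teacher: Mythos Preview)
Your proposal is correct and follows exactly the approach the paper indicates. The paper does not actually provide a proof of this lemma: it is stated as an observation, with the preceding paragraph remarking only that the first claim follows by ``unraveling the definition'' and that ``reversing time and the velocity gives us a correspondence of solutions to \eqref{eq:normalized-jacobi} between the two thermostats.'' Your computation of the thermostat geodesic equation for $\sigma(t)=\gamma(-t)$ and your verification that $\tilde\kappa^{\mathcal F}\circ\mathcal F=\tilde\kappa$ (whence the normal-form Jacobi equation transforms correctly under $t\mapsto -t$) supply precisely the details the paper leaves implicit.
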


With this, we now have the ingredients to give a dynamical characterization of the Green bundles.

\begin{lemma} \label{lem:dynamical_criteria}
    Let $(M,g,\lambda)$ be a thermostat without conjugate points and suppose $G_s^*(v) \cap G_u^*(v) = \{0\}$ for all $v \in SM$. We have $\xi \in G_s^*(v)$ (respectively $\xi\in G_u^*(v)$) if and only if the corresponding solution $z$ to the Jacobi equation \eqref{eq:normalized-jacobi} is bounded for all $t \geq 0$ (respectively $t \leq 0$). 
\end{lemma}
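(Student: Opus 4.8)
The plan is to prove the two implications separately, observing that one of them is already available. The direction asserting that if the solution $z$ of \eqref{eq:normalized-jacobi} associated to $\xi$ is bounded on $[0,\infty)$ (respectively on $(-\infty,0]$), then $\xi\in G_s^*(v)$ (respectively $\xi\in G_u^*(v)$), is exactly Lemma \ref{lemma:bounded_implication}, so nothing new is needed there. For the converse it suffices to treat $G_s^*$: since \eqref{eq:normalized-jacobi} has no first-order term, it is invariant under reversing the orbit, so the statement for $G_u^*$ will follow by running the $G_s^*$ argument with $[0,n]$ replaced by $[-n,0]$ (alternatively, one can invoke Lemma \ref{lem:pseudo_reversibility} after checking that $(M,g,\lambda^\mathcal{F})$ also has no conjugate points and transversal Green bundles).

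So fix $\xi\in G_s^*(v)$; I want to show its associated solution $z$ is bounded on $[0,\infty)$. First I would normalize: by the transversality \eqref{eq:h-transversality} we have $z(0)\neq 0$, so I may take $z(0)=1$. Next I would unwind the definition \eqref{eq:green-bundles-definition} of $G_s^*(v)$: the line $d\varphi_{-t}^{-\top}(\mathbb{H}^*(\varphi_t(v)))$ consists of the covectors $\eta$ at $v$ with $d\varphi_t^{-\top}(v)\eta\in\mathbb{H}^*(\varphi_t(v))$, i.e.\ (recalling that $y$ and $z$ vanish simultaneously by Lemma \ref{lemma:normal-form}) those whose associated solution of \eqref{eq:normalized-jacobi} vanishes at time $t$. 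Combining this with the continuous dependence argument already carried out in the proof of Theorem \ref{theorem:green-bundles-definition}, I get $z=\lim_{n\to\infty}z_n$ in $C^1_{\mathrm{loc}}$, where $z_n$ is the unique solution of \eqref{eq:normalized-jacobi} with $z_n(0)=1$ and $z_n(n)=0$ (existence and uniqueness coming from the absence of conjugate points). The key reduction is then that it is enough to bound $M:=\sup_n\sup_{t\in[0,n]}|z_n(t)|$, because then $|z(T)|=\lim_n|z_n(T)|\leq M$ for every $T\geq 0$.

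The heart of the proof is a rescaling/compactness contradiction. Suppose $M_n:=\sup_{[0,n]}|z_n|\to\infty$ along a subsequence, with the supremum attained at $u_n\in(0,n)$. Since $z_n\to z$ uniformly on compact sets and $z$ is finite, necessarily $u_n\to\infty$. I then rescale, setting $W_n(s):=z_n(s+u_n)/M_n$; each $W_n$ solves \eqref{eq:normalized-jacobi} along the orbit of $\varphi_{u_n}(v)$ and satisfies $|W_n|\leq 1$ on $[-u_n,\,n-u_n]$, $|W_n(0)|=1$, and $W_n(n-u_n)=0$. Using compactness of $SM$ together with continuous dependence of solutions of \eqref{eq:normalized-jacobi} on the base point and on initial data, I pass to a further subsequence so that $\varphi_{u_n}(v)\to v_\infty$ and $W_n\to W$ in $C^1_{\mathrm{loc}}$, with $W$ a non-trivial solution of \eqref{eq:normalized-jacobi} along the orbit of $v_\infty$. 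Then I split into cases. If $n-u_n$ has a bounded subsequence, say $n-u_n\to t_0\geq 0$, then $W(t_0)=0$ while $|W|\leq 1$ on $(-\infty,t_0]$; since the orbit of $v_\infty$ also has no conjugate points, Lemma \ref{lemma:generic-jacobi} applies and forces $|W|$ to be unbounded as $t\to-\infty$, a contradiction. If instead $n-u_n\to\infty$, then $|W|\leq 1$ on all of $\R$, contradicting Corollary \ref{corollary:transversal-implication-1}. Either way we reach a contradiction, so $M<\infty$ and $z$ is bounded on $[0,\infty)$.

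The part I expect to require the most care is this rescaling step: showing that the normalized, time-shifted solutions $W_n$ genuinely converge to a solution of \eqref{eq:normalized-jacobi} \emph{along an orbit of the flow}. This rests on compactness of $SM$ (so that the moving base points $\varphi_{u_n}(v)$ subconverge), continuity of $\tilde\kappa$ and of the flow (so that the coefficients of the ODEs converge locally uniformly), and a uniform $C^1$ bound on the $W_n$ near $0$ (from $|W_n|\leq 1$ on a fixed neighborhood of $0$ together with the ODE, via Arzelà--Ascoli). One must also track, in each case, exactly which half-line $W$ is bounded on, so that the hypotheses of Lemma \ref{lemma:generic-jacobi} and Corollary \ref{corollary:transversal-implication-1} are met verbatim. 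The remaining ingredients---the normalization, the reduction to bounding $\sup_n\sup_{[0,n]}|z_n|$, and the passage to the $G_u^*$ case---are routine.
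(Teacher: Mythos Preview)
Your argument is correct. Both proofs dispatch the ``bounded $\Rightarrow$ Green'' direction via Lemma~\ref{lemma:bounded_implication}, and both ultimately rest on the same rescaling/compactness dichotomy (Lemma~\ref{lemma:generic-jacobi} versus Corollary~\ref{corollary:transversal-implication-1}). The difference is in packaging. The paper does not run the compactness argument here at all: it has already been carried out in the proof of Lemma~\ref{lemma:eberlein-3.7}, which gives a \emph{uniform} growth estimate $|z(t)|\ge A|z(\tau)|$ for solutions vanishing at $0$. The paper then simply applies this estimate to the reversed thermostat $(M,g,\lambda^{\mathcal F})$ via Lemma~\ref{lem:pseudo_reversibility}, obtaining $|z_{t_0}(t)|\le 1/A$ for all $t_0\ge t+1$ in one line, and passes to the limit. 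Your route instead bypasses Lemma~\ref{lemma:eberlein-3.7} and redoes (a slightly streamlined version of) its proof inline---streamlined because $u_n\to\infty$ is forced by $z_n\to z$ locally uniformly, so two of the four cases in that lemma's proof disappear.

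What the paper's packaging buys is the explicit uniform constant: the bound $|z(t)|\le z(0)/A$ holds with the \emph{same} $A$ for every $v\in SM$ (Remark~\ref{rem:upper_bound}), and this uniformity is exactly what is used in the next step, Proposition~\ref{prop:continuity_green}, to prove continuity of the Green bundles. Your argument, as written, fixes $v$ and produces a bound that a priori depends on $v$; extracting a uniform constant would require a further contradiction argument over sequences $v_n$, which amounts to reproving Lemma~\ref{lemma:eberlein-3.7} anyway. So for the lemma as stated your proof is fine, but if you intend to continue to the continuity result you will want the uniform version.
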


\begin{proof}
Let $\xi \in G_s^*(v)$ correspond to the solution $z$ to \eqref{eq:normalized-jacobi} with $z(0) = 1$, and define $z_{t_0}$ as the solution to \eqref{eq:normalized-jacobi} with $z_{t_0}(0) = 1$ and $z_{t_0}(t_0) = 0$. If we set $u_{t_0}(t) \coloneqq z_{t_0}(-t)$, then $u_{t_0}$ is a solution to the Jacobi equation \eqref{eq:normalized-jacobi} along the orbit $t \mapsto \gamma(-t)$ for the thermostat $(M,g,\lambda^\mathcal{F})$. In particular, fixing $t \geq 0$ and considering $t_0 \geq 1 + t$, Lemmas \ref{lemma:eberlein-3.7} and \ref{lem:pseudo_reversibility} imply
\[ |z_{t_0}(t)| = |u_{t_0}(-t)| = |u_{t_0}(-t_0 + (t_0-t))| \leq \frac{1}{A}.\]
By taking $t_0 \rightarrow \infty$, we get $|z(t)| \leq 1/A$ for all $t \geq 0$. The converse is the content of Lemma  \ref{lemma:bounded_implication}.
\end{proof}

\begin{remark} \label{rem:upper_bound}
    Notice that the argument above shows that if $\xi \in G_s^*(v)$, then the corresponding solution $z$ to \eqref{eq:normalized-jacobi} satisfies $|z(t)| \leq z(0)/A$ for all $t \geq 0$, where $A$ is the uniform constant from Lemma \ref{lemma:eberlein-3.7}. 
\end{remark}

In general, one knows very little about the regularity of the Green bundles. We can now prove that if they are transverse everywhere, they must at least be continuous.

\begin{proposition} \label{prop:continuity_green}
    Let $(M,g,\lambda)$ be a thermostat without conjugate points. Then, if $G_s^*(v) \cap G_u^*(v) = \{0\}$ for all $v \in SM$, the Green bundles are continuous. 
\end{proposition}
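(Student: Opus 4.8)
The plan is to prove continuity of the Green bundles by combining the dynamical characterization from Lemma~\ref{lem:dynamical_criteria} with a compactness/normal-families argument, exactly in the spirit of Eberlein. First I would fix $v \in SM$ and a sequence $v_n \to v$, and pick covectors $\xi_n$ spanning $G^*_s(v_n)$, normalized so that the corresponding solutions $z_n$ to \eqref{eq:normalized-jacobi} along the orbit of $v_n$ satisfy $z_n(0) = 1$. By Lemma~\ref{lem:dynamical_criteria} (and the uniform bound recorded in Remark~\ref{rem:upper_bound}), we have $|z_n(t)| \leq 1/A$ for all $t \geq 0$, with $A$ the uniform constant from Lemma~\ref{lemma:eberlein-3.7}, which does not depend on the base point. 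The key point is that this bound is \emph{uniform in $n$}. Using Lemma~\ref{lem:standard-ricc} applied to $w_n := \dot z_n/z_n$ on intervals where $z_n$ does not vanish — or more simply, continuous dependence of solutions of the second-order linear ODE \eqref{eq:normalized-jacobi} on initial conditions and on the base orbit (the coefficient $\tilde\kappa$ varies continuously) — we see that $|\dot z_n(0)|$ is bounded: indeed $z_n(0) = 1$ is fixed, and if $|\dot z_n(0)|$ were unbounded then after renormalizing one would produce a solution starting at $0$ with derivative $1$ that stays bounded on $[0,\infty)$, contradicting Lemma~\ref{lemma:generic-jacobi}.

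Next I would extract a subsequence along which $\dot z_n(0)$ converges, say to some value $a$, and let $z_\infty$ be the solution to \eqref{eq:normalized-jacobi} along the orbit of $v$ with $z_\infty(0) = 1$, $\dot z_\infty(0) = a$. By continuous dependence of ODE solutions on initial conditions and on parameters (here the parameter is the orbit, and $\varphi_t$, $\tilde\kappa$ are smooth), $z_n \to z_\infty$ uniformly on compact time intervals; passing to the limit in the bound $|z_n(t)| \leq 1/A$ gives $|z_\infty(t)| \leq 1/A$ for all $t \geq 0$. Hence $z_\infty$ is bounded on $[0,\infty)$, so by Lemma~\ref{lemma:bounded_implication} the associated covector $\xi_\infty \in \Sigma(v)$ lies in $G^*_s(v)$. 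Since $z_\infty \not\equiv 0$, this covector is nonzero, so $\R\xi_\infty = G^*_s(v)$. Because every subsequence of $(\xi_n)$ has a further subsequence converging in $\Lambda(SM)$ to the \emph{same} limit $\R\xi_\infty = G^*_s(v)$ (the limit is forced, as $G^*_s(v)$ is one-dimensional), the whole sequence $G^*_s(v_n)$ converges to $G^*_s(v)$ in $\Lambda(SM)$. This proves continuity of $G^*_s$. For $G^*_u$, I would either repeat the argument with $t \leq 0$, or invoke Lemma~\ref{lem:pseudo_reversibility}: $G^*_u$ for $(M,g,\lambda)$ corresponds to $G^*_s$ for the reversed thermostat $(M,g,\lambda^{\mathcal F})$, which also has no conjugate points and transversal Green bundles, so the same argument applies.

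I expect the main obstacle to be the uniform control that makes the normal-families argument go through: specifically, verifying that the bound on $|z_n(t)|$ for $t \geq 0$ is genuinely uniform in the base point (which rests on the constant $A$ in Lemma~\ref{lemma:eberlein-3.7} being uniform — this is already built into that lemma's statement), and that one cannot have $|\dot z_n(0)|$ escaping to infinity. The latter is the delicate bookkeeping step: one must rule out a ``vertical'' limit, i.e.\ a situation where the normalized Green covectors $\xi_n$ degenerate toward $\mathbb{H}^*(v)$; this is precisely where the transversality hypothesis $G^*_s(v)\cap G^*_u(v) = \{0\}$ (via its consequences, Corollary~\ref{corollary:transversal-implication-1} and Lemma~\ref{lemma:generic-jacobi}) is used, since it forbids bounded solutions other than those coming from the Green bundles and forbids bounded solutions vanishing at a point. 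Once this is in place, everything else is a routine application of continuous dependence of solutions of linear ODEs on initial data and parameters, together with the one-dimensionality of the fibers of $\Lambda(SM)$, which makes subsequential limits unique and hence upgrades subsequential convergence to genuine convergence.
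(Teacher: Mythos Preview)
Your proposal is correct and follows essentially the same approach as the paper: extract a subsequential limit, use the uniform bound $|z_n(t)| \leq 1/A$ from Remark~\ref{rem:upper_bound} together with continuous dependence of solutions to show the limit solution is bounded for $t \geq 0$, and conclude via Lemma~\ref{lemma:bounded_implication}. The only tactical difference is that you bound $|\dot z_n(0)|$ directly (your use of Lemma~\ref{lem:standard-ricc} is clean here, since $z_n$ never vanishes by $G_s^* \cap \mathbb{H}^* = \{0\}$), whereas the paper invokes compactness in $\Lambda(SM)$ and then argues by contradiction that the limit $z$ cannot be unbounded.
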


\begin{proof} We introduce the notation $z(t ;v)$ to emphasize that we are dealing with a solution to the Jacobi equation \eqref{eq:normalized-jacobi} corresponding to some $\xi \in \Sigma(v)$, and similarly for $m(t ;v)$ and $y(t ;v)$.

Let $\{v_n\}_{n\in \N} \subset SM$ be a sequence such that $v_n \rightarrow v$. It suffices to show that $G_s^*(v_n) \rightarrow G_s^*(v)$. To that end, let $z_n(t;v_n)$ be a sequence of solutions to  \eqref{eq:normalized-jacobi} such that the corresponding covectors $\xi_n$ lie in $G_s^*(v_n)$. By normalizing if needed, we may use compactness to assume that $\xi_n \rightarrow \xi \in \Sigma(v)$. Further normalize so that $z_n(0;v_n) = 1$ for each $n\in \N$. In coordinates, this implies that $y_n(t;v_n) \rightarrow y(t;v)$ and $\dot{y}_n(t;v_n) \rightarrow \dot{y}(t;v)$ for every $t \in \R$. Let $z(t;v)$ be a solution to \eqref{eq:normalized-jacobi} corresponding to $\xi$. Suppose for the sake of contradiction that $z(t;v)$ is not bounded above for all $t \geq 0$. Observe that 
\[ \begin{split} |z(t;v) - z_n(t;v_n)| & \leq \frac{|m(t;v_n) y(t;v) - m(t;v) y_n(t;v_n)|}{m(t;v) m(t;v_n)} \\
& \leq \frac{|y(t;v) - y_n(t;v_n)|}{m(t;v_n)} + \frac{|m(t;v_n) - m(t;v)| |y(t;v)|}{m(t;v) m(t;v_n)},\end{split}\]
and for fixed $t$, the right-hand side tends to zero as $n\to\infty$. By choosing sufficiently large $t$, we have $z_n(t;v_n) \geq 1/A$ for sufficiently large $n$, contradicting Remark \ref{rem:upper_bound}.
\end{proof}

Finally, we prove Theorem \ref{theorem:partial-converse}. 

\begin{proof}[Proof of Theorem \ref{theorem:partial-converse}]
Let $p=0$ and define $w\coloneqq-(r^s+r^u)/2$, where $r^{s/u}$ are the functions defined by \eqref{eq:definition-of-r}. Since both functions $r^s$ and $r^u$ satisfy the Riccati equation \eqref{eq:riccati-r-equation}, we get
\begin{equation*}
\begin{alignedat}{1}
\kappa_w &= w^2-V(\lambda)w + \kappa_0 + Fw \\
&=\dfrac{\left(r^s\right)^2}{4}+\dfrac{\left(r^u\right)^2}{4}+\dfrac{r^s r^u}{2}-V(\lambda)w + \kappa_0 + Fw	\\
&=-\dfrac{(r^s)^2}{4}-\dfrac{(r^u)^2}{4}+\dfrac{r^s r^u }{2}\\
&=-\dfrac{1}{4}(r^s - r^u)^2.
\end{alignedat}
\end{equation*}
Since $r^s(v) \neq r^u(v)$ for all $v\in SM$ by the transversality condition, it follows that $\kappa_{w}< 0$. In terms of regularity, the functions $r^{s/u}$ are always smooth along the flow. However, thanks to Proposition \ref{prop:continuity_green}, we also know that they are continuous since the Green bundles are transverse everywhere by assumption.  
\end{proof}

This allows us to then prove our main result.

\begin{proof}[Proof of Theorem 
\ref{theorem:characterization-dominated-splitting}]
 Given the definitions of $E_s^*$ and $E_u^*$ in \eqref{eq:definition-cotangent-bundles}, we previously noted that $\Sigma=E_s^*\oplus E_u^*$. The estimates \eqref{eq:dominated-estimates} then tell us that $G^*_s=E_s^*$ and $G_u^*=E_u^*$. By Theorem \ref{theorem:partial-converse}, we know that there exists a continuous function $p: SM\to \R$, smooth along the flow, such that $\kappa_p <0$. We then apply \cite[Theorem 3.7]{mettler19}. 
\end{proof}

\section{Examples and counterexamples on the $2$-torus}\label{section:examples}

The following system illustrates that, when $V\lambda\neq 0$, it is possible to have $\mathbb{K}=0$ yet $G^*_{s}(v)\cap G^*_{u}(v)=\{0\}$ for some $v\in SM$. In particular, this shows that part (a) of Theorem \ref{theorem:collapse} is optimal. Since $S\mathbb{T}^2$ is a parallelizable manifold, each point $(x,v)\in S\mathbb{T}^2$ can be represented in coordinates $(x,\theta)$, where $x\in [0,1)^2$ and $\theta\in [0, 2\pi)$ is the angle such that $v=\cos(\theta) \partial_{x_1}+\sin(\theta)\partial_{x_2}$.

\begin{proposition} \label{propn:example}
Let $(\mathbb{T}^2, g)$ be the $2$-torus endowed with a flat Riemannian metric. Define $\lambda\in \mathcal{C}^\infty(S\mathbb{T}^2, \R)$ by $\lambda(x, \theta)\coloneqq\cos(\theta)$. 
Then, $\mathbb{K}=0$ and
\begin{equation*}
\begin{alignedat}{3}
&G^*_s(x,\pi/2)= \R(\psi_\lambda - \beta), \qquad && G^*_u(x,\pi/2)=\R\psi_\lambda,\\
& G^*_s(x,-\pi/2)= \R\psi_\lambda, \qquad 
&& G^*_u(x,-\pi/2)= \R(\psi_\lambda - \beta). \\
\end{alignedat}
\end{equation*}
\end{proposition}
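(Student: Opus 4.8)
The plan is to compute everything explicitly using the structure of the flat torus and the particularly simple form of $\lambda$. First I would set up coordinates: write points of $S\mathbb{T}^2$ as $(x,\theta)$, where $\theta$ is the angle the unit vector makes with the horizontal; on the flat torus the geodesic vector field is $X = \cos\theta\,\partial_{x_1} + \sin\theta\,\partial_{x_2}$, the vertical field is $V = \partial_\theta$, and $H = [V,X] = -\sin\theta\,\partial_{x_1}+\cos\theta\,\partial_{x_2}$. Since $K_g = 0$ and $\lambda = \cos\theta$, one computes $V(\lambda) = -\sin\theta$, $H(\lambda) = 0$ (as $\lambda$ depends only on $\theta$ and $H$ annihilates functions of $\theta$... actually $H$ differentiates in the base so $H(\lambda)=0$), and $F(V(\lambda)) = (X+\lambda V)(-\sin\theta) = -\lambda\cos\theta = -\cos^2\theta$. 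Plugging into $\mathbb{K} = \pi^*K_g - H(\lambda) + \lambda^2 + F(V(\lambda)) = 0 + 0 + \cos^2\theta - \cos^2\theta = 0$, which gives the first assertion. By Theorem~\ref{theorem:non-positive-implies-no-conjugate} (or directly Theorem~\ref{theorem:collapse}(a)), the thermostat has no conjugate points, so the Green bundles are well-defined.

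Next I would identify the two special orbits. At $(x,\pi/2)$ we have $\lambda = \cos(\pi/2) = 0$ and $F = X$ at that point; moreover along the orbit through $(x,\pi/2)$ the angle stays constant because $\dot\theta = \lambda = 0$ there, so $\varphi_t(x,\pi/2) = (x + t e_2, \pi/2)$ remains on the circle $\{\theta = \pi/2\}$ for all $t$. The same holds for $(x,-\pi/2)$, where again $\lambda = 0$ and the orbit is $(x - te_2, -\pi/2)$. These are precisely the orbits on which the vertical derivative of $\lambda$ is $V(\lambda) = -\sin(\pm\pi/2) = \mp 1$, i.e. nonzero and of fixed sign. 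This is what makes them good test orbits.

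Then I would solve the Jacobi-type system \eqref{eq:flow-on-sigma} explicitly along these orbits using the gauge $p = V(\lambda)$, for which \eqref{eq:explicit-solutions} gives $x(t) = \exp\bigl(-\int_0^t V(\lambda)(\varphi_\tau(v))\,d\tau\bigr)x(0)$ and $y(t) = y(0) - \int_0^t x(\tau)\,d\tau$. Along the orbit through $(x,\pi/2)$ we have $V(\lambda) \equiv -1$, hence $x(t) = e^{t}x(0)$ and $y(t) = y(0) - (e^t - 1)x(0)$; along the orbit through $(x,-\pi/2)$ we have $V(\lambda) \equiv +1$, hence $x(t) = e^{-t}x(0)$ and $y(t) = y(0) - (1 - e^{-t})x(0)$. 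Now recall that a covector in $\mathbb{H}^*$ has $y = 0$, $x \neq 0$, and $G^*_s(v) = \lim_{t\to\infty} d\varphi_{-t}^{-\top}(\mathbb{H}^*(\varphi_t(v)))$, $G^*_u(v) = \lim_{t\to\infty} d\varphi_t^{-\top}(\mathbb{H}^*(\varphi_{-t}(v)))$. Concretely, $G^*_s(v)$ is spanned by the $\xi$ whose solution $(x,y)$ has $y(t)\to$ bounded (indeed the stable solution is the one staying bounded for $t\geq 0$, by Lemma~\ref{lemma:bounded_implication}), and similarly $G^*_u$ corresponds to solutions bounded for $t\leq 0$. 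For the orbit through $(x,\pi/2)$: the solution with $x(0) = x_0$, $y(0) = y_0$ has $y(t) = y_0 + x_0 - e^t x_0$, which is bounded as $t\to\infty$ iff $x_0 = 0$; that is the covertical direction, and since in the coordinates $\{\beta,\phi_p\} = \{\beta, \psi_\lambda - V(\lambda)\beta\} = \{\beta, \psi_\lambda + \beta\}$ at that point... I should instead read off in the $\{\beta,\psi_\lambda\}$ basis: $\xi = x\beta + y\psi_\lambda$ up to the gauge shift, so $x_0 = 0$ corresponds to $\R\psi_\lambda$, hence $G^*_u(x,\pi/2) = \R\psi_\lambda$ (careful with which limit is which — need to double-check stable vs. unstable by tracking $t\to-\infty$: for $t\to-\infty$, $y(t) = y_0 + x_0 - e^t x_0 \to y_0 + x_0$ is always bounded, so the bounded-for-$t\leq 0$ condition is automatic and the unstable bundle is recovered as the limit of $d\varphi_t^{-\top}\mathbb{H}^*(\varphi_{-t}(v))$, which sends $(x_0,0)$ at time $-t$ forward; solving, the limiting direction has a definite slope). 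I would carry out this bookkeeping carefully: parametrize $\mathbb{H}^*(\varphi_{\mp t}(v))$ by $(x(\mp t), 0)$, evolve to time $0$, and take $t\to\infty$; the ratio $x(0)/y(0)$ of the limit gives the slope of $G^*_{s/u}$, and one reads off the claimed spans $\R\psi_\lambda$ and $\R(\psi_\lambda - \beta)$. The $(x,-\pi/2)$ case is identical with the roles of $s$ and $u$ swapped because $V(\lambda)$ changes sign. Finally, the transversality $G^*_s(v)\cap G^*_u(v) = \{0\}$ at these points is immediate from $\R\psi_\lambda \neq \R(\psi_\lambda - \beta)$, establishing optimality of Theorem~\ref{theorem:collapse}(a).

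\textbf{Main obstacle.} The computations are all elementary, so the only real danger is a sign/orientation error in matching the abstract limiting definitions \eqref{eq:green-bundles-definition} of $G^*_{s}$ and $G^*_u$ — which involve $d\varphi_{-t}^{-\top}$ versus $d\varphi_t^{-\top}$ and the inverse-transpose cocycle — to the explicit solutions $(x(t),y(t))$ and the correct basis $\{\beta,\psi_\lambda\}$ versus $\{\beta,\phi_p\}$. I would pin this down by invoking Lemma~\ref{lemma:bounded_implication} (bounded for $t\geq 0$ $\Rightarrow$ stable) to unambiguously decide which explicit solution spans which bundle, rather than trusting the limit computation blind; then the slopes $0$ and $1$ fall out and the gauge shift $\psi_\lambda = \phi_p + V(\lambda)\beta$ with $V(\lambda) = \mp 1$ converts between representations.
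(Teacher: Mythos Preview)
Your plan is exactly the paper's: verify $\mathbb{K}=0$ by direct substitution, observe that the sets $\{\theta=\pm\pi/2\}$ are flow-invariant, solve \eqref{eq:flow-on-sigma} explicitly along those orbits in the gauge $p=V(\lambda)$ via \eqref{eq:explicit-solutions}, and read off $G^*_{s/u}$ using Lemma~\ref{lemma:bounded_implication}.

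The one concrete correction you need: Lemma~\ref{lemma:bounded_implication} tests boundedness of the \emph{damped} coordinate $z=y/m$, not of $y$ itself. Along $\theta=\pi/2$ one has $m(t)=e^{t/2}$, so although every $y(t)=y_0+x_0(1-e^{t})$ stays bounded as $t\to-\infty$ (the symptom you noticed and found puzzling), the relevant quantity is $z(t)=(y_0+x_0)e^{-t/2}-x_0e^{t/2}$, which is bounded for $t\le 0$ iff $y_0+x_0=0$ and bounded for $t\ge 0$ iff $x_0=0$. This cleanly separates the two bundles without any ambiguity. Once you work with $z$ rather than $y$, both the stable/unstable identification and the basis bookkeeping you flagged as the main obstacle resolve themselves mechanically (recall $\xi=x(0)\beta+y(0)\phi_p$ with $\phi_p=\psi_\lambda-V(\lambda)\beta$), and your fallback through the limiting definition \eqref{eq:green-bundles-definition} becomes unnecessary.
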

\begin{proof}
Note that $(V\lambda)(x, \theta)=-\sin(\theta)$.  Unraveling the definitions, we get
    $$\mathbb{K}=\pi^*K_g - H\lambda + \lambda^2+FV\lambda = \cos^2(\theta)-\cos^2(\theta)=0.$$
Observe that the two tori 
\begin{equation}\label{eq:invariant-tori}
\{(x,\theta)\in S\mathbb{T}^2\mid \theta=\pm \pi/2\}
\end{equation}
are flow-invariant. When $\theta = \pi/2$, the solutions to \eqref{eq:explicit-solutions} become 
$$x(t)=e^{t}x(0), \qquad y(t)=y(0)+x(0)(1-e^{t}).$$
Recall that we had set $p=V\lambda$, so $p(x, \pi/2)=-1$. Further note that $m(t)=e^{t/2}$. If we start with $x(0)= 0$ and $y(0)\neq 0$, which corresponds to a covector in $\R(\psi_\lambda-\beta)$, we get that $z(t)=y(0)e^{-t/2}$ is bounded for all $t\geq 0$. By Lemma \ref{lemma:bounded_implication}, it follows that $G^*_s(x, \pi/2)=\R(\psi_\lambda-\beta)$. On the other hand, if we start with $x(0)\neq 0$ and $y(0)=-x(0)$, which corresponds to a covector in $\R\psi_\lambda$, we get that $z(t)=-x(0)e^{t/2}$ is bounded for all $t\leq 0$. By Lemma \ref{lemma:bounded_implication}, it follows that $G^*_u(x, \pi/2)=\R\psi_\lambda$.

When $\theta=-\pi/2$, we instead have 
$$x(t)=e^{-t}x(0), \qquad y(t)=y(0)+x(0)(e^{-t}-1), \qquad m(t)=e^{-t/2},$$
so we can repeat the same arguments.
\end{proof}

\begin{remark}
This example is also interesting from another perspective. In the classical paper \cite{ballmann87}, the authors construct a surface without
conjugate points where the Green bundles are not continuous. They point out that the Green bundles are always continuous when $K_g \leq 0$. Their example is quite elaborate, but it shares key features with the previous system: our two invariant tori \eqref{eq:invariant-tori} play the role of their closed geodesics $\sigma_\pm$. The advantage of our thermostat example is that it is much easier to understand and visualize (although it is not purely geodesic). 

\end{remark}

To prove Theorem \ref{theorem:counterexample}, we provide the following family of examples.

\begin{figure}[h]
\centering 
\begin{subfigure}[t]
{0.5\textwidth}
\centering
\includegraphics[scale=0.4]{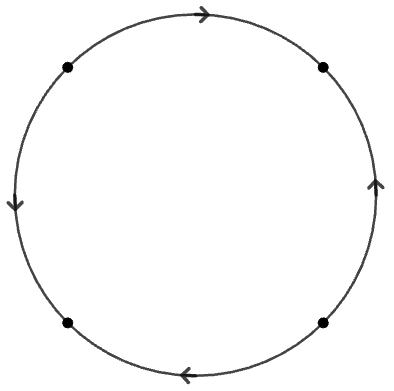}
\caption*{(a) Dynamics restricted to the fibers of $S\mathbb{T}^2$.}
\end{subfigure}%
\begin{subfigure}[t] 
{0.5\textwidth}
\centering
\includegraphics[scale=0.55]{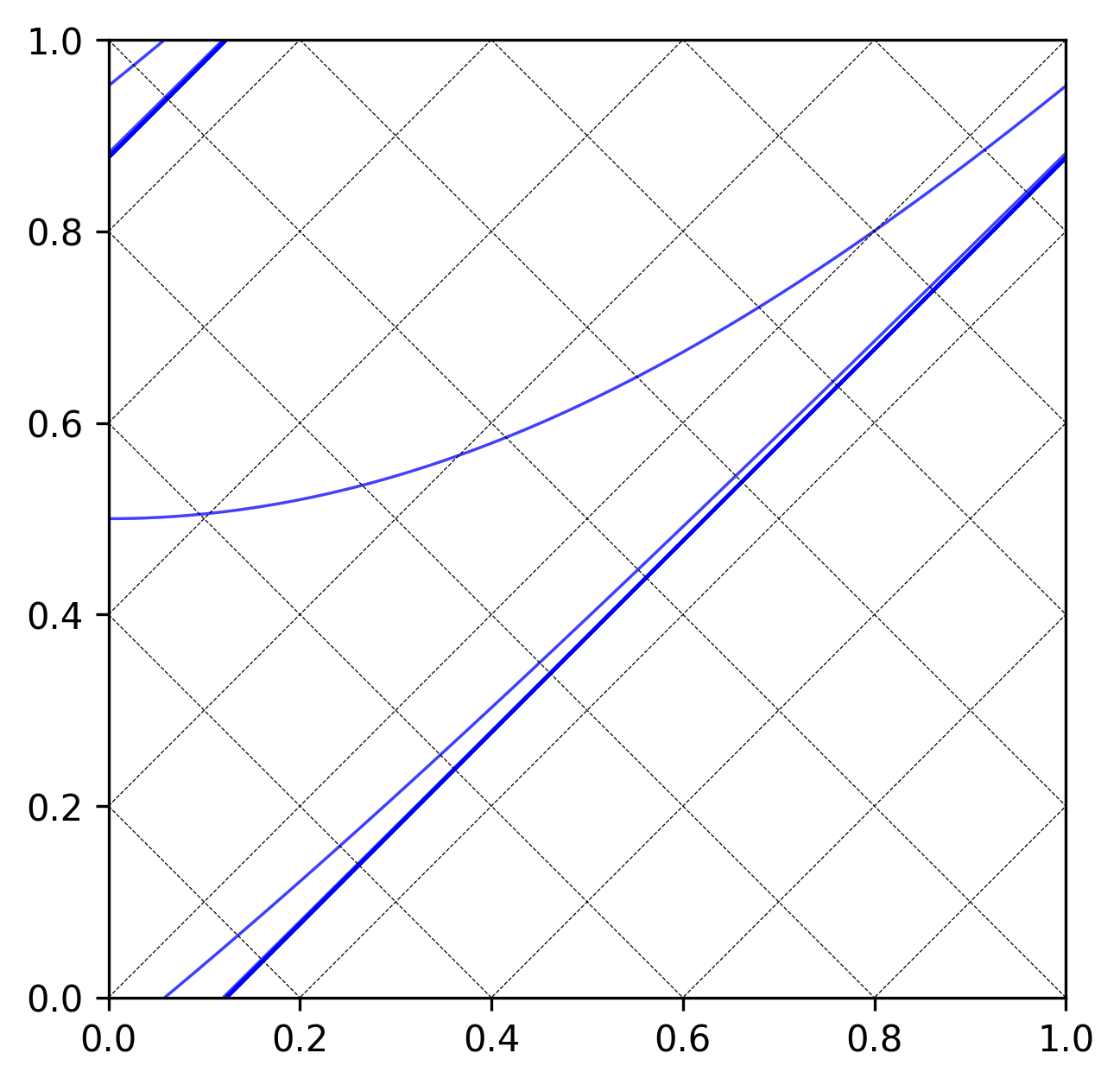}
\caption*{(b) Evolution of a single trajectory on $\mathbb{T}^2$.}
\end{subfigure}
\caption{Illustration when $m=2$.}
\label{figure:m-2}
\end{figure}

\begin{lemma}\label{lem:example-splitting}
Let $g$ be a Riemannian metric on $\mathbb{T}^2$ and define $\lambda\in \mathcal{C}^\infty(S\mathbb{T}^2, \R)$ by $$\lambda(x,\theta)\coloneqq h(x)+\cos(m\theta),$$ where $h\in \mathcal{C}^\infty(\mathbb{T}^2, \R)$ and $m$ is an integer $\geq 2$. If 
$$\Vert \partial h\Vert_{\infty}+h^2+(m-2)|h|<m-1-K_g,$$ then the thermostat $(\mathbb{T}^2, g, \lambda)$ has no conjugate points and it is projectively Anosov.

\end{lemma}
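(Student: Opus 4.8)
The plan is to exhibit a single explicit gauge $p$ for which $\kappa_p < 0$, so that Theorem~\ref{theorem:non-positive-implies-no-conjugate} gives the absence of conjugate points, and then to produce a second gauge whose associated cocycle is dominated, invoking \cite[Theorem 3.7]{mettler2019holomorphic} (or Theorem~\ref{theorem:partial-converse} run in reverse) for the dominated splitting. The natural guess, given that the only genuinely new Fourier mode of $\lambda$ is $\cos(m\theta)$ with $m\geq 2$, is to try $p$ equal to a small multiple of $V(\lambda)$ or, better, a constant multiple of $\sin(m\theta)$ adjusted by the $x$-dependence coming from $h$. Writing $\lambda(x,\theta)=h(x)+\cos(m\theta)$ we compute $V(\lambda)=-m\sin(m\theta)$, so $V(\lambda)$ itself is already $O(m)$; this suggests instead choosing $p=V(\lambda)/m = -\sin(m\theta)$, which is $O(1)$, and then estimating $\kappa_p$ term by term.

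The key computation is to expand
\[
\kappa_p = \pi^*K_g - H(\lambda) + \lambda^2 + F(p) + p(p-V(\lambda))
\]
with $p=-\sin(m\theta)$. Here $H=[V,X]$ acts on the $x$-dependent part $h(x)$ by a horizontal derivative, contributing a term bounded by $\Vert\partial h\Vert_\infty$; the term $\lambda^2 = h^2 + 2h\cos(m\theta)+\cos^2(m\theta)$ contributes $h^2$ plus oscillatory pieces; $F(p)=X(p)+\lambda V(p) = X(-\sin(m\theta)) + \lambda\cdot(-m\cos(m\theta))$, and since $X$ differentiates $\theta$ against the connection one gets a controlled contribution plus the dominant $-m\lambda\cos(m\theta)$; finally $p(p-V(\lambda)) = (-\sin(m\theta))(-\sin(m\theta)+m\sin(m\theta)) = (m-1)\sin^2(m\theta)$. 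The idea is that the leading non-oscillatory behaviour, after combining $-H(\lambda)+F(p)+p(p-V(\lambda))$ and using $\cos^2+\sin^2=1$, collapses to something of the form $-(m-1) + (\text{terms controlled by } h)$, and the hypothesis
\[
\Vert\partial h\Vert_\infty + h^2 + (m-2)|h| < m-1-K_g
\]
is precisely engineered so that $\pi^*K_g - (m-1) + \Vert\partial h\Vert_\infty + h^2 + (m-2)|h| < 0$ pointwise, giving $\kappa_p<0$. I would organize this as: (i) record the formulas $V(\lambda)$, $X(\theta)$, $H$ acting on functions of $(x,\theta)$ for the flat-frame conventions on $S\mathbb{T}^2$ (citing \S\ref{subsection:characteristic}); (ii) plug $p=-\sin(m\theta)$ in and group terms into a ``main part'' $K_g-(m-1)$ and an ``error'' bounded in absolute value by $\Vert\partial h\Vert_\infty+h^2+(m-2)|h|$; (iii) conclude $\kappa_p<0$ and apply Theorem~\ref{theorem:non-positive-implies-no-conjugate}.

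For the dominated splitting, since the thermostat now has no conjugate points, Theorem~\ref{theorem:green-bundles-definition} produces the Green bundles $G^*_{s/u}$, and by Theorem~\ref{theorem:characterization-dominated-splitting} it suffices to show $G^*_s(v)\cap G^*_u(v)=\{0\}$ everywhere — equivalently, $r^s(v)\neq r^u(v)$ for all $v$. Here I expect the cleanest route is not to analyze $r^{s/u}$ directly but to observe that the same gauge $p$ that makes $\kappa_p<0$ already forces transversality: from the proof of Theorem~\ref{theorem:partial-converse} run backwards, $\kappa_p<0$ for a continuous $p$ smooth along the flow is exactly the input to \cite[Theorem 3.7]{mettler2019holomorphic}, which yields a dominated splitting. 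So the second half of the lemma is immediate once $\kappa_p<0$ is established with $p$ continuous — and $p=-\sin(m\theta)$ is smooth, hence certainly continuous and smooth along the flow.

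\textbf{Main obstacle.} The delicate point is entirely in step (ii): verifying that all the oscillatory cross-terms (the $\cos(m\theta)$, $\sin(m\theta)$, $h\cos(m\theta)$ pieces, and whatever $X$ and $H$ produce when hitting $\sin(m\theta)$ and $h(x)$ in the non-flat metric) really do assemble, after using $p(p-V(\lambda))=(m-1)\sin^2(m\theta)$ and trigonometric identities, into a main term $\pi^*K_g-(m-1)$ plus an error bounded pointwise by exactly $\Vert\partial h\Vert_\infty+h^2+(m-2)|h|$. Getting the constant in front of $|h|$ to be $m-2$ rather than something larger is the crux, and likely requires choosing $p$ with an $h$-dependent correction (e.g.\ $p=-\sin(m\theta)$ works if one is careful, or one may need $p$ of the form $a\sin(m\theta)+(\text{function of }h)$ with $a$ tuned) and using the estimate $|2h\cos(m\theta)|\le h^2+\cos^2(m\theta)$ or a sharper grouping. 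I would handle the non-flat-metric contributions by noting $\pi^*K_g$ enters only through the $[X,H]$ structure equation and contributes exactly the $\pi^*K_g$ term plus terms already counted, so the metric enters the final inequality only via $K_g$ on the right-hand side, matching the stated hypothesis.
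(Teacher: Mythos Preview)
Your approach is exactly the paper's: take $p=V(\lambda)/m=-\sin(m\theta)$, expand $\kappa_p$, and invoke Theorem~\ref{theorem:non-positive-implies-no-conjugate} together with \cite[Theorem~3.7]{mettler2019holomorphic} once $\kappa_p<0$. The only slip is a sign in your computation of $p(p-V(\lambda))$: one has $p(p-V(\lambda))=(-\sin(m\theta))\bigl((m-1)\sin(m\theta)\bigr)=-(m-1)\sin^2(m\theta)$, not $+(m-1)\sin^2(m\theta)$. With the correct sign the purely trigonometric pieces combine as $\cos^2(m\theta)-m\cos^2(m\theta)-(m-1)\sin^2(m\theta)=1-m$, and the only surviving cross-term is $(2-m)\lambda_0\cos(m\theta)$, bounded in absolute value by $(m-2)|h|$; this is precisely where the constant $m-2$ comes from. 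So no $h$-dependent correction to $p$ is needed, and your ``main obstacle'' dissolves once the sign is fixed.
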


\begin{proof}
    Let $p\coloneqq V\lambda/m$, that is, $p(x,\theta)=-\sin(m\theta)$, and let us write $\lambda_0\coloneqq \pi^*h$. Then,
        \begin{equation*}
        \begin{alignedat}{1}
    \kappa_p &= \pi^*K_g-H\lambda+\lambda^2+Fp+p(p-V\lambda)\\
    &=\pi^*K_g-H\lambda_0 + \lambda_0^2+(2-m)\lambda_0\cos(m\theta)+1-m.
        \end{alignedat}
    \end{equation*}
Therefore, since $m\geq 2$, we obtain the inequality
    \begin{equation*}
        \begin{alignedat}{1}
    \kappa_p &\leq \pi^* K_g +\Vert H\lambda_0\Vert_\infty+\lambda_0^2+(m-2)|\lambda_0|+1-m <0.
        \end{alignedat}
    \end{equation*}
The result follows from Theorem \ref{theorem:non-positive-implies-no-conjugate} and \cite[Theorem 3.7]{mettler19}. \end{proof}

By \cite{plante72}, we know that $S\mathbb{T}^2$ cannot admit an Anosov flow and, hence, this is an example of a projectively Anosov thermostat which is not Anosov. It is also easy to see that the non-wandering set is $\Omega=\{(x, \theta)\in S\mathbb{T}^2\mid \cos(m\theta)=0, \, \sin(m\theta)=1\}$. See Figure \ref{figure:m-2}.

\bibliographystyle{alpha}
\bibliography{references}

@article{contreras99,
title={{Convex Hamiltonians without conjugate points}},
author={Contreras, Gonzalo and Iturriaga, Renato},
year={1999},
journal={Ergodic Theory and Dynamical Systems},
volume={19},
number={4},
pages={901-952}
}

@article{mane97,
title={{Lagrangian flows: The dynamics of globally minimizing orbits}},
author={Ma{\~n}{\'e}, Ricardo},
year={1997},
journal={Boletim da Sociedade Brasileira de Matemática},
volume={28},
number={2},
pages={141-153}
}

@article{burns02,
title={{Anosov magnetic flows, critical values and topological entropy}},
author={Burns, Keith and Paternain, Gabriel P.},
year={2002},
journal={Nonlinearity},
volume = {15},
number = {2},
pages = {281-314}
}

@article{contreras04,
title={{Periodic orbits for exact magnetic flows on surfaces}},
author={Contreras, Gonzalo and Macarini, Leonardo and Paternain, Gabriel P.},
year={2004},
journal={International Mathematics Research Notices},
volume={2004},
number={8},
pages={361-387}
}

@article{contreras97,
title={{Lagrangian flows: The dynamics of globally minimizing orbits - II}},
author={Contreras, Gonzalo and Delgado, Jorge and Iturriaga, Renato},
year={1997},
journal={Bulletin of the Brazilian Mathematical Society},
volume={28},
number={2},
pages={155-196}
}

@article{cieliebak10,
title={{Symplectic topology of Mañé's critical values}},
author={Cieliebak, Kai and Frauenfelder, Urs and Paternain, Gabriel P.},
year={2010},
journal={Geometry \& Topology},
volume={14},
number={3},
pages={1765-1870}
}

@article{dairbekov07,
  title={{Entropy production in Gaussian thermostats}},
  author={Dairbekov, Nurlan S. and Paternain, Gabriel P.},
  journal={Communications in Mathematical Physics},
  volume={269},
  number={2},
  pages={533-543},
  year={2007}
}

@article{wojtkowski00,
  title={{Magnetic flows and Gaussian thermostats on manifolds of negative curvature}},
  author={Wojtkowski, Maciej P.},
  journal={Fundamenta Mathematicae},
  volume={163},
  number={2},
  pages={177-191},
  year={2000}
}

@article{wojtkowski00b,
  title={{W-flows on Weyl manifolds and Gaussian thermostats}},
  author={Wojtkowski, Maciej P.},
  journal={Journal de Mathématiques Pures et Appliquées},
  volume={79},
  number={10},
  pages={953-974},
  year={2000}
}

@inproceedings{wojtkowski02,
  author={Wojtkowski, Maciej P.},
  title={{Weyl manifolds and Gaussian thermostats}},
  booktitle={Proceedings of the International Congress of Mathematicians},
  editor={Li, Daqian},
  volume={3},
  pages={511--523},
  publisher={Higher Education Press},
  address={Beijing},
  year={2002}
}

@article{gallavotti97,
title={{SRB states and nonequilibrium statistical mechanics close to equilibrium}},
author={Gallavotti, Giovanni and Ruelle, David},
year={1997},
journal={Communications in Mathematical Physics},
volume={190},
number={2},
pages={279-285}
}

@article{gallavotti99,
title={New methods in nonequilibrium gases and fluids},
author={Gallavotti, Giovanni},
year={1999},
journal={Open Systems \& Information Dynamics},
volume={6},
number={2},
pages={101–136}
}

@article{ruelle99,
title={Smooth dynamics and new theoretical ideas in nonequilibrium statistical mechanics},
author={Ruelle, David},
year={1999},
journal={Journal of Statistical Physics},
volume={95},
number={1-2},
pages={393–468}
}

@article{paternain07,
title={{Regularity of weak foliations for thermostats}},
author={Paternain, Gabriel P.},
year={2007},
journal={Nonlinearity},
volume={20},
number={1},
pages={87-104}
}

@article{ghys92,
title={{Déformations de flots d'Anosov et de groupes fuchsiens}},
author={Ghys, Étienne},
journal={Annales de l'Institut Fourier},
year={1992},
number={1-2},
volume={42},
pages={209-247}
}

@article{ghys93,
title={{Rigidité différentiable des groupes fuchsiens}},
author={Ghys, Étienne},
journal={Publications mathématiques de l’IHÉS},
year={1993},
volume={78},
pages={163-185}
}

@article{mettler20,
title={{Convex projective surfaces with compatible Weyl connection are hyperbolic}},
author={Mettler, Thomas and Gabriel P. Paternain},
journal={Analysis \& PDE},
year={2020},
volume={13},
number={4},
pages={1073-1097}
}

@article{labourie07,
title={{Flat projective structures on surfaces and cubic holomorphic differentials}},
author={Labourie, François},
journal={Pure and Applied Mathematics Quarterly},
year={2007},
volume={3},
number={4},
pages={1057-1099}
}

@article{przytycki08,
title={{Gaussian thermostats as geodesic flows of nonsymmetric linear connections}},
author={Przytycki, Piotr and Wojtkowski, Maciej P.},
journal={Communications in Mathematical Physics},
volume={277},
number={3},
year={2008},
pages={759–769}
}

@book{hoover86,
title={{Molecular Dynamics}},
author={Hoover, William G.},
year={1986},
publisher={Springer},
series={Lecture Notes in Physics},
address={Berlin},
volume={258},
}

@article{cekic26,
title={{Quasi-Fuchsian flows and the coupled vortex equations}}, 
author={Cekić, Mihajlo and Paternain, Gabriel P.},
year={2026},
journal={Bulletin of the London Mathematical Society, \textup{to appear.} Preprint, arXiv:2501.10591},
eprint={2501.10591},
archivePrefix={arXiv},
url={https://arxiv.org/abs/2501.10591}
}

@article{ginzburg50,
title={On the theory of superconductivity},
author={Ginzburg, Vitaly L. and Landau, Lev D.},
journal={Zhurnal Éksperimental'noĭ i Teoreticheskoĭ Fiziki},
volume={20},
year={1950},
pages={1064-1082}
}

@book{jaffe80,
title = {{Vortices and Monopoles: Structure of Static Gauge Theories}},
author={Jaffe, Arthur and Taubes, Clifford},
year={1980},
publisher={Birkhäuser},
series={Progress in Physics},
address={Boston},
volume={2}
}

@article{dumas15,
title = {{Polynomial cubic differentials and convex polygons in the projective plane}},
author={Dumas, David and Wolf, Michael},
year={2015},
journal={Geometric and Functional Analysis},
volume={25},
number={6},
pages={1734-1798 }
}

@incollection{wang91,
title = {{Some examples of complete hyperbolic affine 2-spheres in $\mathbb{R}^3$}},
author={Wang, Changping},
editor={Ferus, Dirk
and Pinkall, Ulrich
and Simon, Udo
and Wegner, Berd},
year={1991},
series={Lecture Notes in Mathematics},
booktitle={Global Differential Geometry and Global Analysis},
volume={1481},
pages={271-280},
address={Berlin},
publisher={Springer}
}

@article{bradlow91,
title={{Special metrics and stability for holomorphic bundles with global sections}},
author={Bradlow, Steven B.},
year={1991},
journal={Journal of Differential Geometry},
volume={33},
number={1},
pages={169-213}
}

@article{garcia-prada94,
title={{A direct existence proof for the vortex equations over a compact Riemann surface}},
author={{García-Prada}, Oscar},
year={1994},
journal={Bulletin of the London Mathematical Society},
volume={26},
number={1},
pages={88-96}
}

@article{witten07,
title={{From superconductors and four-manifolds to weak interactions}},
author={Witten, Edward},
year={2007},
journal={Bulletin of the American Mathematical Society},
volume={44},
number={3},
pages={361-391}
}

@article{noguchi87,
title={{Yang–Mills–Higgs theory on a compact Riemann surface}},
author={Noguchi, Mitsunori},
year={1987},
journal={Journal of Mathematical Physics},
volume={28},
number={10},
pages={2343–2346}
}

@article{ghys84,
title={{Flots d'Anosov sur les 3-variétés fibrées en cercles}},
author={Ghys, Étienne},
journal={Ergodic Theory and Dynamical Systems},
year={1984},
pages={67-80},
volume={4},
number={1}
}

@article{assylbekov14,
  title={Hopf type rigidity for thermostats},
  author={Assylbekov, Yernat M. and Dairbekov, Nurlan S.},
  journal={Ergodic Theory and Dynamical Systems},
  volume={34},
  number={6},
  pages={1761-1769},
  year={2014}
}

@article{eberlein73,
title={{When is a geodesic flow of Anosov type? I}},
author={Eberlein, Patrick},
journal={Journal of Differential Geometry},
year={1973},
volume={8},
number={3},
pages={437-463}
}

@article{mettler19,
title={{Holomorphic differentials, thermostats and Anosov flows}},
author={Mettler, Thomas and Paternain, Gabriel P.},
year={2019},
journal={Mathematische Annalen},
volume={373},
pages={553-580},
number={1}
}

@article{ballmann87,
title={{On surfaces with no conjugate points}},
author={Blallmann, Werner and Brin, Michael and Burns, Keith},
year={1987},
journal={Journal of Differential Geometry},
volume={25},
pages={249-273}
}

@article{freire82,
  title={{On the entropy of the geodesic flow in manifolds without conjugate points}},
  author={Freire, Alexandre and Mañé, Ricardo},
  journal={Inventiones mathematicae},
  volume={69},
  number={3},
  pages={375-392},
  year={1982}
}

@article{green54,
  title={{Surfaces without conjugate points}},
  author={Green, Leon W.},
  journal={Transactions of the American Mathematical society},
  volume={76},
  number={3},
  pages={529-546},
  year={1954}
}

@article{klingenberg74,
    title={{Riemannian manifolds with geodesic flow of Anosov type}},
    author={Klingenberg, Wilhelm},
    year={1974},
    journal={Annals of Mathematics},
    pages={1-13},
    volume={99},
    number={1}
}

@book{barreira02,
  title={{Lyapunov Exponents and Smooth Ergodic Theory}},
  author={Barreira, Luís and Pesin, Yakov B.},
  volume={23},
  year={2002},
  series={University Lecture Series},
  publisher={American Mathematical Society},
  address={Providence}
}

@article{dairbekov07b,
title={{The boundary rigidity problem in the presence of a magnetic field}},
author={Dairbekov, Nurlan S. and Paternain, Gabriel P. and Stefanov, Plamen and Uhlmann, Gunther},
volume={216},
number={2},
year={2007},
journal={Advances in Mathematics},
pages={535--609}
}

@article{osedelets68,
  title={{A multiplicative ergodic theorem. Characteristic Ljapunov, exponents of dynamical systems}},
  author={Osedelets, Valeriy I.},
  journal={Trudy Moskovskogo Matematicheskogo Obshchestva},
  volume={19},
  pages={179-210},
  year={1968}
}

@book{fisher19,
  title={{Hyperbolic Flows}},
  author={Fisher, Todd and Hasselblatt, Boris},
  volume={25},
  series={Zurich Lectures in Advanced Mathematics},
  publisher={European Mathematical Society},
  year={2019},
  address={Berlin}
}

@article{hopf48,
    title={{Closed surfaces without conjugate points}},
    author={Hopf, Eberhard},
    volume={34},
    number={2},
    pages={47-51},
    year={1948},
    journal={Proceedings of the National Academy of Sciences of the United States of America}
}

@article{plante72,
  title={{Anosov flows and the fundamental group}},
  author={Plante, Joseph F. and Thurston, William P.},
  journal={Topology},
  volume={11},
  number={2},
  pages={147-150},
  year={1972}
}

@article{anosov67,
title={{Geodesic flows on closed Riemann manifolds with negative curvature}},
author={Anosov, Dmitri V.},
year={1967},
journal={Trudy Matematicheskogo Instituta imeni V. A. Steklova},
volume={90},
pages={3-210}
}

@article{assenza25,
title={{Magnetic flatness and E. Hopf’s theorem for magnetic systems}},
author={Assenza, Valerio and {Marshall Reber}, James and Terek, Ivo},
year={2025},
journal={Communications in Mathematical Physics},
volume={406},
number={2}
}

@article{arroyo03,
  title={{Homoclinic bifurcations and uniform hyperbolicity for three-dimensional flows}},
  author={Arroyo, Aubin and {Rodríguez Hertz}, Federico},
  journal={Annales de l'Institut Henri Poincaré C, Analyse non linéaire},
  volume={20},
  number={5},
  pages={805-841},
  year={2003}
}

@article{ruelle78,
  title={{An inequality for the entropy of differentiable maps}},
  author={Ruelle, David},
  journal={Boletim da Sociedade Brasileira de Matemática},
  volume={9},
  number={1},
  pages={83-87},
  year={1978}
}

@article{jane09,
title={{On the injectivity of the X-ray transform for Anosov thermostats}},
author={Jane, Dan and Paternain, Gabriel P.},
year={2009},
journal={Discrete and Continuous Dynamical Systems},
volume={24},
number={2},
pages={471–487}
}

@article{merry11,
title={{Inverse problems in geometry and dynamics}},
journal={Lecture notes, \textup{University of Cambridge}},
author={Merry, Will J. and Paternain, Gabriel P.},
year={2011}
}

@book{araujo10,
title={{Three-Dimensional Flows}},
author={Araújo, Vítor and Pacifico, Maria José},
year={2010},
series={A Series of Modern Surveys in Mathematics},
volume={53},
publisher={Springer},
address={Berlin}
}

@article{blair98,
title={{Conformally Anosov flows in contact metric geometry}},
author={Blair, David E. and Peronne, Domenico},
year={1998},
journal={Balkan Journal of Geometry and Its Applications},
volume={3},
number={2},
pages={33--46}
}

@book{eliashberg98,
title={{Confoliations}},
author={Eliashberg, Yakov M. and Thurston, William P.},
year={1998},
series={University Lecture Series},
volume={13},
publisher={American Mathematical Society},
address={Providence}
}

@incollection{donnay03,
title = {{Anosov geodesic flows for embedded surfaces}},
author={Donnay, Victor J. and Pugh, Charles C.},
year={2003},
editor = {de Melo, Wellington and Viana, Marcelo and Yoccoz, Jean-Christophe},
series={Astérisque},
booktitle={Geometric Methods in Dynamics (II): Volume in Honor of Jacob Palis},
volume={287},
pages={61-69},
address={Paris},
publisher={Société Mathématique de France}
}

@article{donnay18,
title={{A new proof of the existence of embedded surfaces with Anosov geodesic flow}},
author={Donnay, Victor J. and Visscher, Daniel},
year={2018},
journal={Regular and Chaotic Dynamics},
volume={23},
number={6},
pages={685--694}
}

\end{document}